\documentclass[11pt]{article}
\usepackage[left=1in,right=1in,top=1in,bottom=1in]{geometry} 
\usepackage{graphicx} 

\usepackage[toc]{appendix}
\usepackage[normalem]{ulem} 
\usepackage{amsfonts,amsmath,amssymb,amsthm}

\usepackage{verbatim}
\usepackage{subcaption}
\usepackage{url}
 \usepackage{pgfplots}
 \pgfplotsset{compat=1.18}
\usepackage{hyperref}
\usepackage{orcidlink}
\usepackage{cleveref}
\usepackage[dvipsnames]{xcolor}

\usepackage[upint]{newtxmath} 

\definecolor{NiceBlue}{HTML}{0099E6}
\definecolor{NiceGreen}{HTML}{3BB300}
\definecolor{NiceGray}{HTML}{818589}
\definecolor{RED}{HTML}{FF0000}
\definecolor{DarkRed}{HTML}{8B0000}
\definecolor{VintageGreen}{HTML}{3C785B}
\definecolor{VintageRed}{HTML}{880B04}

\hypersetup{
	colorlinks=true,
	linkcolor=Sepia,
	filecolor=magenta,      
	urlcolor=Blue,
	citecolor=BlueViolet,
}
 
\newtheorem{theorem}{Theorem}
\newtheorem{lemma}[theorem]{Lemma} 
\newtheorem{proposition}[theorem]{Proposition} 
\newtheorem{remark}[theorem]{Remark}
\newtheorem{corollary}[theorem]{Corollary}

\theoremstyle{definition}

\newcommand{\EE}{\mathbb{E}}

\newcommand{\PP}{\mathbb{P}}
\newcommand{\RR}{\mathbb{R}}

\newcommand{\R}{\mathbb{R}}

\newcommand{\dint}{\mathop{}\!\mathrm{d}}

\newcommand{\1}{\mathbf{1}} 
\newcommand{\indic}[1]{\operatorname{\1}_{#1}}

\title{Wasserstein stability of the zero cell of a Poisson hyperplane tessellation under directional perturbations}
\author{Gilles Bonnet\,\orcidlink{0000-0003-4161-4124}
\and
Eliza O’Reilly\, \orcidlink{0000-0002-2646-292X}
\and
Bharath {Roy Choudhury}\, \orcidlink{0009-0008-9603-8631}
}
\date{}
\begin{document}

\maketitle

\begin{abstract}
A stationary Poisson hyperplane process in \(\mathbb{R}^d\) is characterized by an intensity parameter and an even probability measure on the unit sphere called the directional distribution.
In this work, we investigate the stability of the zero cell, i.e., the random convex polytope of the induced hyperplane tessellation containing the origin, under perturbations of the directional distribution. Our results provide quantitative bounds establishing local H\"{o}lder continuity of the distribution of the zero cell with respect to Wasserstein metrics on the space of random convex bodies and probability distributions on the unit sphere.
As an application, we establish stability bounds for density estimators constructed from Poisson hyperplane tessellations.
In particular, we bound the expected total variation distance between the zero-cell-based estimated probability measures in terms of the Wasserstein distance between their directional distributions.
\end{abstract}

\textbf{Keywords:} 
Density estimation,
Hölder regularity,
Poisson hyperplane process,
Wasserstein metric,
Zero cell


\bigskip

\section{Introduction}

Random tessellations, or partitions of $\RR^d$ into convex polyhedral cells, appear in a wide variety of mathematical and statistical settings. In stochastic geometry, the random collection of cells is modeled as a point process on the space of convex bodies. One of the most studied random tessellation models in this literature is generated by a Poisson hyperplane process.  The distribution of a Poisson hyperplane process is determined by a locally finite measure $\Theta$, called the \emph{intensity measure}, on the product space
\(S^{d-1}\) $\times \mathbb{R}$ that parametrizes hyperplanes $H(u,t) := \{ x\in \RR^d : \langle x, u \rangle = t\}$.
A natural question arising from modeling applications is how the distribution of quantities related to the Poisson hyperplane process changes with respect to perturbations of this measure $\Theta$. 

In this work, we investigate this type of regularity in regard to the distribution of the \emph{zero cell} of the tessellation, i.e., the cell of the partition that contains the origin. This random convex polytope has been well-studied in the stationary setting, with many existing results on the distribution of its shape, size, and facial structure, and how these properties depend on the choice of intensity measure (see \cite[Chapter 6]{hugPoissonHyperplaneTessellations2024} and the references therein).
The underlying Poisson assumption on the hyperplanes also leads to a straightforward argument that the zero cell is weakly continuous with respect to the intensity measure (see Section \ref{sec:continuity}). However, this argument does not provide bounds that quantify how the zero cell changes under a perturbation of the intensity measure with respect to a metric on the space of convex bodies.

Motivation for this study comes from statistical learning applications. It has been observed that the geometry of this zero cell controls the bias when employing a Poisson hyperplane process (or a STIT tessellation \cite{Nagel2005}) to build a random partition estimator for nonparametric density estimation and regression \cite{OReillyTran2021, OReillyTran2021minimax}. When employing this model in this and other settings, one may want to estimate this intensity measure $\Theta$ using an available dataset to best fit the model to the underlying data distribution. To ensure stability of the model output with respect to errors in this estimation, one must ensure that the distribution of the tessellation does not change drastically with small perturbations of this parameter.

\subsection{Our Contributions and Outline}

This paper focuses on Poisson hyperplane tessellations that are \emph{stationary}, meaning that the distribution of the model is invariant under translations in $\RR^d$. 
In this setting, the intensity measure satisfies $\Theta = (\phi \otimes \gamma \lambda) \circ \mathfrak{q}^{-1}$, where $\lambda$ is the one-dimensional Lebesgue measure, \(\mathfrak{q}\) is defined by \eqref{eq:quotientHyperplaneMap}, $\gamma > 0$, and $\phi$ is an even probability measure on the unit sphere called a \emph{directional distribution}. 
We establish quantitative bounds that control how the distribution of the zero cell changes with respect to perturbations of the directional distribution.   

Such results require a choice of metric on the space of probability distributions on both the space of convex bodies and on the unit sphere. We consider the Wasserstein distance in both cases and we equip the space of convex bodies with various metrics, including the radial $p$-metric $\rho_p$ as defined in \eqref{eq:radial_metric} and the symmetric-difference volume metric $\Delta$ as defined in \eqref{e:symdiff_metric_def}.
The unit sphere is equipped with the natural geodesic metric ``geo'' defined in \eqref{eq:geo}.

Our main result (see Theorem \ref{thm:Wass_p_bnd_new}) is as follows.

\begin{theorem}
Let $Z_0^{(1)}$ and $Z_0^{(2)}$ be the zero cells of stationary Poisson hyperplane tessellations with intensity $\gamma$ and directional distributions $\phi_1$ and $\phi_2$, respectively. Then, for all $\phi_2$ close enough to $\phi_1$ in Wasserstein distance, 
\[W_{\operatorname{dist}}(Z_0^{(1)}, Z_0^{(2)}) 
\leq C_{\operatorname{dist}}(\phi_1) \, \gamma^{-\alpha(\operatorname{dist})} \, W_{\mathrm{geo}}(\phi_1,\phi_2)^{\beta(\operatorname{dist})} ,\]
where $\operatorname{dist}$ is either the radial metric $\rho_p$ with $p\in[1,\infty)$ or the symmetric-difference volume metric $\Delta$ and where $\alpha(\rho_p)=1$, $\beta(\rho_p) = \frac{1}{p}$, $\alpha(\Delta)=d$ and $\beta(\Delta)=\frac{1}{d}$.
\end{theorem}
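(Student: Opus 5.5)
Fix an optimal (or near-optimal) coupling $\pi$ of $\phi_1$ and $\phi_2$ on $S^{d-1}\times S^{d-1}$ for $W_{\mathrm{geo}}$. We may take $\pi$ symmetric under $(u,v)\mapsto(-u,-v)$, since both measures are even. From $\pi$ we build a coupling of the two hyperplane processes on one probability space. Take a Poisson process on $S^{d-1}\times S^{d-1}\times\RR$ with intensity $\pi\otimes\gamma\lambda$. Each atom $(u,v,t)$ produces the hyperplane $H(u,t)$ in the first process and $H(v,t)$ in the second. By the mapping theorem the marginals are stationary Poisson hyperplane processes with directional distributions $\phi_1$ and $\phi_2$ and intensity $\gamma$. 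The zero cells $Z_0^{(1)},Z_0^{(2)}$ built from the same atoms are then coupled, and
\[
W_{\operatorname{dist}}(Z_0^{(1)},Z_0^{(2)})^{q}\le \EE\bigl[\operatorname{dist}(Z_0^{(1)},Z_0^{(2)})^{q}\bigr]
\]
for the relevant exponent $q$. So the whole task is to bound this expectation.

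Next we work in radial coordinates. Each cell is $Z_0^{(i)}=\bigcap\{x:\langle x,u_j\rangle\le |t_j|\}$, after flipping signs, and has radial function $r_i(w)=\min_j |t_j|/\langle w,u_j\rangle_+$. For a fixed direction $w$ we compare $r_1(w)$ and $r_2(w)$. Replacing $u$ by $v$ with $\mathrm{geo}(u,v)=\theta$ moves the distance at which the ray $w$ meets the hyperplane by a relative amount controlled by $\theta$ divided by $\langle w,u\rangle$. The natural estimate is then $|r_1(w)-r_2(w)|\lesssim r_{\max}(w)\,\max_j \theta_j/\langle w,u_j\rangle$, with the maximum taken over the hyperplanes that matter near the boundary point.

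We then take expectations using Mecke/Slivnyak. The radial function satisfies $\PP(r_1(w)>s)=\exp(-2\gamma s\int\langle w,u\rangle_+\,\phi_1(du))$, so scaling gives $r\sim\gamma^{-1}$. This produces the factor $\gamma^{-1}$ for $\rho_p$, and $\gamma^{-d}$ for $\Delta=\int(r_1^d-r_2^d)\,dw/d$ type quantities.

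To make the estimate robust we truncate. Split atoms into those with $\theta\le\varepsilon$ and those with $\theta>\varepsilon$.
\begin{itemize}
\item Small perturbations give a deterministic relative change of order $\varepsilon$, up to the problematic region where $\langle w,u\rangle$ is small.
\item On the event that a large-perturbation atom hits the cell, we use only the crude bound $\operatorname{dist}\le$ diameter-type quantities, which have moments of order $\gamma^{-1}$.
\end{itemize}
The expected number of large-perturbation hyperplanes meeting a ball of radius $R$ is at most $2\gamma R\,\pi(\theta>\varepsilon)\le 2\gamma R\,W_{\mathrm{geo}}/\varepsilon$ by Markov. Integrating the resulting bound over $w$ and optimizing $\varepsilon$ against $W_{\mathrm{geo}}(\phi_1,\phi_2)$ gives a power of $W_{\mathrm{geo}}$. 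Raising the $L^p$ radial bound to power $1/p$ yields the exponent $\beta(\rho_p)=1/p$. For $\Delta$, symmetric-difference volume is controlled by $\int|r_1-r_2|\,r^{d-1}$, and the Hölder exponent $1/d$ comes from the same truncation balanced against the $d$-th moment scaling.

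The main obstacle is the grazing regime, where $\langle w,u\rangle$ is near $0$ and a small angular change produces a huge change in $r(w)$. I would handle it by not bounding pointwise in $w$. Instead I would bound the measure of the set of directions where the supporting facet changes, which is a region of angular width $\theta$ around facet boundaries. This makes the radial difference at most the circumradius on a set of measure $O(\theta)$. Here the circumradius moment bounds for the zero cell, whose tails depend on $\phi_1$ and yield $C_{\operatorname{dist}}(\phi_1)$, are needed. The nondegeneracy of $\phi_1$, that is, not being concentrated on a great subsphere, is what makes these moments finite. This is also why the statement requires $\phi_2$ to be close to $\phi_1$: so that $\phi_2$ inherits a uniform nondegeneracy constant.
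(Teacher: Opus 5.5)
There is a genuine gap. Your coupling is exactly the paper's: a Poisson process on $(S^{d-1})^2\times\mathbb{R}$ with intensity $\Phi\otimes\gamma\lambda$, symmetrized under $(v_1,v_2)\mapsto(-v_1,-v_2)$. The failure is in the quantitative step. Write $\rho_i=\rho_{Z_0^{(i)}}$. To get $\beta(\rho_p)=1/p$ by raising the $L^p$ radial bound to the power $1/p$, you need a bound \emph{linear} in the transport cost:
\[
\int_{S^{d-1}}\mathbb{E}\,|\rho_1(u)-\rho_2(u)|^p\,\sigma(\mathrm{d}u)\le C\,\gamma^{-p}\,W_{\mathrm{geo}}(\phi_1,\phi_2).
\]
A fixed threshold $\varepsilon$ combined with Markov's inequality cannot deliver this. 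Grant your estimate that atoms with $\theta\le\varepsilon$ contribute of order $\varepsilon^p\gamma^{-p}$. Atoms with $\theta>\varepsilon$ still contribute of order $\gamma^{-p}W_{\mathrm{geo}}/\varepsilon$. The best balance is then of order $W_{\mathrm{geo}}^{p/(p+1)}$, which gives exponent $1/(p+1)$ after the root. For $p=1$ this bookkeeping yields at best $W_{\mathrm{geo}}^{1/2}$, whereas the statement asserts a Lipschitz bound.

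The exponent $1/d$ for $\Delta$ also does not come from a truncation balance. In the paper it comes from $|\rho_1^d-\rho_2^d|\le d\,|\rho_1-\rho_2|\max(\rho_1,\rho_2)^{d-1}$ and H\"older with exponents $(d,\tfrac{d}{d-1})$. This is applied to the linear bound with $p=d$ and to $\int\mathbb{E}[\rho_1^d+\rho_2^d]\,\mathrm{d}\sigma$. What you are missing is an estimate in which every atom contributes in proportion to its own angle, with no threshold.

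Your diagnosis of the grazing regime is also off. Switches of the supporting facet are harmless. If $k$ is the atom realizing $\rho_2(w)$, then $(\rho_1(w)-\rho_2(w))_+\le(a_k-b_k)_+$, where $a_k,b_k$ are the distances at which the ray in direction $w$ meets $H(u_k,t_k)$ and $H(v_k,t_k)$. The large displacements occur when this first-hit hyperplane is nearly parallel to the ray, i.e.\ $\langle w,u_k\rangle=t_k/\rho(w)$ is small, which means the boundary point lies far from the foot point $t_ku_k$. That is not a neighbourhood of facet boundaries.

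The paper handles this by exact computation rather than a geometric measure estimate. It derives $\mathbb{P}[\rho_1(u)\ge r_1,\rho_2(u)\ge r_2]=\exp(-\gamma F_u(r_1,r_2))$. From this it gets the conditional law of $\rho_1(u)$ given $\rho_2(u)$ and an exact tail formula for $(\rho_1(u)-\rho_2(u))_+$. In that formula the direction pair $(v_1,v_2)$ of the first-hit hyperplane carries the weight $\langle u,v_2\rangle_+$, the rate at which such hyperplanes cross the ray. This weight cancels the $1/\langle u,v_2\rangle$ in the relative displacement via $\min\{1,(\langle u,v_2-v_1\rangle/\langle u,v_2\rangle)_+\}\,\langle u,v_2\rangle_+\le\|v_2-v_1\|$. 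Pointwise in $u$ and with no truncation, this yields
\[
\int_{S^{d-1}}\mathbb{E}(\rho_1(u)-\rho_2(u))_+^p\,\sigma(\mathrm{d}u)\le\frac{\mathbb{E}_\Phi\|V_2-V_1\|}{\gamma^p}\int_{S^{d-1}}\frac{\Gamma(p+1)\,\sigma(\mathrm{d}u)}{(h(\Pi_1,u)+h(\Pi_2,u))\,h(\Pi_1,u)^p}.
\]
Closeness of $\phi_2$ then enters only through $|h(\Pi_2,u)-h(\Pi_1,u)|\le\tfrac12W_{\mathrm{geo}}(\phi_1,\phi_2)\le\tfrac12h(\Pi_1,u)$. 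This bounds the integrals by constants depending on $\min_u h(\Pi_1,u)$, so no circumradius moment bounds are needed.
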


This result says that the distribution of the zero cell is locally H\"{o}lder continuous with respect to the Wasserstein distance. In particular, it implies the following Wasserstein stability property for the zero cell: if $W_{\mathrm{geo}}(\phi_1,\phi_2)$ is small, then there is a coupling of the Poisson hyperplane processes such that $\EE[\operatorname{dist}(Z_0^{(1)}, Z_0^{(2)})]$ is small.

The proofs begin with the construction of a specific coupling of the zero cells of two Poisson hyperplane tessellations; see \Cref{s:coupling}. For this coupling, we derive the joint distribution of the radial functions in a fixed direction in \Cref{s:joint_distribution_radial_function}. We then use this joint distribution to obtain an expression for the tail distribution of the difference between the radial functions, which in turn yields upper bounds for the moments of this difference. The main Wasserstein stability results are proved in \Cref{sec:main_results}.

In Section \ref{sec:densityestimation}, we apply the Wasserstein stability results for the zero cell to density estimation with Poisson hyperplane partitions. 
Specifically, we construct zero-cell-based random probability densities whose distributions agree pointwise with those of the corresponding tessellation estimators, and we bound their expected total variation distance in terms of the Wasserstein distance between the directional distributions.

\section{Background and notation}\label{sec:background}

We now give a brief overview of the relevant background following the notation and definitions from \cite{hugPoissonHyperplaneTessellations2024} and \cite{MolchanovBook}.

\subsection{Random closed sets}

Let \(\mathcal{F}\) denote the space of closed subsets of \(\mathbb{R}^d\) endowed with the Effros sigma-algebra, i.e., the sigma-algebra generated by the collection of hits
\(\mathcal{F}_G:=\{F\in\mathcal{F}:F\cap G\neq\emptyset\}\) for all open sets \(G\subset\mathbb{R}^d\).
The Fell (hit-and-miss) topology on \(\mathcal{F}\) is the topology with subbasis consisting of the hits \(\{\mathcal{F}_G: G\text{ open}\}\) and the misses
\(\mathcal{F}^{C}:=\{F\in\mathcal{F}:F\cap C=\emptyset\}\) for all compact sets \(C \subset\mathbb{R}^d\).
Its Borel sigma-algebra coincides with the Effros sigma-algebra.
Since \(\mathbb{R}^d\) is LCHS (locally compact, Hausdorff, and second countable), the Effros sigma-algebra is the same as the sigma-algebra generated by the collection \(\mathcal{F}_C := \{F\in\mathcal{F}: F\cap C\neq\emptyset\}\) for all compact subsets \(C\subset\mathbb{R}^d\).

Let \(\mathcal{K}\) be the space of convex bodies, i.e., non-empty compact convex subsets of \(\mathbb{R}^d\), and let \(\mathcal{K}_0\) be the space of convex bodies containing the origin.
For \(K \in \mathcal{K}\), its \emph{support function} \(h_K:\mathbb{R}^d \to \mathbb{R}\) is defined for all \(x \in \mathbb{R}^d\) by
\begin{equation*}
    h(K,x) = \max\{\langle x,y \rangle: y\in K\}.
\end{equation*}
The Hausdorff metric \(d_H\) between any two convex bodies \(K,L \in \mathcal{K}\) is defined as
\begin{equation*}
    d_H(K,L) = \max\{\sup\{d(x,K): x \in L\}, \sup\{d(y,L): y \in K\}\},
\end{equation*}
where \(d(x,K) = \inf\{ \|x-z\|: z \in K\}\) and $\| \cdot \|$ is the usual Euclidean $L^2$ norm.
The Hausdorff metric can also be expressed in terms of support functions as
\begin{equation*}
    d_H(K,L) = \sup_{v \in S^{d-1}} |h(K,v) - h(L,v)|.
\end{equation*} 
On the one hand, \(\mathcal{K}\) inherits the subspace topology and the Effros sigma-algebra from \(\mathcal{F}\); on the other hand, the Hausdorff metric generates a Borel sigma-algebra on \(\mathcal{K}\).
These two sigma-algebras are one and the same \cite[Theorem 1.3.14 (iii)]{MolchanovBook}.
The collection of hits \(\mathcal{K}_G:=\{K \in \mathcal{K}:K \cap G \neq \emptyset\}\) for all open sets \(G \subset \mathbb{R}^d\) and the collection of misses \(\mathcal{K}^C:=\{K \in \mathcal{K}: K \cap C = \emptyset\}\) for all compact sets \(C \subset \mathbb{R}^d\) form the sub-basis for the subspace topology on \(\mathcal{K}\).

A random closed set is a measurable mapping $X$ from a probability space $(\Omega, \mathcal{A}, \mathbb{P})$ to the measurable space $(\mathcal{F}, \mathcal{B}(\mathcal{F}))$, where $\mathcal{F}$ is the space of closed subsets of $\mathbb{R}^d$ endowed with the Effros $\sigma$-algebra. When the image of $X$ is almost surely contained in $\mathcal{K}$, we refer to $X$ as a random convex body.
A random convex body of interest in this work, described in the following section, is the zero cell of a stationary Poisson hyperplane process with nondegenerate directional distribution.

\subsection{Hyperplane processes}

A hyperplane \(H=H(u,t)\) is a closed subset of \(\mathbb{R}^d\) defined by \(H(u,t):=\{x \in \mathbb{R}^d: \langle x, u \rangle =t \}\) for some $u \in S^{d-1}$ and $t \in \mathbb{R}$.
Note that \(H(u_1,t_1) = H(u_2,t_2)\) for \((u_1,t_1), (u_2,t_2) \in S^{d-1} \times \mathbb{R}\) if and only if \((u_1,t_1) = \pm (u_2,t_2)\).
For a hyperplane \(H(u,t)\), the closed half-space containing \(0\) is denoted by \(H^-(u,t)\).
Let \(\mathcal{H}\) denote the space of hyperplanes in \(\mathbb{R}^d\).
As a subspace of \(\mathcal{F}\), \(\mathcal{H}\) inherits the subspace topology and it is generated by the sub-basis of hits \([G]_{\mathcal{H}}:=\{H \in \mathcal{H}: H \cap G \neq \emptyset\}\) for all open sets \(G \subset \mathbb{R}^d\) and the misses \([C]_{\mathcal{H}}^c:=\{H \in \mathcal{H}: H \cap C = \emptyset\}\) for all compact sets \(C \subset \mathbb{R}^d\). 
The Borel sigma-algebra of the subspace topology on \(\mathcal{H}\) is also generated by the collection \([C]_{\mathcal{H}}:=\{H \in \mathcal{H}: H \cap C \neq \emptyset\}\) for all compact sets \(C\) in \(\mathbb{R}^d\).
Note that the map
\begin{align}\label{eq:quotientHyperplaneMap}
    \mathfrak{q}: S^{d-1} \times \mathbb{R} \to \mathcal{H}
\end{align}
sending \((u,t) \in S^{d-1} \times \mathbb{R}\) to \(H(u,t)\), where \(S^{d-1} \times \mathbb{R}\) is equipped with the product topology, induces the quotient topology on \(\mathcal{H}\).
The quotient and the subspace topologies on \(\mathcal{H}\) are one and the same and so are the sigma-algebras generated by them. 
Unless otherwise stated, measurability is understood with respect to this Borel sigma-algebra on \(\mathcal{H}\).

A measure \(\mu\) on \(\mathcal{H}\) is called \emph{locally finite} if \(\mu([C]_{\mathcal{H}})< \infty\) for all compact subsets \(C \subset \mathbb{R}^d\), and a \emph{counting measure} if \(\mu(B) \in \mathbb{N}\cup \{\infty\}\) for all measurable sets \(B \subset \mathcal{H}\).
Let \(\mathcal{N}:=\mathcal{N}(\mathcal{H})\) denote the measurable space of locally finite counting measures on \(\mathcal{H}\), equipped with the sigma-algebra generated by the evaluation maps \(E_B: \mathcal{N} \to \mathbb{N} \cup \{\infty\}\) for all Borel sets \(B\) in \(\mathcal{H}\), where \(E_B(\mu)=\mu(B)\) for every \(\mu \in \mathcal{N}\).

The space \(\mathcal{N}\) is endowed with the \emph{vague topology}, which is defined through \emph{vague convergence}.
A sequence \((\mu_n) \subset \mathcal{N}\) is said to converge vaguely to \(\mu \in \mathcal{N}\), denoted as $\mu_n \xrightarrow{v} \mu$, if
\begin{align} \label{eq:defVagueConvergence}
    \mu_n f := \int_{\mathcal{H}} f(H) \dint \mu_n(H) \to \int_{\mathcal{H}} f(H) \dint \mu(H):=\mu f
\end{align}
for all \(f \in C_c(\mathcal{H})\), the space of continuous and compactly supported functions on \(\mathcal{H}\), where continuity is understood with respect to the Fell topology (equivalently, the quotient topology).
The Borel sigma-algebra on \(\mathcal{N}\) generated by the open sets of the vague topology coincides with the sigma-algebra generated by the evaluation maps.
Equipped with the vague topology, \(\mathcal{N}\) is a Polish space (i.e., a complete separable metric space).

A \emph{hyperplane process} \(\Psi\) on \(\mathbb{R}^d\) is a measurable map from a probability space to \(\mathcal{N}\).
In other words, \(\Psi\) is a point process on \(\mathcal{H}\).
It is simple if \(\Psi(\{H\}) \in \{0,1\}\) for all \(H \in \mathcal{H}\).
Unless otherwise stated, all point processes considered here are simple, and therefore we identify them with their supports.
The intensity measure \(\Theta\) of \(\Psi\) is defined by \(\Theta(A) = \mathbb{E}[\Psi(A)]\) for all measurable sets \(A \subset \mathcal{H}\).
The vague topology defined by \cref{eq:defVagueConvergence} on \(\mathcal{N}\) extends to the space of locally finite measures on \(\mathcal{H}\).
We further assume that the hyperplane processes have locally finite intensity measures.
A Poisson hyperplane process on \(\mathbb{R}^d\) is a Poisson point process on $\mathcal{H}$ and is characterized by the intensity measure \(\Theta\) on \(\mathcal{H}\).

The group \(\mathbb{R}^d\) acts on \(\mathcal{H}\) by shifts: \(H(u,t) \mapsto H(u,t)+x = H(u, t+ \langle x,u \rangle)\) for all \(x \in \mathbb{R}^d\) and \(H(u,t) \in \mathcal{H}\).
For any simple locally finite counting measure \(\mu \in \mathcal{N}\) and \(x \in \mathbb{R}^d\), define \(\mu+x = \sum_{H \in \mathrm{supp}(\mu)} \delta_{H +x}\).
A hyperplane process \(\Psi\) on \(\mathbb{R}^d\) is called \emph{stationary} if the distribution of \(\Psi\) is invariant under all shifts, i.e., \(\Psi \overset{{(d)}}{=} \Psi+x\) for all \(x \in \mathbb{R}^d\).

A \emph{stationary Poisson hyperplane process} \(\Psi\) with locally finite intensity measure \(\Theta\) is characterized by a positive real number \(\gamma>0\) and an even probability measure \(\phi \in \mathcal{P}(S^{d-1})\) on \(S^{d-1}\) such that
\[\Theta(B)  
:= \mathbb{E}[\Psi(B)] 
= \gamma\int_{\RR} \int_{S^{d-1}} 1_{\{H(u,t) \in B\}} \dint \phi(u) \dint t
= 2 \gamma\int_{\RR_+} \int_{S^{d-1}} 1_{\{H(u,t) \in B\}} \dint \phi(u) \dint t , \]
for all measurable sets \(B\subset\mathcal{H}\) \cite[Chapter 4.4]{weil}.
The measure \(\phi\) is called the \emph{spherical directional distribution} associated with \(\Theta\) (or \(\Psi\)) and the constant $\gamma$ is called the \emph{intensity} of \(\Psi\).
Let \(\lambda_d\) denote the \emph{Lebesgue measure} on \(\mathbb{R}^d\) and let \(\lambda:=\lambda_1\); then \(\Theta = (\phi \otimes \gamma \lambda) \circ \mathfrak{q}^{-1}\), which is the pushforward of the product measure \(\phi \otimes \gamma \lambda\) under the map \(\mathfrak{q}\) (\cref{eq:quotientHyperplaneMap}).

An important parameter of a stationary Poisson hyperplane process \(\Psi\) is its \textit{associated zonoid} \cite[p.156]{weil}.
If \(\Psi\) has directional distribution $\phi$ and intensity $\gamma$, this is defined as the convex body $\Pi^{(\gamma)}$ in $\mathbb{R}^d$ with support function
\begin{align}\label{eq:hZ}
h(\Pi^{(\gamma)},v) = \frac{\gamma}{2}\int_{S^{d-1}} |\langle u,v \rangle| \dint \phi(u), \quad v \in S^{d-1}.
\end{align}
We will write $\Pi$ for the associated zonoid when $\gamma = 1$; this depends only on the directional distribution and will be called the \emph{normalized associated zonoid} of \(\Psi\).
 In particular, $\Pi^{(\gamma)} = \gamma \Pi$.

A hyperplane process \(\Psi\) induces a tessellation of \(\mathbb{R}^d\).
The \emph{zero cell} \(Z_0 := Z_0(\Psi)\) of a Poisson hyperplane process \(\Psi\) is the cell of the induced tessellation that contains the origin and it is a random closed set. If \(\Psi\) is stationary and the directional distribution \(\phi\) is nondegenerate, i.e., the support of \(\phi\) is not contained in any great subsphere of \(S^{d-1}\), then \(Z_0\) is a.s. bounded \cite[Chapter 4.3]{hugPoissonHyperplaneTessellations2024} and is thus a random convex body.
Unless stated otherwise, we assume throughout the paper that the directional distributions of all stationary Poisson hyperplane processes are nondegenerate.

\subsection{Wasserstein distance}

Let \((\chi,d_{\chi})\) be a Polish metric space.
The (extended) Wasserstein distance of (order \(1\)) between any two probability measures \(\mu, \nu\) on \(\chi\) is defined as 
\begin{align}
    \label{eq:Wasserstein_p}
    W_{d_{\chi}}(\mu,\nu) = \inf \biggl \{ \mathbb{E}[d_{\chi}(X,Y)] : \mathcal{L}(X,Y) \in \Pi(\mu,\nu) \biggr \},
\end{align}
where \(\Pi(\mu,\nu)\) denotes the set of couplings of \(\mu\) and \(\nu\), and the infimum is understood as an element of \([0,\infty]\). 
Since \((\chi,d_{\chi})\) is a Polish space, there exists a coupling of \((\mu,\nu)\) that attains the infimum in \eqref{eq:Wasserstein_p} if \(W_{d_{\chi}}(\mu,\nu)< \infty\) \cite[Theorem~4.1]{villaniOptimalTransportOld2009}.
Any such coupling is called an \textit{optimal coupling} of \((\mu,\nu)\).

Our aim in this work is to quantify how perturbations of the directional distribution \(\phi\) of a stationary Poisson hyperplane process affect the distribution of the zero cell \(Z_0\).
To this end, we introduce metrics on the space of inputs \(\mathcal{P}(S^{d-1})\) and on the space of outputs \(\mathcal{K}_0\) (the space of convex bodies containing \(0\)), together with their induced Wasserstein distances. 

First, for any two probability measures \(\phi_1,\phi_2\) on \(S^{d-1}\), let \(W_{\mathrm{geo}}(\phi_1,\phi_2)\) denote the Wasserstein distance induced by the geodesic distance \(\mathrm{geo}\) on \(S^{d-1}\), where
\begin{align}\label{eq:geo}
  \mathrm{geo}(u,v) = \arccos(\langle u, v \rangle)   
\end{align}
for all \(u,v \in S^{d-1}\).
Since \(S^{d-1}\) is compact, \(W_{\mathrm{geo}}(\phi_1,\phi_2)< \infty\).
Next, we define radial metrics and the symmetric-difference volume metric on \(\mathcal{K}_0\) and use their respective Wasserstein metrics on probability measures on \(\mathcal{K}_0\).

For any \(K \in \mathcal{K}_0\), the function \(\rho_K: S^{d-1} \to \mathbb{R}_{\geq 0}\) defined by
\begin{align}\label{eq:radialFunctionDef}
    \rho_K(u):= \sup\{t \geq 0: tu \in K\}
\end{align}
is called the \emph{radial function} of \(K\).
Let \(\sigma\) denote the \emph{spherical Lebesgue measure} on \(S^{d-1}\) defined by
\begin{align*}
    \sigma(A):= d \, \lambda_d\bigl(\{\alpha x : 0 \leq \alpha \leq 1, x \in A\}\bigr)
\end{align*}
for all Borel sets \(A \subset S^{d-1}\).
For \(p \in [1,\infty]\), the \emph{radial \(p\)-metric} between \(K,L \in \mathcal{K}_0\) is defined as 
\begin{align} \label{eq:radial_metric}
    d_{\rho_p}(K,L)=   
    \|\rho_K(\cdot) - \rho_L(\cdot) \|_p =
    	\begin{cases}
        	\left(\int_{S^{d-1}} |\rho_K(u) - \rho_L(u)|^p \sigma(\dint u)\right)^{1/p} & p<\infty, \\
            \sup_{u \in S^{d-1}} |\rho_{K}(u) - \rho_{L}(u)| & p=\infty.
        \end{cases}
\end{align}
The corresponding Wasserstein distance between any two random convex bodies \(Z_1,Z_2\) is denoted by \(W_{\rho_p}(Z_1,Z_2)\).

The \emph{symmetric-difference volume} distance \(d_{\Delta}\) between \(K,L \in \mathcal{K}\) is defined by 
\begin{align}\label{e:symdiff_metric_def}
    d_{\Delta}(K,L) = \lambda_d (K \Delta L) = \lambda_d (K \cup L) - \lambda_d (K \cap L).
\end{align}

If \(K,L \in \mathcal{K}_0\) , then \(d_{\Delta}\) can be expressed in terms of the radial function. In this case,
\begin{align*}
    \lambda_d(K \cup L) = \int_{K\cup L} \dint x
        = \int_{S^{d-1}} \int_0^{\rho_{K\cup L}(u)} t^{d-1} \dint t \sigma(\dint u)
        = \frac{1}{d}\int_{S^{d-1}} \rho_{K\cup L}(u)^d \sigma(\dint u),
\end{align*}
and 
\begin{align*}
    \rho_{K\cup L}(u)^d - \rho_{K\cap L}(u)^d
    &= \max (\rho_{K}(u)^d , \rho_{L}(u)^d) - \min (\rho_{K}(u)^d , \rho_{L}(u)^d)
    = | \rho_{K}(u)^d - \rho_{L}(u)^d |.
\end{align*}
Therefore, for \(K,L \in \mathcal{K}_0\), we obtain 
\begin{align}
    \notag
    d_{\Delta}(K,L) 
    &= \lambda_d (K \cup L) - \lambda_d (K \cap L) = \frac{1}{d} \int_{S^{d-1}} \bigl(\rho_{K\cup L}(u)^d - \rho_{K\cap L}(u)^d\bigr) \sigma(\dint u) 
    \\ \label{def:symmDiff}
    &= \frac{1}{d} \int_{S^{d-1}} |\rho_K(u)^d - \rho_L(u)^d| \sigma(\dint u).
\end{align}
The corresponding Wasserstein distance between any two random convex bodies \(Z_1,Z_2\) \(\in \mathcal{K}_0\) is denoted by \(W_{\Delta}(Z_1,Z_2)\).

\section{Continuity of the zero cell}\label{sec:continuity}

In this section, we give detailed arguments for the intuitive claim that the zero cell of a Poisson hyperplane tessellation is continuous with respect to the intensity measure of the Poisson hyperplane process. 

The first result establishes weak convergence of the zero cells under vague convergence of a general class of nondegenerate intensity measures. 
The second proves the converse for stationary Poisson hyperplane processes. 
Finally, the third strengthens the mode of convergence in the stationary setting: when the processes have a common intensity parameter and their directional distributions converge weakly, the zero cells can be coupled to converge almost surely in the Hausdorff metric.

\subsection{Weak continuity}

We first establish the following continuity statement for zero cells of Poisson
hyperplane processes: whenever the associated intensity measures converge vaguely, the laws
of the zero cells converge weakly.
We then proceed to show that the converse also holds in the stationary case.

We use the following notation in this section.
For any compact set \(K \subset \mathbb{R}^d\), denote the convex hull of \(K\) and \(\{0\}\) by \(K_0\), \([K]_{\mathcal{H}}:=\{H \in \mathcal{H}: H \cap K \neq \emptyset\}\), and \(\operatorname{int} [K]_{\mathcal{H}}\) by the interior of \([K]_{\mathcal{H}}\) in \(\mathcal{H}\).
Note that for any compact convex body \(K\), the set \([K]_{\mathcal{H}}\) is closed in \(\mathcal{H}\).
Therefore, the set \([K]_{\mathcal{H}}^c\) of hyperplanes that avoid \(K\), is a sub-basis element and hence it is an open set.
We will also use the notation \([x]_{\mathcal{H}}\) instead of \([\{x\}]_{\mathcal{H}}\) to denote the set of hyperplanes passing through a point \(x\).
Let \(\mathcal{H}_*:= \mathcal{H} \backslash [0]_{\mathcal{H}}\) denote the set of hyperplanes that avoid \(0\). 
A hyperplane \(H\) is said to \emph{strictly separate} two points, if the points lie in the opposite open half-spaces of \(H\).

\begin{proposition}
    \label{prop:convViaInclusion}

    Let \((\Theta_n)_{n\in\mathbb{N}}\) and \(\Theta\) be locally finite measures on the space of hyperplanes \(\mathcal{H}\) that satisfy \(\Theta([0]_{\mathcal{H}})=0\) and \(\Theta_n([0]_{\mathcal{H}})=0\) for all \(n\).
    Let \(Z_0^n\) and \(Z_0\) be the zero cells of the Poisson hyperplane processes \(\Psi_n\) and \(\Psi\) whose intensity measures are \(\Theta_n\) and \(\Theta\) respectively.
    Assume that \(Z_0^n\) and \(Z_0\) have non-empty interior and are compact almost surely. 
    If \(\Theta_n\) converges vaguely to \(\Theta\), then \(Z_0^n\) converges to \(Z_0\) in distribution.
\end{proposition}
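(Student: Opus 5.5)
Since \(Z_0^n\) and \(Z_0\) are random compact sets in the LCHS space \(\mathbb{R}^d\), I would use the Choquet-type characterization of weak convergence of random closed sets (Molchanov, Theorem 1.7.7). It suffices to show that the capacity functionals \(T_n(K)=\mathbb{P}(Z_0^n\cap K\neq\emptyset)\) converge to \(T(K)=\mathbb{P}(Z_0\cap K\neq\emptyset)\) for all \(K\) in a separating class of compact sets that are continuity sets for \(T\). A convenient separating class is the finite unions of closed balls (or convex polytopes) \(K\) with \(T(K)=T(\operatorname{int}K)\). Since \(T_n\) and \(T\) are complementary to the avoidance probabilities, I will instead work with \(\mathbb{P}(Z_0^n\cap K=\emptyset)\).

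\textbf{Step 1: reduce avoidance to a Poisson void probability.} For a compact convex \(K\), \(Z_0\cap K=\emptyset\) holds iff some hyperplane of \(\Psi\) strictly separates \(0\) from \(K\). Indeed, \(Z_0\) and \(K\) are disjoint convex compacts, and one uses the facet structure of \(Z_0\) together with the a.s. non-empty interior of \(Z_0\). Up to null events this gives
\[
\mathbb{P}(Z_0\cap K\neq\emptyset)=\exp\bigl(-\Theta(S_K)\bigr),
\]
where \(S_K\) is the set of hyperplanes strictly separating \(0\) from \(K\). Equivalently, \(S_K=[K_0]_{\mathcal{H}}\setminus([K]_{\mathcal{H}}\cup[0]_{\mathcal{H}})\). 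Because \(\Theta([0]_{\mathcal{H}})=0\), this equals \(\Theta([K_0]_{\mathcal{H}})-\Theta([K]_{\mathcal{H}})\) up to a null set. For a finite union \(K=\bigcup_i K^i\) of convex bodies, the event \(Z_0\cap K=\emptyset\) is the intersection of the events that each \(K^i\) is separated from \(0\) by some hyperplane. That intersection is not itself a single void event. So I would either restrict to a separating class of \emph{convex} compact sets, or use inclusion–exclusion over the Poisson void probabilities of the unions \(\bigcup_{i\in I}S_{K^i}\). These are again finite Boolean combinations of sets \([C]_{\mathcal{H}}\) with \(C\) compact convex.

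\textbf{Step 2: convergence of intensities.} Vague convergence \(\Theta_n\to\Theta\) yields \(\Theta_n(B)\to\Theta(B)\) for every relatively compact Borel \(B\subset\mathcal{H}\) with \(\Theta(\partial B)=0\) (portmanteau for vague convergence). The sets \([C]_{\mathcal{H}}\) are relatively compact and closed. Their interior contains the hyperplanes meeting \(\operatorname{int}C\), so \(\partial[C]_{\mathcal{H}}\) is contained in the set of hyperplanes touching \(C\) without meeting its interior, i.e. the supporting hyperplanes of \(C\). Hence whenever \(\Theta(\text{supporting hyperplanes of }C)=0\), we get \(\Theta_n(S_K)\to\Theta(S_K)\), and therefore \(T_n(K)\to T(K)\).

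\textbf{Step 3 (main obstacle): show the admissible \(K\) form a separating class.} I must show that the compact sets \(K\) for which the relevant boundaries are \(\Theta\)-null form a separating class. My first attempt is a scaling argument. For fixed convex \(C\), consider the family \(rC+x\), \(r>0\). The sets of supporting hyperplanes of \(rC\) for distinct \(r\) (with \(0\in\operatorname{int}C\)) are pairwise disjoint, since a hyperplane \(H(u,t)\) supports \(rC\) only when \(t=r\,h(C,u)\). Local finiteness of \(\Theta\) then forces all but countably many \(r\) to give \(\Theta\)-null boundaries. Thus, for any compact \(K\) and open \(G\supset K\), one finds arbitrarily close admissible polytope-unions between \(K\) and \(G\), which is the separating property. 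Care is needed when \(0\in K\): then \(T_n(K)=T(K)=1\) trivially. The null hypothesis on \([0]_{\mathcal{H}}\) and the a.s. compactness with non-empty interior handle the remaining degenerate events.
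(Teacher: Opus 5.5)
There is a genuine gap in Step 1, and the rest of the argument depends on it. The claimed equivalence ``\(Z_0\cap K=\emptyset\) iff some hyperplane of \(\Psi\) strictly separates \(0\) from \(K\)'' is false, even for convex \(K\). The separation theorem gives \emph{some} hyperplane separating \(Z_0\) from \(K\), but that hyperplane need not belong to \(\Psi\). Moreover, no single facet hyperplane of \(Z_0\) need have all of \(K\) on its far side.

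Here is a concrete instance in the plane. Take \(Z_0=[-1,1]^2\) and let \(K\) be the segment from \((2,\tfrac12)\) to \((\tfrac12,2)\). Then \(K\cap Z_0=\emptyset\), yet each of the lines \(x=\pm1\), \(y=\pm1\) either meets \(K\) or has \(K\) on the same side as \(0\). This configuration has positive probability, e.g.\ for the isotropic process: lines near \(x=\pm1\), \(y=\pm1\) and no line in \(S_K\). So only the inclusion \(\{\Psi(S_K)>0\}\subset\{Z_0\cap K=\emptyset\}\) holds, and the inequality \(\mathbb{P}(Z_0\cap K\neq\emptyset)\le\exp(-\Theta(S_K))\) is in general strict.

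The event \(\{Z_0\cap K\neq\emptyset\}\) says that \emph{some} point of \(K\) is separated from \(0\) by \emph{no} hyperplane. That is not a void event of \(\Psi\) on a fixed set, so the capacity functional of the zero cell has no such closed form in \(\Theta\). Your inclusion--exclusion over finite unions rests on the same false identity. Separately, restricting the capacity-functional criterion (Theorem 1.7.7) to convex \(K\) would not be legitimate, since convex compacta do not form a separating class.

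The quantifiers work in your favour if you switch to the \emph{containment} functional, which is what the paper does. For convex compact \(K\), \(K\subset Z_0\) iff no hyperplane of \(\Psi\) strictly separates \(0\) from \emph{any} point of \(K\), i.e.\ \(\Psi(\operatorname{int}[K_0]_{\mathcal{H}})=0\). Note that this is the ``some point'' set, not your ``all of \(K\)'' set \(S_K\). Likewise, \(K\subset\operatorname{int}Z_0\) iff \(\Psi([K_0]_{\mathcal{H}})=0\). Both are Poisson void probabilities.

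Molchanov's Proposition 1.8.16 then finishes the proof. It says that random convex bodies with non-empty interior converge iff \(\mathbb{P}(K\subset Z_n)\to\mathbb{P}(K\subset Z)\) for all convex compact \(K\) with \(\mathbb{P}(K\subset Z)=\mathbb{P}(K\subset\operatorname{int}Z)\). By the two void probabilities above, this admissibility condition is exactly \(\Theta([K_0]_{\mathcal{H}}\setminus\operatorname{int}[K_0]_{\mathcal{H}})=0\). Hence \(\operatorname{int}[K_0]_{\mathcal{H}}\) is a \(\Theta\)-continuity set, and vague convergence applies directly. Your Step 3 scaling argument is correct in itself but becomes unnecessary, because the criterion already restricts to the good \(K\).
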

The condition in the above proposition that the intensity measures assign measure \(0\) to \([0]_{\mathcal{H}}\) ensures that almost surely no hyperplane passes through the origin, so that the closed half-space bounded by each hyperplane and containing the origin is unambiguously defined.
The proof of \Cref{prop:convViaInclusion} (below) relies on the convergence criterion \cite[Proposition 1.8.16]{MolchanovBook}, which states, under the assumption that all the random convex bodies have non-empty interior, that a sequence of random convex bodies \(Z_n\) converges to a random convex body \(Z\) if and only if \(\mathbb{P}[K \subset Z_n] \to \mathbb{P}[K \subset Z]\) as \(n \to \infty\) for all convex compact sets \(K\) such that \(\mathbb{P}[K \subset Z] = \mathbb{P}[K \subset \operatorname{int} Z]\).
We will also use the following characterization for vague convergence of measures on \(\mathcal{H}\) (see \cite[Lemma 4.1]{kallenbergRandomMeasuresTheory2017}): a sequence \((\mu_n)\) of locally finite measures on \(\mathcal{H}\) converges vaguely to a locally finite measure \(\mu\) if and only if \(\mu_n (A) \to \mu(A)\) for all continuity sets \(A\) of \(\mu\), i.e., subsets \(A\) such that the boundary of \(A\) has measure \(0\) with respect to \(\mu\).

\begin{proof}[Proof of \Cref{prop:convViaInclusion}]

Since the intensity measures assign measure \(0\) to \([0]_{\mathcal{H}}\), without loss of generality we assume that the hyperplanes considered in the proof belong to \(\mathcal{H}_*\) unless stated otherwise.
    Let \(K\) be a non-empty compact convex subset of \(\mathbb{R}^d\) and \(K_0\) be the convex hull of \(K \cup \{0\}\).
 Consider the set
 \begin{align*}
    S_K:= \{H \in \mathcal{H}_*: H \text{ strictly separates }0 \text{ and some point of }K \}.
 \end{align*}
 Then, for any hyperplane \(H(u,t)\), the following equivalent conditions hold:
 \begin{align*}
    H(u,t) \in S_K &\iff \min_{x \in K_0} \langle x, u \rangle < t < \max_{x \in K_0} \langle x, u \rangle, \\
    H(u,t) \in [K_0]_{\mathcal{H}} &\iff \min_{x \in K_0} \langle x, u \rangle \leq t \leq \max_{x \in K_0} \langle x, u \rangle.
 \end{align*}
 Therefore, \(S_K = \text{int} [K_0]_{\mathcal{H}}\).

 Observe that \(K \subset Z_0\) if and only if no hyperplane of \(\Psi\) belongs to \(S_K\), equivalently \(\Psi(\text{int}[K_0]_{\mathcal{H}})=0\), and similarly, \(K \subset \text{int} Z_0\) if and only if no hyperplane of \(\Psi\) belongs to \([K_0]_{\mathcal{H}}\).
Similar statements can be made for $Z_0^n$ and $\operatorname{int}(Z_0^n)$, and because our hyperplane processes are Poisson, this leads to the following equalities:

\begin{align*}
        \PP[K \subset Z_0]
        &= \exp\left( - \Theta( \operatorname{int} [K_0]_{\mathcal{H}} ) \right),\\
        \PP[K \subset Z_0^n] 
        &= \exp\left( - \Theta_n( \operatorname{int} [K_0]_{\mathcal{H}} ) \right), \\
       \text{ and } \PP[K \subset \operatorname{int} Z_0] 
        &= \exp\left( - \Theta( [K_0]_{\mathcal{H}} ) \right).
    \end{align*}
Now, consider a convex body for which   
\(\PP[K \subset Z_0] = \PP[K \subset \operatorname{int} Z_0]\).
   By the above equalities this means that
    $\Theta(  \operatorname{int} [K_0]_{\mathcal{H}} ) = \Theta( [K_0]_{\mathcal{H}} )$,
  equivalently,
  
\[ 
    \Theta(\partial \operatorname{int} [K_0]_{\mathcal{H}})
    = \Theta\bigl(\overline{\operatorname{int}[K_0]_{\mathcal{H}}} \setminus  \operatorname{int}[K_0]_{\mathcal{H}} \bigr) \leq \Theta \bigl([K_0]_{\mathcal{H}} \setminus \operatorname{int} [K_0]_{\mathcal{H}} \bigr) = 0 ,\]

    where the inequality in the above follows since \([K_0]_{\mathcal{H}}\) is a closed subset of \(\mathcal{H}\).
    Thus  \(\operatorname{int} [K_0]_{\mathcal{H}}\) is a continuity set of $\Theta$, and 
     
  \[
    \Theta_n(\operatorname{int} [K_0]_{\mathcal{H}}) \to \Theta(\operatorname{int} [K_0]_{\mathcal{H}}),
    \]
    because $\Theta_n\to\Theta$ vaguely.
    Therefore
    
    \begin{align*}
        \PP[K \subset Z_0^n] 
        &\to \PP[K \subset Z_0] .
    \end{align*}
    We can then conclude by \cite[Proposition 1.8.16]{MolchanovBook} that $Z_0^n \to Z_0$ in distribution.
\end{proof}

If the Poisson hyperplane processes in \Cref{prop:convViaInclusion} are stationary, the next proposition shows that vague convergence of the intensity measures is equivalent to weak convergence of the zero cells.
Note that the assumptions stated in \Cref{prop:convViaInclusion} are satisfied by all stationary Poisson hyperplane processes.

For a finite even signed measure \(\mu\) on \(S^{d-1}\), its cosine transform \(C_{\mu}:S^{d-1} \to \mathbb{R}\) is defined by
\begin{align*}
    C_{\mu}(v):= \int_{S^{d-1}} | \langle v,u \rangle| \, \mu(du), v \in S^{d-1}.
\end{align*}
The cosine transform uniquely determines an even finite signed measure.
That is, if \(\mu_1,\mu_2\) are two even finite measures on \(S^{d-1}\) such that \(C_{\mu_1}(v) = C_{\mu_2}(v)\) for all \(v \in S^{d-1}\), then \(\mu_1 = \mu_2\) \cite[Lemma 5.2.2]{hugPoissonHyperplaneTessellations2024}.

\begin{proposition} \label{prop:zero-cell-continuity}
Let \(\phi_n,\phi\in\mathcal{P}(S^{d-1})\) be even probability measures, and let \(\gamma_n,\gamma > 0\).
For each \(n\), let \(Z_0^n\) be the zero cell of the stationary Poisson hyperplane process \(\Psi_n\) with intensity measure
\(\Theta_n=(\phi_n \otimes \gamma_n \lambda)\circ \mathfrak{q}^{-1}\) and let \(Z_0\) be the zero cell of the stationary Poisson hyperplane process \(\Psi\) with intensity measure \(\Theta=(\phi \otimes \gamma \lambda)\circ \mathfrak{q}^{-1}\).
Then, \(\phi_n \Rightarrow \phi\) (weakly) and \(\gamma_n \to \gamma\) if and only if \(Z_0^n\) converges to \(Z_0\) in distribution as random convex bodies as $n \to \infty$.
\end{proposition}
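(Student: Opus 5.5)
For the forward direction I would reduce to \Cref{prop:convViaInclusion} by showing that \(\phi_n \Rightarrow \phi\) and \(\gamma_n \to \gamma\) imply \(\Theta_n \to \Theta\) vaguely. Take \(f \in C_c(\mathcal{H})\). Then
\[
\Theta_n f = \gamma_n \int_{S^{d-1}} g(u)\, \phi_n(\dint u), \qquad g(u) := \int_{\RR} f(H(u,t)) \dint t .
\]
Compact support of \(f\) means its support lies in \([B_R]_{\mathcal{H}}\) for some ball \(B_R\). So the integrand vanishes for \(|t| > R\), and \(f \circ \mathfrak{q}\) is uniformly continuous on the compact set \(S^{d-1}\times[-R,R]\). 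Hence \(g\) is continuous and bounded on \(S^{d-1}\), and weak convergence together with \(\gamma_n\to\gamma\) gives \(\Theta_n f \to \Theta f\). The hypotheses of \Cref{prop:convViaInclusion} hold for stationary processes, as noted before the statement, so \(Z_0^n \to Z_0\) in distribution.

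For the converse I would extract \(\gamma_n\phi_n\) from inclusion probabilities of segments. For \(v\in S^{d-1}\) and \(r>0\), let \(K=[0,rv]\). A hyperplane \(H(u,t)\) strictly separates \(0\) from some point of \(K\) iff \(0<t<r\langle u,v\rangle\) (or the symmetric case). Therefore
\[
\PP[K\subset Z_0^n] = \exp\bigl(-\gamma_n r\, C_{\phi_n}(v)\bigr),
\]
and the same formula holds for \(Z_0\). This is continuous in \(r\), and \(\PP[K\subset \operatorname{int} Z_0]\) takes the same value, because the boundary hyperplanes have \(\Theta\)-measure zero: \(\phi\otimes\lambda\) gives no mass to \(t\in\{0, r\langle u,v\rangle\}\). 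So \(K\) satisfies the condition of \cite[Proposition 1.8.16]{MolchanovBook}, and the "only if" direction of that criterion yields \(\gamma_n C_{\phi_n}(v) \to \gamma C_\phi(v)\) for every \(v\).

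Next comes tightness and identification. Since \(C_{\phi_n}(v)\le 1\), boundedness of \(\gamma_n\) has to come from elsewhere. Average over \(v\) with respect to \(\sigma\): the integral \(\int_{S^{d-1}} C_{\mu}(v)\,\sigma(\dint v) = c_d\,\mu(S^{d-1})\) with \(c_d>0\) independent of \(\mu\). Pointwise convergence alone does not let me pass to the limit under the integral, but each \(\gamma_n C_{\phi_n}\) is the support function of a zonoid (up to a factor \(2\)), hence sublinear. Pointwise convergence of support functions implies uniform convergence on \(S^{d-1}\), since convex functions converging pointwise converge locally uniformly. Integrating then gives \(\gamma_n c_d \to \gamma c_d\), so \(\gamma_n\to\gamma\) and \(C_{\phi_n}\to C_\phi\) pointwise. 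The \(\phi_n\) are probability measures on a compact space, so every subsequence has a weakly convergent further subsequence with some even limit \(\psi\). Because \(|\langle v,\cdot\rangle|\) is continuous, \(C_{\phi_{n_k}}(v)\to C_\psi(v)\), hence \(C_\psi=C_\phi\). Uniqueness of the cosine transform \cite[Lemma 5.2.2]{hugPoissonHyperplaneTessellations2024} gives \(\psi=\phi\), and therefore \(\phi_n\Rightarrow\phi\).

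The main obstacle I expect is the converse's use of the Molchanov criterion. One must check that it really provides convergence of \(\PP[K\subset Z_0^n]\) for these degenerate (segment) bodies \(K\), and not only for full-dimensional ones. If it does not, I would replace the segment \([0,rv]\) with thin convex bodies, for example \(K_\epsilon=[0,rv]+\epsilon B\), and let \(\epsilon\to 0\). Monotonicity of the inclusion probabilities and continuity of the explicit limit would handle that limit. The interchange of limits via convexity in the previous step is the other delicate point.
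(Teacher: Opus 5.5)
Your proposal is correct. Its overall structure matches the paper's. The forward direction goes through vague convergence and \Cref{prop:convViaInclusion}, and you supply the justification of \(\Theta_n\to\Theta\) that the paper only asserts. The converse tests the Molchanov criterion on bodies with explicit inclusion probabilities, obtains \(\gamma_n C_{\phi_n}(v)\to\gamma C_\phi(v)\), and then identifies the limit of \(\phi_n\) by compactness and uniqueness of the cosine transform. The paper tests with singletons \(\{x\}\) rather than segments \([0,rv]\). Since \(Z_0\) is convex and contains \(0\), the two choices are equivalent.

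The genuine difference is how you obtain \(\gamma_n\to\gamma\). The paper uses one additional full-dimensional test body, the unit ball. Tangent hyperplanes are \(\Theta\)-null, so \(B_d\) is admissible, and \(\PP[B_d\subset Z_0^n]=e^{-2\gamma_n}\) gives \(\gamma_n\to\gamma\) at once. You instead extract \(\gamma_n\) from the pointwise limits alone. Since \(\gamma_n C_{\phi_n}=2h(\gamma_n\Pi_n,\cdot)\) is sublinear, pointwise convergence upgrades to uniform convergence on \(S^{d-1}\). A cheaper alternative is the uniform bound \(h(v)\le\sum_i|v_i|\max\{h(e_i),h(-e_i)\}\), which suffices for dominated convergence. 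Integrating against \(\sigma\) then recovers \(\gamma_n\), up to the constant \(2\kappa_{d-1}\), as a mean width. This is valid. The paper's route is shorter; yours shows that the one-point coverage probabilities, equivalently the laws of the radial functions, already determine both \(\gamma\) and \(\phi\).

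On the obstacle you anticipate, the paper applies the same criterion to singletons, so degenerate \(K\) needs no special treatment. If you did thicken, monotonicity alone would give only a one-sided bound. The explicit formula \(\PP[K_\varepsilon\subset Z_0^n]=\exp\bigl(-\gamma_n(rC_{\phi_n}(v)+2\varepsilon)\bigr)\) instead yields convergence of both \(\gamma_n\) and \(\gamma_nC_{\phi_n}(v)\) directly; at \(r=0\) it is exactly the paper's ball argument.
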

\begin{proof}
    First observe that if \(\gamma_n \to \gamma\) and \(\phi_n \Rightarrow \phi\) then \(\Theta_n \overset{v}{\to} \Theta\) and therefore by \Cref{prop:convViaInclusion},  \(Z_0^n\) converges in distribution to \(Z_0\).
    Conversely, assume that \(Z_0^n\) converges in distribution to \(Z_0\).
    Fix \(x \in \mathbb{R}^d\), and let 
    \[A_x:=\{H \in \mathcal{H}: H \text{ strictly separates } 0 \text{ and } x\}.\]

    Note that \(x \in \operatorname{int} Z_0^n\) if and only if no hyperplane of \(\Psi_n\) strictly separates \(x\) and \(0\). 
   Moreover, in the stationary case the set of hyperplanes passing through a fixed point \(x\) has \(\Theta_n\)-measure zero. 
   Hence 
   \[
    \mathbb{P}[x \in \partial Z_0^n] 
    \leq 
    \mathbb{P}[\Psi_n([x]_{\mathcal{H}})>0]
    =
    1 - \exp(- \Theta_n([x]_{\mathcal{H}}))
    =0,
    \]
   and therefore 
    \begin{align*}
        \mathbb{P}[x \in \operatorname{int}Z_0^n] = \mathbb{P}[x \in Z_0^n] = \mathbb{P}[\Psi_n(A_x) = 0] = \exp(-\Theta_n(A_x)).
    \end{align*}
Similarly,
    \begin{align*}
        \mathbb{P}[x \in \operatorname{int}Z_0] = \mathbb{P}[x \in Z_0] = \exp(-\Theta(A_x)).
    \end{align*}
    Since \(Z_0^n\) converges in distribution to \(Z_0\), Proposition 1.8.16 in \cite{MolchanovBook} (also described after the statement of Proposition \ref{prop:convViaInclusion} above) implies that \(\mathbb{P}[x \in \operatorname{int}Z_0^n] \to \mathbb{P}[x \in \operatorname{int}Z_0]\).
    Hence \(\Theta_n(A_x) \to \Theta(A_x)\).

  Now, by the stationary representation of the intensity measure, for every \(x\in\mathbb R^d\), as $n \to \infty$,
    \begin{align}\label{eq:convergenceCosineTransform}
        \Theta_n(A_x) &= \mathbb{E}\bigg[\int_{\mathcal{H}} \mathbf{1}_{A_x}(H) \Psi_n(d H)\bigg] = \gamma_n \int_{S^{d-1}} |\langle x,u \rangle| \phi_n(du) \to  \Theta(A_x) =\gamma \int_{S^{d-1}} |\langle x,u \rangle| \phi(du).
    \end{align}
Then \eqref{eq:convergenceCosineTransform} implies that

\begin{align} \label{eq:2convergenceCosineTransform}
 \gamma_n C_{\phi_n}(v) \to \gamma C_{\phi}(v),
 \qquad v \in S^{d-1}.
\end{align}

First, we show that \(\gamma_n \to \gamma\) as \(n \to \infty\).
 Consider the set \([B_d]_{\mathcal{H}}\) of hyperplanes intersecting the closed unit ball in \(\mathbb{R}^d\).
 Then,
 \begin{align*}
    \Theta\bigl([B_d]_{\mathcal{H}} \bigr)
    &=
    \gamma \int_{\mathbb{R}} \int_{S^{d-1}} 
    \mathbf{1}\{H(u,t) \cap B_d \neq \emptyset\} \phi (du) dt
    = \gamma \int_{S^{d-1}} \lambda([-1,1]) \phi(du) = 2 \gamma.
 \end{align*}
 Similarly, \(\Theta_n\bigl([B_d]_{\mathcal{H}}\bigr) = 2 \gamma_n\).
 Since the set \(T\) of hyperplanes that are tangent to \(S^{d-1}\) have \(\Theta\)-measure \(0\), we have
 \begin{align*}
    \mathbb{P}[\{B_d \subset Z_0\} \backslash \{B_d \subset \operatorname{int} Z_0\}]
    &=
    \mathbb{P}[B_d \subset Z_0, B_d \cap \partial Z_0 \neq \emptyset]
    \leq \mathbb{P}[\Psi(T)>0]\\
    &=1-\exp(-\Theta(T))=0.
 \end{align*}
 Similar results hold for \(Z_0^n\).
 Thus, \(B_d\) is a continuity set for \(Z_0\) and for \(Z_0^n\) for all \(n\).

 Since \(Z_0^n\) converges to \(Z_0\) in distribution, by Proposition 1.8.16 in \cite{MolchanovBook},
 \[
 \mathbb{P}[B_d \subset \operatorname{int} Z_0^n] 
 = 
 \mathbb{P}[B_d \subset Z_0^n] 
 \to 
 \mathbb{P}[B_d \subset Z_0] 
 = \mathbb{P}[B_d \subset \operatorname{int} Z_0].
 \]
 Note that
 \[
 \mathbb{P}[B_d \subset \operatorname{int}Z_0^n] 
 = 
 \mathbb{P}[\Psi_n([B_d]_{\mathcal{H}})=0]
 =
 \exp(-\Theta_n([B_d]_{\mathcal{H}}))
 = e^{-2 \gamma_n},
 \]
 and similarly \(\mathbb{P}[B_d \subset \operatorname{int}Z_0] = e^{-2 \gamma}\).
 Therefore, \(\gamma_n \to \gamma\) as \(n \to \infty\).

 We now show that \(\phi_n \Rightarrow \phi\).
 Since \(\gamma_n \to \gamma\), from \Cref{eq:2convergenceCosineTransform} we have \(C_{\phi_n}(v) \to C_{\phi}(v)\) for all \(v \in S^{d-1}\). 

Since \((\phi_n)\) is a sequence of probability measures on the compact space \(S^{d-1}\), it follows that every subsequence of \((\phi_n)\) has a further subsequence that converges weakly to a probability measure on \(S^{d-1}\). 
Since each \(\phi_n\) is even, every such weak limit is also an even probability measure.

Let \((\phi_{n_k})\) be a subsequence converging weakly to an even probability measure \(\phi'\) on \(S^{d-1}\).
Since the map \(u \mapsto |\langle v,u \rangle|\) is continuous and bounded on \(S^{d-1}\) for every \(v \in S^{d-1}\), weak convergence gives \(C_{\phi_{n_k}}(v) \to C_{\phi'}(v), \; v \in S^{d-1}\).
On the other hand, by \eqref{eq:2convergenceCosineTransform}, \(C_{\phi_{n_k}}(v) \to C_{\phi}(v)\) for all \(v \in S^{d-1}\).
Hence \(C_{\phi'}(v)=C_\phi(v)\) for all \(v\in S^{d-1}\), and therefore \(\phi'=\phi\) by the remark preceding the proposition.
Thus, every weakly convergent subsequence of \((\phi_n)\) has limit \(\phi\), and therefore \(\phi_n\Rightarrow\phi\).

This completes the proof.
\end{proof}

\subsection{Convergence in Hausdorff metric}
If we assume the Poisson hyperplane processes are stationary with the same intensity parameter $\gamma$, we can obtain a stronger continuity result. In this case, we can show almost sure convergence with respect to the Hausdorff distance of the zero cells as the directional distributions converge weakly.

Recall that all the directional distributions considered are nondegenerate, i.e., their supports are not contained in any great subsphere.

\begin{proposition}
    Let $(\phi_n)_{n \geq 0}$ be a sequence of directional distributions such that \(\phi_n \Rightarrow \phi_0\) as \(n \to \infty\).
    Let $\gamma>0$.
    Then there exists a sequence $(Z_0^n)_{n \geq 0}$ of random polytopes such that
    \begin{itemize}
        \item[(i)] $Z_0^n$ is distributed as the zero cell of a stationary Poisson hyperplane process with directional distribution $\phi_n$ and intensity $\gamma$, for any $n \geq 0$, and  
        \item[(ii)] $d_H(Z_0^n,Z_0^0) \to 0$ as $n\to\infty$, almost surely.
    \end{itemize}
\end{proposition}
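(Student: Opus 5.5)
We couple all processes on one probability space through a single Poisson process, and then use a.s. convergence of the hyperplanes together with a compactness argument. Since $\phi_n \Rightarrow \phi_0$ on the Polish space $S^{d-1}$, Skorokhod's representation theorem gives random unit vectors $U^{(n)}$, $n\ge 0$, on a common space with $U^{(n)}\sim\phi_n$ and $U^{(n)}\to U^{(0)}$ almost surely. Let $\{(T_i, (U_i^{(n)})_{n\ge0})\}_{i}$ be a Poisson process on $\mathbb{R}\times (S^{d-1})^{\mathbb{N}}$ with intensity $\gamma\lambda\otimes \mathcal{L}((U^{(n)})_{n})$. Equivalently, take a Poisson process $\{T_i\}$ of intensity $\gamma$ on $\mathbb{R}$ and attach i.i.d.\ marks $(U_i^{(n)})_n$ distributed like $(U^{(n)})_n$. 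Set $\Psi_n=\{H(U_i^{(n)},T_i)\}_i$. By the marking and mapping theorems, $\Psi_n$ is Poisson with intensity $(\phi_n\otimes\gamma\lambda)\circ\mathfrak q^{-1}$. Oddness is not an issue, since $\phi_n$ is even only as a description and we parametrize over all $t\in\mathbb{R}$ with $\phi_n$ a probability measure, matching the first representation of $\Theta$. So (i) holds with $Z_0^n=Z_0(\Psi_n)$. Discarding a null set, we may assume that for every $i$, $U_i^{(n)}\to U_i^{(0)}$, and that $T_i\neq0$ for all $i$.

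For (ii), fix an outcome. First, $Z_0^0$ is a.s.\ a bounded polytope, so there are finitely many indices $i_1,\dots,i_m$ with $Z_0^0=\bigcap_{k}H^-(U_{i_k}^{(0)},T_{i_k})$. This intersection is bounded, its normals positively span, and it has $0$ in its interior. Write $P^n:=\bigcap_k H^-(U_{i_k}^{(n)},T_{i_k})$. Because finitely many half-spaces with converging normals and fixed offsets are involved, and the limit polytope is bounded with nonempty interior, $P^n\to Z_0^0$ in $d_H$. This is standard: the support and radial functions of a bounded intersection depend continuously on the normals, as one sees from the polar polytope $\operatorname{conv}\{U^{(n)}_{i_k}/T_{i_k}\}$ (with signs adjusted so that $0$ lies inside), whose vertices converge, followed by continuity of polarity on bodies containing $0$ in the interior. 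Hence for large $n$, $P^n\subset R B_d$ for some fixed $R$, e.g.\ $R=2\max_{u}\rho_{Z_0^0}(u)+1$.

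Second, $Z_0^n\subset P^n$, and $Z_0^n$ is obtained from $P^n$ by intersecting with the remaining half-spaces. Only hyperplanes meeting $RB_d$ can cut $P^n$, which requires $|T_i|\le R$. Almost surely there are finitely many such $i$, say $i\in I_R$. For $i\in I_R\setminus\{i_k\}$, the hyperplane $H(U_i^{(0)},T_i)$ does not cut the interior of $Z_0^0$. We also need it not to cut $Z_0^0$ at all except possibly on the boundary. For the a.s.\ statement we use that a.s.\ no non-facet hyperplane of $\Psi_0$ touches $Z_0^0$: touching requires some hyperplane to pass through a vertex of the arrangement of the other hyperplanes, which is a null event by the Mecke formula, since $\Theta([x]_{\mathcal H})=0$. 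So $\operatorname{dist}(Z_0^0, H(U_i^{(0)},T_i))=\delta_i>0$ for the finitely many $i\in I_R\setminus\{i_k\}$. Since $U_i^{(n)}\to U_i^{(0)}$ and everything lives in $RB_d$, for large $n$ these hyperplanes miss the $\delta/2$-neighborhood of $Z_0^0$, where $\delta=\min_i\delta_i$. They therefore miss $P^n$ once $d_H(P^n,Z_0^0)<\delta/2$. Hence $Z_0^n=P^n$ for all large $n$, and $d_H(Z_0^n,Z_0^0)\to0$.

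The main obstacle is the uniformity in the second step: $\Psi_n$ has infinitely many hyperplanes, so we need a fixed compact window $RB_d$, valid for all large $n$, outside of which nothing matters. This is why we first establish $P^n\subset RB_d$ from the finitely many facet hyperplanes, and only then invoke the finiteness of $I_R$. The other delicate point is the genericity statement that a.s.\ no extra hyperplane touches $Z_0^0$. If the Mecke argument is awkward, we can instead bound the displacement directly: $Z_0^0\subset Z_0^n$ fails only by an amount controlled by $\max_{i\in I_R}|U_i^{(n)}-U_i^{(0)}|R$, and a touching hyperplane only removes a vanishing sliver. This gives convergence even in that case.
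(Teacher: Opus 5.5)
Your proof is correct. It differs from the paper's in how it treats the hyperplanes near the origin that are not facet hyperplanes of $Z_0^0$.

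The paper uses a Poisson process $T_1<T_2<\cdots$ of intensity $2\gamma$ on $(0,\infty)$, which is equivalent to your intensity-$\gamma$ process on $\mathbb{R}$. It takes $R=\max_{x\in Z_0^0}\|x\|+1$ and keeps \emph{all} $M$ hyperplanes with $T_j<R$, whether or not they are facets. It then sandwiches
\[
\Bigl(\bigcap_{j\le M}H^-(U_j^n,T_j)\Bigr)\cap B_R\subset Z_0^n\subset\bigcap_{j\le M}H^-(U_j^n,T_j),
\]
and applies continuity of finite intersections of half-spaces with bounded limit to this whole collection, so both sides converge to $Z_0^0$.

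You justify that continuity statement correctly via polarity and convergence of $\operatorname{conv}\{U_i^{(n)}/T_i\}$. That argument does not care whether some constraints are redundant or only tangent to the limit. So the paper never needs your genericity claim that no non-facet hyperplane touches $Z_0^0$, nor your separate $\delta$-margin argument for the finitely many indices in $I_R$.

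Your genericity step is valid. A tangent non-facet hyperplane would pass through a vertex of $Z_0^0$, i.e. through the intersection point of $d$ other hyperplanes with linearly independent normals. The multivariate Mecke formula together with $\Theta([x]_{\mathcal H})=0$ makes this a null event. It does cost an extra probabilistic lemma.

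In return, your route gives slightly finer information: for all large $n$, $Z_0^n=P^n$. That is, $Z_0^n$ is cut out by the half-spaces indexed by the facets of $Z_0^0$, and all other hyperplanes are eventually inactive. Your fallback remark about a vanishing sliver is essentially the paper's route: apply the polarity continuity directly to the finite index set $I_R$ instead of only to the facet indices.
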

\begin{proof}
    First, observe that by Skorokhod's representation theorem, there exists a random sequence of unit vectors $U=(U^0, U^1, \ldots)$ such that $U^n$ has distribution $\phi_n$ for any $n \geq 0$, and $U^n \to U^0$ as $n \to \infty$, almost surely.

    Let $(U_j)_{j\geq 1}$ be a collection of i.i.d.\ copies of $U$.
    Let $T=(T_1<T_2<\cdots)$ be a homogeneous Poisson process on $(0,\infty)$ of intensity $2\gamma$ that is independent of \((U_j)_{j \geq 1}\).
    For any $n \geq 0$, using the standard representation of a stationary hyperplane process by distances \(T_j>0\) from the origin and i.i.d. directions \(U_j^n\sim \phi_n\),
    the collection $\{ H(U_j^n,T_j) : j \in\mathbb{N} \}$ is a stationary Poisson hyperplane process with directional distribution $\phi_n$ and intensity $\gamma$.
    For all $n \geq 0$, let 
    \begin{align}\label{e:zerocelln_rep}
    Z_0^n = \bigcap_{j\in\mathbb{N}} H^-(U_j^n,T_j)
    \end{align}
    be the corresponding zero cells.
    We will show that the statement holds for $(Z_0^n)_{n \geq 0}$.

    Let $R' = \max \{ \|x\| : x \in  Z_0^0\}$, \(R:= R'+1\),  $M = \max \{ j : T_j < R\} $, and \(B_R\) be the closed ball centered at \(0\) and of radius \(R\).
    We can thus write
    \begin{align}
        \label{eq:2926a}
        Z_0^0 = \bigcap_{j=1}^M H^-(U_j^0,T_j) .
    \end{align} 
    We can bound $Z_0^n$ by sets comparable to the last intersection.
    Namely we have
    \begin{align}
        \label{eq:2926b}
        \left(\bigcap_{j=1}^M H^-(U_j^n,T_j)\right) \cap B_R
        = \left(\bigcap_{j=1}^\infty H^-(U_j^n,T_j) \right) \cap B_R
        \subset Z_0^n 
        \subset \bigcap_{j=1}^M H^-(U_j^n,T_j).
    \end{align}
    \begin{figure}
        \centering
        \includegraphics[width=0.5\linewidth]{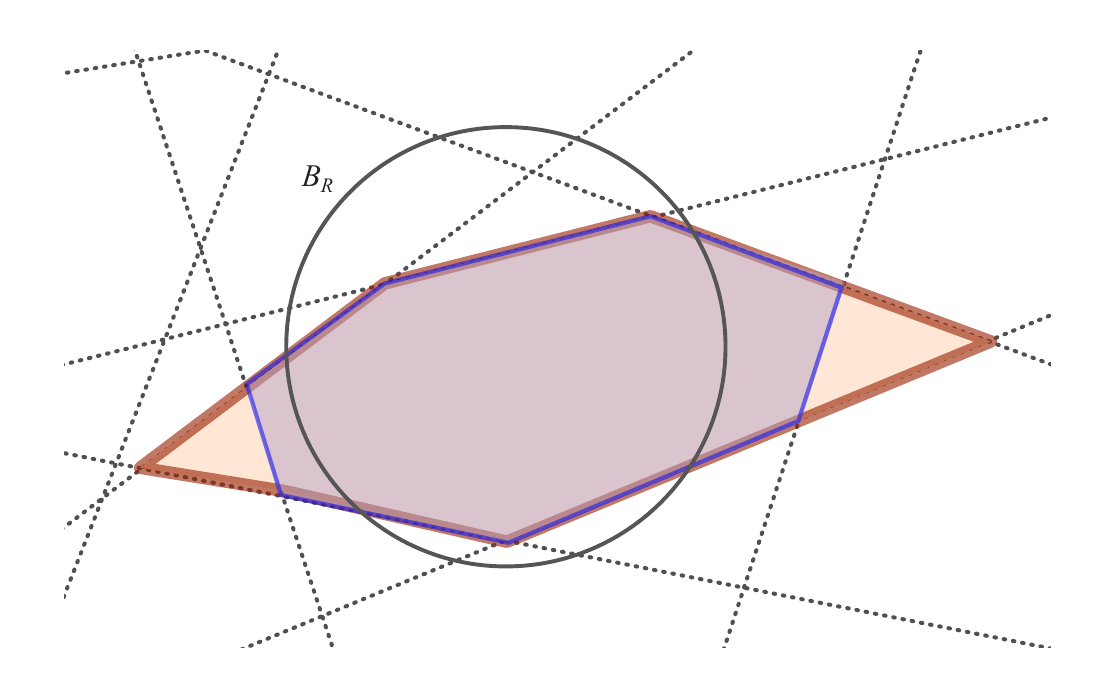}
        \caption{Illustration of the inclusions in \Cref{eq:2926b}.
        The brown and blue polygons represent $\bigcap_{j=1}^M H^-(U_j^n,T_j)$ and $\bigcap_{j=1}^\infty H^-(U_j^n,T_j) = Z_0^n$, respectively.}
        \label{fig:prop5}
    \end{figure}%
    The equality follows from the fact that $B_R \subset H^-(u,T_j)$ for any $u\in S^{d-1}$ and any $j>M$ because $T_j\geq R$ for such $j$,
    and both inclusions follow from the representation of $Z_0^n$ in \eqref{e:zerocelln_rep}. See \Cref{fig:prop5} for an illustration of these inclusions.

    Now, consider arbitrary $0 < t_1 < \cdots < t_{m} < \infty$, $r\in(0,\infty)$ and $u_1^0,\ldots,u_{m}^0 \in S^{d-1}$ such that $\cap_{j=1}^{m} H^-(u_j^0 , t_j) \subset B_r$.
    Additionally, consider unit vectors $(u_j^n)_{1\leq n ; 1\leq j \leq m}$.
    Observe that if $u_j^n \to u_j^0$ as $n \to\infty$, for any $1\leq j \leq m$, then the intersection $\cap_{j=1}^{m}  H^-(u_j^n , t_j)$, 
    converges to $\cap_{j=1}^{m} H^-(u_j^0 , t_j)$ in Hausdorff distance, as $n \to\infty$. 
    
    Applying this fact to the random elements above and using the almost sure convergences $U^n_j \to U^0_j$, we get that the intersections that appear on both sides of \eqref{eq:2926b} converge (almost surely, in Hausdorff distance) to the intersection on the right-hand side of \eqref{eq:2926a}, as the intersection on the right-hand side of \eqref{eq:2926b} is bounded for large \(n\).
    It follows that \(d_H(Z_0^n,Z_0^0)\to0\) almost surely.
\end{proof}
 
\section{Coupling of stationary Poisson hyperplane processes}\label{s:coupling}

In the preceding section, we established qualitative continuity properties of the zero cell. 
The aim of this and the following section is to seek quantitative stability estimates for the distributions of the zero cells in Wasserstein distance.
To this end, we introduce an explicit coupling of any two stationary Poisson hyperplane processes through a specific coupling of their directional distributions.
This construction allows us to compare the corresponding zero cells on a common probability space and to analyze the joint behaviour of their radial functions.
In particular, we derive formulas for the joint survival function of the coupled radial functions, for the tail distribution of their difference, and for the corresponding moments.

These estimates culminate in \Cref{thm:Wass_p_bnd_new} (\Cref{sec:main_results}), which establishes local Hölder continuity of the zero-cell distribution with respect to perturbations of the directional distribution. 
In particular, the result not only yields convergence when the directional distributions approach one another, but also provides an explicit rate of convergence.

\subsection{Notation and coupling of zero cells} \label{sec:notation_coupling}

Let \((S^{d-1})^2:=S^{d-1}\times S^{d-1}\).
For \(i=1,2\), define the projection maps
\[
p_i : (S^{d-1})^2 \times \mathbb{R} \to S^{d-1}\times\mathbb{R},
\qquad
p_i((u_1,u_2),t)=(u_i,t).
\]

We use the notation \((u_1,u_2,t)\) for an element \(((u_1,u_2),t) \in (S^{d-1})^2 \times \mathbb{R}\).
Let \(\phi_1,\phi_2 \in \mathcal{P}(S^{d-1})\) be nondegenerate even directional distributions, and let
\(\Psi_1\) and \(\Psi_2\) be the stationary Poisson hyperplane processes with common intensity parameter \(\gamma>0\) and directional measures \(\phi_1,\, \phi_2\), respectively.
A coupling of \(\Psi_1\) and \(\Psi_2\) is constructed as follows.

Choose a coupling \(\Phi \in \Pi(\phi_1,\phi_2)\) of the directional distributions, that is, a probability measure
\(\Phi\) on \((S^{d-1})^2\) whose marginals are \(\phi_1\) and \(\phi_2\).
Let \(\Xi\) be the Poisson process on \((S^{d-1})^2 \times \mathbb{R}\) with intensity measure
\(\Phi \otimes \gamma\lambda\).
For \(i=1,2\), define the random counting measure \(\tilde{\Psi}_i\) on \(\mathcal{H}\) by
\[
\tilde{\Psi}_i := \Xi \circ (\mathfrak q\circ p_i)^{-1}.
\]
Equivalently,
\begin{align}
    \tilde{\Psi}_i=\sum_{((u_1,u_2),t)\in \Xi}\delta_{H(u_i,t)}, \qquad i=1,2.
\end{align}
Then \(\tilde{\Psi}_1\) and \(\tilde{\Psi}_2\) are stationary Poisson hyperplane processes with directional distributions
\(\phi_1\) and \(\phi_2\), respectively, and both have intensity parameter \(\gamma\).
Thus, \((\tilde{\Psi}_1,\tilde{\Psi}_2)\) is a coupling of \(\Psi_1\) and \(\Psi_2\).

The coupling \((\tilde{\Psi}_1,\tilde{\Psi}_2)\)  associated with
\(\Phi \in \Pi(\phi_1,\phi_2)\), induces a coupling of the corresponding zero cells.
More precisely, \(\bigl(Z_0(\tilde{\Psi}_1),\,Z_0(\tilde{\Psi}_2)\bigr)\) is a coupling of \(Z_0(\Psi_1)\) and \(Z_0(\Psi_2)\).
To simplify the notation, throughout the remainder of the paper we write
\begin{align}\label{eq:couplingZeroCells}
    Z_0^{(1)}:= Z_0(\tilde{\Psi}_1) \qquad \text{and}  \qquad Z_0^{(2)}:= Z_0(\tilde{\Psi}_2).
\end{align}

In general, a coupling of zero cells need not arise from a coupling of the underlying hyperplane processes in this way.
Nevertheless, throughout this paper we consider only couplings of zero cells induced by couplings of hyperplane processes constructed as above.

\subsection{Joint survival function of coupled radial functions}\label{s:joint_distribution_radial_function}\label{s:radial_coupling}

The purpose of this section is to study the joint survival function of the radial functions of the coupled zero cells $Z_0^{(1)}$ and $Z_0^{(2)}$ as defined in \eqref{eq:couplingZeroCells} in a fixed direction $u \in S^{d-1}$, where the coupling of the zero cells is induced by any fixed coupling \(\Phi \in \Pi(\phi_1,\phi_2)\).
This joint survival function plays a crucial role in deriving upper bounds for Wasserstein metrics.
Recall the definition of the radial function of a convex body from \eqref{eq:radialFunctionDef}.

\begin{proposition}
    \label{prop:jointSurvival}
    Given the coupling $(Z_0^{(1)},Z_0^{(2)})$ defined by \eqref{eq:couplingZeroCells},
    we have, for all $r_1, r_2 > 0$,
    \begin{align*} 
        \mathbb{P}\bigl[\rho_{Z_0^{(1)}}(u) \geq r_1, \rho_{Z_0^{(2)}}(u) \geq r_2\bigr]
        &= \exp( - \gamma F_u(r_1,r_2) ),
    \end{align*}
    where
    \begin{align} \label{def:Fu}
        F_u(r_1,r_2)= F_u^{\Phi}(r_1,r_2)
        := \frac{1}{2} \int_{(S^{d-1})^2} |\langle r_1 u , v_1\rangle|+ |\langle r_2 u, v_2 \rangle| + |\langle r_1 u , v_1\rangle - \langle r_2 u, v_2 \rangle| \dint \Phi(v_1 , v_2) .
    \end{align}
\end{proposition}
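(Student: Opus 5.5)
The plan is to express the joint event as a void probability of the Poisson process $\Xi$ and then compute the intensity of the relevant set.

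First, fix $u$ and $r_1,r_2>0$. Almost surely no hyperplane of $\tilde\Psi_i$ passes through $0$ or through $r_iu$, since these are null sets for the intensity measure. On this event, $\rho_{Z_0^{(i)}}(u)\ge r_i$ holds iff $r_iu\in Z_0^{(i)}$. This in turn holds iff no hyperplane $H(v_i,t)$ of $\tilde\Psi_i$ strictly separates $0$ and $r_iu$. Here I use convexity of $Z_0^{(i)}$, that $0\in Z_0^{(i)}$, and the argument of Proposition \ref{prop:zero-cell-continuity}. A hyperplane $H(v,t)$ strictly separates $0$ and $x$ iff $t$ lies strictly between $0$ and $\langle x,v\rangle$. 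Therefore the joint event is exactly $\{\Xi(A)=0\}$, where
\[
A=\{(v_1,v_2,t): t \text{ strictly between } 0 \text{ and } \langle r_1u,v_1\rangle\}\cup\{(v_1,v_2,t): t \text{ strictly between } 0 \text{ and } \langle r_2u,v_2\rangle\}.
\]
Since $\Xi$ is Poisson with intensity $\Phi\otimes\gamma\lambda$, the probability is $\exp(-\gamma(\Phi\otimes\lambda)(A))$. One point needs care: each point of $\Xi$ generates one hyperplane in each process, so the union is taken in the parameter space $(S^{d-1})^2\times\mathbb{R}$ itself. Working there avoids the quotient by $\pm$ entirely, and no evenness is needed at this step.

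Second, compute $(\Phi\otimes\lambda)(A)$ by integrating over $(v_1,v_2)$ the Lebesgue measure of the union of two intervals, $I_1=(0,a)$ and $I_2=(0,b)$ (oriented), with $a=\langle r_1u,v_1\rangle$ and $b=\langle r_2u,v_2\rangle$. This is the main computational point. I claim the identity
\[
\lambda(I_1\cup I_2)=\tfrac12\bigl(|a|+|b|+|a-b|\bigr).
\]
If $a,b$ have the same sign, the union has length $\max(|a|,|b|)=\frac12(|a|+|b|+||a|-|b||)$, and $|a-b|=||a|-|b||$. If they have opposite signs, the intervals are disjoint, so the length is $|a|+|b|$, and $|a-b|=|a|+|b|$. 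Degenerate cases with $a=0$ or $b=0$ are consistent with both formulas. Integrating against $\Phi$ then gives exactly $F_u(r_1,r_2)$ as in \eqref{def:Fu}.

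The only subtle step is the first: justifying that $\{\rho\ge r\}$ equals the event of no strict separation, with boundary cases (hyperplanes through $r_iu$) being null. This follows as in Proposition \ref{prop:zero-cell-continuity}, because $\Phi\otimes\lambda$ gives zero mass to $\{t=\langle r_iu,v_i\rangle\}$ and to $\{t=0\}$. Finiteness of $F_u$ is immediate since the integrand is bounded by $2(r_1+r_2)$.
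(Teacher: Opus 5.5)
Your proposal is correct and follows essentially the same route as the paper. Both arguments write the joint event as the void event $\{\Xi(A)=0\}$ for a union set in the parameter space $(S^{d-1})^2\times\mathbb{R}$, then use Campbell's formula with a sign case analysis to identify $\frac12\bigl(|a|+|b|+|a-b|\bigr)$ as the length of the union of the two $t$-intervals. The only difference is cosmetic: you use strict separation (open intervals) and explicitly dispose of the boundary null sets, whereas the paper uses hyperplanes meeting the closed segment $[0,ru]$ and leaves that null-set identification implicit.
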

\begin{remark}\label{rem:propFu}
         The proposition implies that for $i = 1, 2$, $\rho_{Z_0^{(i)}}(u)$ is exponentially distributed with rate $\gamma \int_{S^{d-1}} |\langle u , v\rangle|  \dint \phi_i (v) = 2 \gamma h(\Pi_i,u)$, where $\Pi_i$ is the normalized associated zonoid of $\Psi_i$. 
        This fact is well-known; see, for instance, the last paragraph of p.~89 of \cite{hugPoissonHyperplaneTessellations2024}.
\end{remark}
\begin{remark}\label{rem:invariance_of_jointDistribution}
    Let \(\Phi\) be an arbitrary coupling of \(\phi_1\) and \(\phi_2\) and let 
    \begin{align}\label{eq:def_phi_tilde}
        \widetilde{\Phi}:= \frac{1}{2}\bigl(\Phi + \Phi \circ T^{-1}\bigr),
    \end{align}
    be the symmetrized coupling associated to \(\Phi\), where \(\Phi \circ T^{-1}\) is the push-forward measure of \(\Phi\) under the antipodal map 
    \begin{align} \label{eq:def_antipodal}
        T:(v_1,v_2) \mapsto (-v_1,-v_2), \qquad (v_1,v_2) \in (S^{d-1})^2.
    \end{align}
    Observe that \(\widetilde{\Phi}\) is also a coupling of \(\phi_1\) and \(\phi_2\), and \(\widetilde{\Phi}\) is invariant under \(T\).
    Let \((Z_0^{(1)},Z_0^{(2)})\) and \((\widetilde{Z}_0^{(1)},\widetilde{Z}_0^{(2)})\) be the couplings defined by \eqref{eq:couplingZeroCells} using \(\Phi\) and \(\widetilde{\Phi}\) respectively.
    Since the integrand in \eqref{def:Fu} is invariant under \(T\), the integrals of \eqref{def:Fu} taken with respect to \(\Phi\) and \(\widetilde{\Phi}\) are equal, i.e., \(F_u^{\Phi}(r_1,r_2) = F_u^{\widetilde{\Phi}}(r_1,r_2)\).
    Therefore, for all \(r_1,r_2 \geq 0 \),
    \begin{align*}
       \mathbb{P}\bigl[\rho_{Z_0^{(1)}}(u) \geq r_1, \rho_{Z_0^{(2)}}(u) \geq r_2\bigr] 
       = 
       \mathbb{P}\bigl[\rho_{\widetilde{Z}_0^{(1)}}(u) \geq r_1, \rho_{\widetilde{Z}_0^{(2)}}(u) \geq r_2\bigr].
    \end{align*}
\end{remark}

We summarize some elementary properties of the function \(F_u\) in the following remark that will be used in the proofs throughout this section.
\begin{remark}\label{lem:PropertiesF}
 For any $u\in S^{d-1}$, $F_u$ satisfies the following properties:
        \begin{itemize}
            \item[(i)] increasing: $F_u(x_1,y_1) \leq F_u(x_2,y_2)$ for any $0\leq x_1 \leq x_2$ and $0\leq y_1 \leq y_2$;
            \item[(ii)] homogeneous: $F_u(\alpha x,\alpha y) = \alpha F_u(x,y)$, and in particular,
            $$F_u(x,0) = x F_u(1,0) = 2 x h (\Pi_1,u) \text{ and }F_u(0,y) = y F_u(0,1) = 2 y h (\Pi_2,u);$$
            \item[(iii)] convex with respect to each variable: 
            $x'\mapsto F_u(x',y)$ and $y'\mapsto F_u(x,y')$ are convex for all $x,y$;
            \item[(iv)] $F_u(x,y)\geq \frac{1}{2}(F_u(x,0) + F_u(0,y))$ for any nonnegative $x$, $y$;
        \end{itemize}
\end{remark}

\begin{proof}[Proof of Proposition \ref{prop:jointSurvival}]

For $u\in S^{d-1}$ and $r,r_1,r_2>0$ define
\begin{align*}
    A_{u,r}
    & := \{(v,t) \in S^{d-1} \times \mathbb{R} : H(v, t) \cap [0,r u] \neq \emptyset\} , \text{ and }
    \\
    A_{u,r_1,r_2}
    & := \{(v_1, v_2,t) \in (S^{d-1})^2 \times \mathbb{R} : (v_1,t) \in A_{u,r_1} \text{ or } (v_2,t) \in A_{u,r_2} \} ,
\end{align*}
so that we can write
\begin{align} \label{e:jointtail}
    \PP\left(\rho_{Z_0^{(1)}}(u) \geq r_1, \rho_{Z_0^{(2)}}(u) \geq r_2\right)
    &= \PP\left(\Xi\left(A_{u,r_1,r_2}\right) = 0 \right) 
    = \exp\left(-\EE\left[\Xi\left(A_{u,r_1,r_2}\right)\right] \right) .
\end{align}
By Campbell’s formula, the last expectation can be written in the following integral form
\begin{align*}
    \EE\left[\Xi \left(A_{u,r_1,r_2}\right)\right]
    &= \gamma \int_{(S^{d-1})^2} \int_{\R} \mathbf{1}_{(v_1, v_2,t) \in A_{u,r_1,r_2}} \dint t \dint \Phi(v_1 , v_2) .
\end{align*}
Note that, for any vectors $u,v\in S^{d-1}$ and any $t\in\R$, the hyperplane $H(v,t)$ intersects the line spanned by $u$ at a point $s u$ for some $s\in\R$ if and only if $t = s \langle u , v \rangle $.
It follows that for any $r > 0$,
\begin{align*}
    (v,t) \in A_{u,r}
    \Longleftrightarrow 
    \begin{cases}
        0\leq t \leq r \langle u , v \rangle & \text{if } \langle u , v \rangle \geq 0 \\
        0\geq t \geq r \langle u , v \rangle & \text{if } \langle u , v \rangle \leq 0. 
    \end{cases}
\end{align*}
Thus,
\begin{align*}
    (v_1,v_2,t) \in A_{u,r_1,r_2}
    \Longleftrightarrow 
    \begin{cases}
        0\leq t \leq \max ( \langle r_1 u , v_1 \rangle , \langle r_2 u , v_2 \rangle ) & \text{if } 0 \leq \langle u , v_1 \rangle , \langle u , v_2 \rangle \\
        \langle r_1 u , v_1 \rangle \leq t \leq \langle r_2 u , v_2 \rangle & \text{if } \langle u , v_1 \rangle \leq 0 \leq \langle u , v_2 \rangle\\
        \langle r_2 u , v_2 \rangle \leq t \leq \langle r_1 u , v_1 \rangle & \text{if } \langle u , v_2 \rangle \leq 0 \leq \langle u , v_1 \rangle \\
        \min ( \langle r_1 u , v_1 \rangle , \langle r_2 u , v_2 \rangle ) \leq t \leq 0 & \text{if } \langle u , v_1 \rangle , \langle u , v_2 \rangle \leq 0.
    \end{cases}
\end{align*}
Therefore, for any $v_1,v_2\in S^{d-1}$,    
\begin{align*}
    \int_{\mathbb{R}}\mathbf{1}_{A_{u,r_1,r_2}}(v_1,v_2,t) \dint t 
    & =
    \begin{cases}
        |\langle r_1 u, v_1 \rangle| +|\langle r_2u, v_2 \rangle|
        & \text{if } \langle u, v_1 \rangle \langle u, v_2 \rangle < 0
        \\ \max\{|\langle r_1u, v_1 \rangle|,|\langle r_2u, v_2 \rangle|\}
        & \text{if } \langle u, v_1 \rangle \langle u, v_2 \rangle \geq 0
    \end{cases}
    \\&= \frac{1}{2}\left(|\langle r_1 u, v_1 \rangle| +|\langle r_2u, v_2 \rangle| + |\langle r_1 u, v_1 \rangle - \langle r_2u, v_2 \rangle|\right). 
\end{align*}
It follows that
\begin{align*}
    \EE\left[\Xi \left(A_{u,r_1,r_2}\right)\right]
    &= \frac{\gamma}{2} \int_{(S^{d-1})^2} |\langle r_1 u, v_1 \rangle| +|\langle r_2u, v_2 \rangle| + |\langle r_1 u, v_1 \rangle - \langle r_2u, v_2 \rangle| \dint \Phi(v_1 , v_2),
\end{align*}
and substituting this into \Cref{e:jointtail} gives the desired result.
\end{proof}

\subsection{Tail distribution of the radial difference}

In this section, we consider the distribution of the positive part of $\rho_{Z_0^{(1)}}(u) - \rho_{Z_0^{(2)}}(u)$, for \(u \in S^{d-1}\). 
By symmetry (obtained by exchanging the indices \((1)\) and \((2)\)), the negative part can be treated the same way, and thus we will be able to derive moment bounds for $| \rho_{Z_0^{(1)}}(u) - \rho_{Z_0^{(2)}}(u) |$ in the sequel.

\begin{proposition}\label{prop:tailbnd}
   Let \(\Phi\) be a coupling of \(\phi_1\) and \(\phi_2\) which is antipodally symmetric, i.e., \(\Phi\) is invariant under the antipodal map \(T: (v_1,v_2) \mapsto (-v_1,-v_2)\) for all \((v_1,v_2) \in (S^{d-1})^2\).
    Given the coupling $(Z_0^{(1)},Z_0^{(2)})$ defined by \eqref{eq:couplingZeroCells} using \(\Phi\), we have for all $u \in S^{d-1}$ and $t > 0$,
    \begin{align*}
        \mathbb{P}[ \rho_{Z_0^{(1)}}(u)- \rho_{Z_0^{(2)}}(u) \geq t]
        &= 2 \gamma \int_{0}^{\infty} e^{-\gamma F_u(y+t,y)} \int_{(S^{d-1})^2} \langle u,v_2 \rangle   \indic{ 0 < \langle  u , v_2\rangle < \frac{y+t}{t} \langle u,v_2-v_1 \rangle} \dint \Phi (v_1 , v_2) \dint y,
    \end{align*}
    where $F_u(\cdot,\cdot)$ is defined as in \eqref{def:Fu}.
\end{proposition}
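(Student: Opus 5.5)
The plan is to write the tail of $R_1-R_2$, where $R_i:=\rho_{Z_0^{(i)}}(u)$, as an integral over the law of $R_2$, using the joint survival function $G(r_1,r_2):=\exp(-\gamma F_u(r_1,r_2))$ from \Cref{prop:jointSurvival}. Formally,
\[
\PP[R_1-R_2\geq t]=\int_0^\infty \PP[R_1\geq y+t,\ R_2\in \dint y]
= \int_0^\infty \bigl(-\partial_2 G\bigr)(y+t,y)\dint y
= \gamma\int_0^\infty e^{-\gamma F_u(y+t,y)}\,\partial_2 F_u(y+t,y)\dint y .
\]
The proof then has two parts: justify this identity, and compute $\partial_2 F_u$ explicitly using the antipodal symmetry of $\Phi$.

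\emph{Justification of the disintegration.} For fixed $r_1$, the map $r_2\mapsto F_u(r_1,r_2)$ is convex and Lipschitz by \Cref{lem:PropertiesF}, since the integrand in \eqref{def:Fu} is Lipschitz in $r_2$ with constant at most $2$. Hence $G(r_1,\cdot)$ is absolutely continuous and $\PP[R_1\geq r_1, R_2\in B]=\int_B(-\partial_2G)(r_1,y)\dint y$, where the derivative exists for a.e. $y$. To pass to the moving threshold $r_1=y+t$:
\begin{itemize}
\item Partition $[0,\infty)$ into intervals $[y_k,y_{k+1})$.
\item On $\{R_2\in[y_k,y_{k+1})\}$, the event $\{R_1\geq R_2+t\}$ lies between $\{R_1\geq y_{k+1}+t\}$ and $\{R_1\geq y_k+t\}$.
\item This sandwiches $\PP[R_1-R_2\geq t]$ between two Riemann-type sums of $\int(-\partial_2G)(y_k+t,y)\dint y$ over the intervals.
\end{itemize}
Letting the mesh go to zero, continuity and monotonicity of $G$ and $\partial_2 F_u$ in the first variable, together with the integrable exponential decay from property (iv), give the formula by dominated convergence. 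I expect this measure-theoretic step to be the main technical point. The computation below is essentially algebra.

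\emph{Computing $\partial_2F_u$.} Write $a_i=\langle u,v_i\rangle$. Differentiating under the integral in \eqref{def:Fu}, which is legitimate for a.e. $r_2$ because the integrand is Lipschitz and convex, gives
\[
\partial_2F_u(r_1,r_2)=\tfrac12\int \Bigl(|a_2|+\operatorname{sgn}(r_2a_2-r_1a_1)\,a_2\Bigr)\dint\Phi .
\]
The tie set $\{r_2a_2=r_1a_1,\ a_2\neq 0\}$ has positive $\Phi$-mass for at most countably many $r_2$ at fixed $r_1$, and along the diagonal $r_1=y+t$ it is harmless after integration in $y$.

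Split according to the sign of $a_2$:
\begin{itemize}
\item On $\{a_2>0\}$ the integrand equals $a_2\indic{r_2a_2>r_1a_1}$.
\item On $\{a_2<0\}$ it equals $|a_2|\indic{r_2a_2<r_1a_1}$. Applying the antipodal map $T$, under which $\Phi$ is invariant, turns this into the same expression as on $\{a_2>0\}$.
\end{itemize}
Hence
\[
\partial_2F_u(r_1,r_2)=2\int a_2\,\indic{a_2>0,\ r_2a_2>r_1a_1}\dint\Phi .
\]
Finally substitute $r_1=y+t$ and $r_2=y$. The condition $ya_2>(y+t)a_1$ is equivalent to $ta_2<(y+t)(a_2-a_1)$, that is, $a_2<\frac{y+t}{t}\langle u,v_2-v_1\rangle$. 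Inserting this into the displayed formula for $\PP[R_1-R_2\geq t]$ gives exactly the stated expression, including the factor $2\gamma$.
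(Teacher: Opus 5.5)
Your proposal is correct and is essentially the paper's proof. Your density $-\partial_2G(x,y)=\gamma e^{-\gamma F_u(x,y)}\partial_2F_u(x,y)$ is exactly the paper's conditional survival function $g(x,y)$ from \Cref{lem:conditionalSurvival} multiplied by the exponential density of $\rho_{Z_0^{(2)}}(u)$, and your sign split via the antipodal map is \Cref{lem:Fu_derivative}. The only difference is your partition sandwich for the moving threshold $x=y+t$, which the paper skips by substituting $x=y+t$ directly into the conditional survival function; the sandwich is valid because $\partial_2F_u(\cdot,y)$ is non-increasing on $(0,\infty)$, and, since the tie sets $\{y\langle u,v_2\rangle=(y+t)\langle u,v_1\rangle\neq 0\}$ are pairwise disjoint in $y$, it is continuous at $y+t$ for all but countably many $y$.
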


Before providing the full proof of the proposition, we state and prove two intermediate lemmas under the assumptions of \Cref{prop:tailbnd}.
In particular, in \Cref{lem:Fu_derivative}, we work under the antipodal-symmetry assumption on \(\Phi\).
The conclusion of \Cref{lem:conditionalSurvival} remains valid without this assumption.

 To make expressions lighter, we will use the notation \(X:=\rho_{Z_0^{(1)}}(u)\), \(Y :=\rho_{Z_0^{(2)}}(u) \).
The proof will proceed using the observation that
\begin{align} \label{eq:tail_expansion}
    \mathbb{P}[X-Y \geq t]
    &= \int_{0}^{\infty} \mathbb{P} \bigl[X \geq y+t \mid Y = y\bigr] \mathbb{P}_Y(\dint y)
\end{align}
where \(\mathbb{P}_Y\) is the distribution of \(Y\) and \(\mathbb{P}[X \geq x \mid Y=y]\) is the conditional survival function of \(X\) given \(Y=y\). The first lemma provides a closed form expression for this function.

\begin{lemma} \label{lem:conditionalSurvival}
    The conditional survival function of \(X\) given \(Y=y\) is given by 
    \begin{align*}
        \mathbb{P}\bigl[X \geq x \mid Y = y\bigr] 
        = g(x,y) :=
        \begin{cases}
            \frac{\partial_y F_u(x,y)}{F_u(0,1)} e^{- \gamma(F_u(x,y) - y F_u(0,1))}
            & \text{if $y'\mapsto F_u(x,y')$ is differentiable at $y$},
            \\0 & \text{otherwise}.
        \end{cases}
    \end{align*}
    In particular, this means that for any $x\geq 0$ and any Borel set $B\subset [0,\infty)$, we have
         \begin{align} \label{eq:ToProve}
              \mathbb{P}[X \geq x, Y \in B]
              & = \int_{B} g(x,y) \, \mathbb{P}_Y(\dint y).
         \end{align}
\end{lemma}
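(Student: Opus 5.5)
We plan to derive \eqref{eq:ToProve} directly from the joint survival function in \Cref{prop:jointSurvival}. By \Cref{rem:propFu}, $Y$ is exponential with rate $c:=\gamma F_u(0,1)=2\gamma h(\Pi_2,u)>0$, with density $c e^{-cy}$ on $(0,\infty)$. Write $G(x,y):=\mathbb{P}[X\geq x, Y\geq y]=e^{-\gamma F_u(x,y)}$. For fixed $x\geq 0$, the map $y\mapsto G(x,y)$ is nonincreasing. By \Cref{lem:PropertiesF}(iii), $y\mapsto F_u(x,y)$ is convex and finite on $[0,\infty)$, so it is locally Lipschitz on $(0,\infty)$ and differentiable at all but countably many $y$. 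Hence $y\mapsto G(x,y)$ is locally absolutely continuous on $(0,\infty)$.

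The main identity is then obtained by differentiating in $y$. For $0<a<b$,
\[
\mathbb{P}[X\geq x,\ a\leq Y<b] = G(x,a)-G(x,b) = \int_a^b \gamma\,\partial_y F_u(x,y)\, e^{-\gamma F_u(x,y)}\dint y .
\]
Here $\partial_y F_u$ exists Lebesgue-a.e., and the exceptional countable set is both Lebesgue-null and $\mathbb{P}_Y$-null because $\mathbb{P}_Y$ is absolutely continuous. This is why setting $g=0$ off the differentiability set is harmless. Rewriting $\dint y = \mathbb{P}_Y(\dint y)/(c e^{-cy})$ turns the integrand into exactly $g(x,y)$:
\[
\frac{\gamma\,\partial_y F_u(x,y)\,e^{-\gamma F_u(x,y)}}{\gamma F_u(0,1)\,e^{-\gamma y F_u(0,1)}} = g(x,y).
\]
So \eqref{eq:ToProve} holds for every interval $B=[a,b)\subset(0,\infty)$.

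It remains to extend this from intervals to all Borel sets. Both sides of \eqref{eq:ToProve} are finite measures in $B$: the left side is dominated by $\mathbb{P}_Y$, and the right side is finite once we check $g\geq 0$ and integrability. Nonnegativity follows from monotonicity, \Cref{lem:PropertiesF}(i). Integrability of the right side over $(0,\infty)$ follows from the fact that its value on intervals increases to $G(x,0)-\lim_{b\to\infty}G(x,b)\leq 1$, using monotone convergence. The intervals $[a,b)$ form a $\pi$-system generating the Borel sets of $(0,\infty)$, so the two measures agree on all Borel $B\subset(0,\infty)$. The point $\{0\}$ has $\mathbb{P}_Y$-mass zero, so both sides vanish there. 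Finally, since $g$ is Borel measurable and satisfies \eqref{eq:ToProve}, it is a version of the conditional survival function $\mathbb{P}[X\geq x\mid Y=y]$, by the defining property of regular conditional probabilities.

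The main technical point is justifying the fundamental theorem of calculus for $y\mapsto e^{-\gamma F_u(x,y)}$, including measurability of $g$. We handle it through convexity: convexity gives local Lipschitz continuity and hence absolute continuity; $\partial_y F_u$ is a Borel function, being a limit of difference quotients; and the set where it fails to exist is Borel and countable. As an alternative, one could compute $\partial_y F_u$ explicitly by differentiating under the integral in \eqref{def:Fu}. The integrand is Lipschitz in $y$ with constant $2|\langle u,v_2\rangle|$, so dominated convergence applies wherever $\Phi$ puts no mass on the kink set $\{\langle x u, v_1\rangle = \langle y u, v_2\rangle\}$. The convexity route avoids having to identify that kink set.
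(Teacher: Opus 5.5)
Your proposal is correct and follows the paper's proof. Convexity of $y'\mapsto F_u(x,y')$ gives differentiability off a countable, $\mathbb{P}_Y$-null set, and writing $\mathbb{P}_Y(\dint y)$ as the exponential density turns $\int g\,\dint\mathbb{P}_Y$ into $\int \gamma\,\partial_y F_u(x,y)\,e^{-\gamma F_u(x,y)}\dint y$, which integrates to the joint survival function of \Cref{prop:jointSurvival}. The differences are cosmetic: the paper checks half-lines $[y',\infty)$ (implicitly using $e^{-\gamma F_u(x,y)}\to 0$ as $y\to\infty$) rather than intervals $[a,b)$. It also leaves implicit the absolute-continuity, measurability and $\pi$-system details that you spell out.
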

\begin{proof}
    Let $x\geq 0$.
    It is enough to establish \eqref{eq:ToProve} for sets $B=[y',\infty)$ with $y' > 0$.
    Since \(y' \mapsto F_u(x,y')\) is convex, the set of points where it is not differentiable is countable.
    Moreover \(\mathbb{P}_Y\) is absolutely continuous with respect to the Lebesgue measure (\Cref{rem:propFu} tells us it is exponentially distributed with rate $\gamma F_u(0,1)$) and therefore $y'\mapsto F_u(x,y')$ is differentiable $\mathbb{P}_Y$-almost-everywhere.
    Therefore,
    \begin{align*}
        \int_{y'}^{\infty}g(x,y) \, \mathbb{P}_Y(dy)  
        &= \int_{y'}^{\infty} e^{-\gamma(F_u(x,y) - yF_u(0,1))} 
            \frac{\partial_y F_u(x,y)}{F_u(0,1)} 
            \gamma F_u(0,1) e^{- \gamma F_u(0,1) y} \, \dint y \nonumber \\
        &= \int_{y'}^{\infty} e^{- \gamma F_u(x,y)} \gamma \partial_y F_u(x,y) \, \dint y \nonumber \\
        &=e^{-\gamma F_u(x,y')} = \mathbb{P}[X \geq x, Y \geq y'],
    \end{align*}
    where the last equality follows from \Cref{prop:jointSurvival}.
\end{proof}

\begin{lemma}\label{lem:Fu_derivative}
    Let $x\geq 0$ and $u\in S^{d-1}$.
    The map $y'\mapsto F_u(x,y')$ is differentiable at almost every $y'> 0$, in which case it satisfies
    \begin{align*}
        \partial_y\big|_{y=y'}F_u(x,y) 
        &=2  \int_{(S^{d-1})^2}  \langle u,v_2 \rangle  \indic{\langle x u , v_1\rangle < \langle y' u , v_2\rangle}  \indic{\langle u,v_2 \rangle > 0} \dint \Phi(v_1 , v_2).
    \end{align*}
\end{lemma}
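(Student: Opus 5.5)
Since $y'\mapsto F_u(x,y')$ is convex (\Cref{lem:PropertiesF}(iii)), it is differentiable at all but countably many $y'>0$, so the first claim is immediate. For the formula, the plan is to differentiate under the integral sign in \eqref{def:Fu}. Write $a=a(v_1)=\langle xu,v_1\rangle$ and $b=b(v_2)=\langle u,v_2\rangle$, so the integrand is
\[
f(v_1,v_2,y')=\tfrac12\bigl(|a|+y'|b|+|a-y'b|\bigr).
\]
For fixed $(v_1,v_2)$ this is convex and Lipschitz in $y'$ with constant at most $|b|\le 1$. Its derivative exists whenever $a\neq y'b$ or $b=0$, and equals
\[
\tfrac12\bigl(|b|-b\,\operatorname{sgn}(a-y'b)\bigr).
\]
Because the difference quotients are bounded by $1$, dominated convergence shows that the one-sided derivatives of $F_u(x,\cdot)$ at $y'$ are the integrals of the one-sided derivatives of $f$.

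\textbf{Step 1: identify the exceptional set.} The two one-sided derivatives of $f$ differ only on the set $E_{y'}=\{a=y'b,\ b\neq 0\}$. So $F_u(x,\cdot)$ is differentiable at $y'$ if and only if $\Phi(E_{y'})=0$. For distinct $y'$ the sets $E_{y'}$ are disjoint, since $b\neq0$ fixes $y'=a/b$. Hence $\Phi(E_{y'})>0$ for at most countably many $y'$, which recovers the almost-everywhere statement directly.

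\textbf{Step 2: evaluate the derivative off $E_{y'}$.} Up to a $\Phi$-null set, the derivative is
\[
\int \tfrac12\bigl(|b|-b\,\operatorname{sgn}(a-y'b)\bigr)\,d\Phi .
\]
Splitting into the cases $b>0$ and $b<0$, the integrand equals
\[
b\,\mathbf 1\{a<y'b,\ b>0\}+|b|\,\mathbf 1\{a>y'b,\ b<0\}.
\]

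\textbf{Step 3: use antipodal symmetry.} This is the main step and the one needing care. Apply the invariance of $\Phi$ under $T$. Under $(v_1,v_2)\mapsto(-v_1,-v_2)$ we have $a\mapsto -a$ and $b\mapsto -b$, so the second term becomes $b\,\mathbf 1\{a<y'b,\ b>0\}$, which is the same as the first. Summing the two gives
\[
2\int \langle u,v_2\rangle\,\mathbf 1\{\langle xu,v_1\rangle<\langle y'u,v_2\rangle\}\,\mathbf 1\{\langle u,v_2\rangle>0\}\,d\Phi,
\]
as claimed. The only subtlety is that strict versus non-strict inequalities differ by the set $E_{y'}$, which is $\Phi$-null exactly at the differentiability points.
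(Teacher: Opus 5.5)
Your proposal is correct and follows essentially the same route as the paper: dominated convergence for the one-sided derivatives, differentiability exactly when the tie set $\{\langle xu,v_1\rangle=\langle y'u,v_2\rangle,\ \langle u,v_2\rangle\neq 0\}$ is $\Phi$-null, and then antipodal invariance of $\Phi$ to fold the $\langle u,v_2\rangle<0$ contribution onto the $\langle u,v_2\rangle>0$ one. That invariance is the standing hypothesis of Proposition~\ref{prop:tailbnd}, which you rightly invoke even though the lemma statement does not repeat it, and your disjointness argument that only countably many $y'$ are exceptional is a small extra, since the paper gets the almost-everywhere claim from convexity alone.
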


\begin{proof}
    The convexity of $y'\mapsto F_u(x,y')$ justifies the almost everywhere differentiability.
    It also implies that \(\partial_{y}^+\big|_{y=y'}F_u(x,y)\) and \(\partial_{y}^-\big|_{y=y'}F_u(x,y)\) are always defined.
    We will first establish expressions for these one-sided derivatives.
    Recall the expression
    \begin{equation*}
        F_u(x,y) = \frac{1}{2} \int_{(S^{d-1})^2} \bigl( |\langle x u , v_1\rangle|+ |\langle y u, v_2 \rangle| + |\langle x u , v_1\rangle - \langle y u, v_2 \rangle| \bigr)  \dint \Phi(v_1 , v_2).
    \end{equation*}
    Observe that 
    \begin{align*}
        \partial_{y}^+\big|_{y=y'}F_u(x,y) &= \frac{F_u(0,1)}{2} + 
        \lim_{h \downarrow 0}\int_{(S^{d-1})^2}\frac{|\langle x u , v_1\rangle - \langle (y'+h) u, v_2 \rangle|- |\langle x u , v_1\rangle - \langle y' u, v_2 \rangle|}{2h} \dint \Phi(v_1,v_2).
    \end{align*}
    Note that the integrand on the right is uniformly bounded by \(\frac{|\langle u,v_2 \rangle|}{2}\) which is integrable.
    Thus we can apply the dominated convergence theorem.
    Observe also that, for any $v_1,v_2\in S^{d-1}$,
    \begin{align*}
        \lim_{h \downarrow 0} \frac{|\langle x u , v_1\rangle - \langle (y'+h) u, v_2 \rangle|- |\langle x u , v_1\rangle - \langle y' u, v_2 \rangle|}{h}
        & =
        \begin{cases}
            -\langle u , v_2 \rangle & \text{if } \langle x u , v_1\rangle > \langle y' u, v_2 \rangle, \\
            |\langle u , v_2 \rangle| & \text{if } \langle x u , v_1\rangle = \langle y' u, v_2 \rangle, \\
            \langle u , v_2 \rangle & \text{if } \langle x u , v_1\rangle < \langle y' u, v_2 \rangle . 
        \end{cases}
    \end{align*}
    Hence we get 
    \begin{align*}
        \partial_{y}^+\big|_{y=y'}F_u(x,y)
        = \frac{F_u(0,1)}{2} &+ \int_{\{ \langle x u , v_1\rangle \neq \langle y' u, v_2 \rangle \}}  \mathrm{sgn}(\langle y'u,v_2 \rangle - \langle xu,v_1 \rangle) \frac{\langle u,v_2 \rangle }{2} \dint \Phi(v_1,v_2) 
        \\&+ \int_{\{ \langle x u , v_1\rangle = \langle y' u, v_2 \rangle \}} \frac{|\langle u,v_2 \rangle|}{2} \dint \Phi(v_1,v_2).
    \end{align*}
    Similarly, we obtain
    \begin{align*}
        \partial_{y}^-\big|_{y=y'}F_u(x,y)
        = \frac{F_u(0,1)}{2} &+ \int_{\{ \langle x u , v_1\rangle \neq \langle y' u, v_2 \rangle \}}  \mathrm{sgn}(\langle y'u,v_2 \rangle - \langle xu,v_1 \rangle) \frac{\langle u,v_2 \rangle }{2} \dint \Phi(v_1,v_2) 
        \\&- \int_{\{ \langle x u , v_1\rangle = \langle y' u, v_2 \rangle \}} \frac{|\langle u,v_2 \rangle|}{2} \dint \Phi(v_1,v_2).
    \end{align*}
    Therefore, the partial derivative \(\partial_{y}\big|_{y=y'}F_u(x,y)\) at \(y'\) exists if and only if
    \begin{align}\label{eq:assumption_partial_derivative}
        \int_{\{ \langle x u , v_1\rangle = \langle y' u, v_2 \rangle \}} |\langle u, v_2 \rangle| \dint \Phi(v_1,v_2) = 0,
    \end{align}
    in which case it can be written
    \begin{align*}
        \partial_{y} \big|_{y=y'}F_u(x,y)
        &= \frac{F_u(0,1)}{2} + \int_{(S^{d-1})^2}  \mathrm{sgn}(\langle y'u,v_2 \rangle - \langle xu,v_1 \rangle) \frac{\langle u,v_2 \rangle }{2} \dint \Phi(v_1,v_2) 
        \\&= \frac{1}{2}\int_{(S^{d-1})^2} \left( \mathrm{sgn}(\langle y'u,v_2 \rangle - \langle xu,v_1 \rangle) \mathrm{sgn}(\langle u,v_2 \rangle) + 1 \right) | \langle u,v_2 \rangle | \, \dint \Phi(v_1,v_2) ,
    \end{align*}
    where the last equality follows from $F_u(0,1) =  \int_{(S^{d-1})^2} |\langle u, v_2 \rangle| \dint \Phi(v_1 , v_2) $.
    Since the integrand is zero when $\langle u,v_2 \rangle = 0$ and the integrand is invariant under \(T\), and since $\Phi$ is invariant under \(T\) by the hypothesis of \Cref{prop:tailbnd},
    multiplying the integrand by $2\, \mathbf{1}\{\langle u,v_2 \rangle >0\}$ does not change the value of the integral.
    That is,
    \begin{align*}
        \partial_{y} \big|_{y=y'}F_u(x,y)
        &= \int_{(S^{d-1})^2} \indic{\langle u,v_2 \rangle >0} \left( \mathrm{sgn}(\langle y'u,v_2 \rangle - \langle xu,v_1 \rangle) \mathrm{sgn}(\langle u,v_2 \rangle) + 1 \right)  | \langle u,v_2 \rangle | \, \dint \Phi(v_1,v_2) \\ 
        &= 2 \int_{(S^{d-1})^2} \indic{\langle u,v_2 \rangle >0} \indic{\langle x u, v_1 \rangle < \langle y' u , v_2\rangle} \langle u,v_2 \rangle \, \dint \Phi(v_1,v_2). 
    \end{align*}
\end{proof}

We are now in a position to prove \Cref{prop:tailbnd}.
\begin{proof}[Proof of \Cref{prop:tailbnd}]
    From Lemma \ref{lem:conditionalSurvival}, Lemma \ref{lem:Fu_derivative}, and the exponential distribution of $Y$, for any $x\geq 0$ and almost every $y\geq 0$, we get
    \begin{align*}
        &\mathbb{P}\bigl[X \geq x \mid Y = y\bigr] \frac{\mathbb{P}_Y(\dint y)}{\dint y}
        \\&= e^{-\gamma (F_u(x,y)-y F_u(0,1))}\frac{2  \int_{(S^{d-1})^2} \langle u,v_2 \rangle \indic{x\langle  u , v_1\rangle < y \langle  u , v_2\rangle}  \indic{\langle u,v_2 \rangle > 0} \dint \Phi (v_1 , v_2)}{F_u(0,1)} \gamma e^{-\gamma y F_u(0,1)} F_u(0,1) 
        \\&= 2 \gamma e^{-\gamma F_u(x,y)} \int_{(S^{d-1})^2} \langle u,v_2 \rangle  \indic{x\langle  u , v_1\rangle < y \langle  u , v_2\rangle}  \indic{\langle u,v_2 \rangle > 0} \dint \Phi (v_1 , v_2) . 
    \end{align*}    
    Therefore,
    \begin{align}
        \mathbb{P}(X-Y\geq t) \notag
        &= \int_0^\infty \mathbb{P}\left[X \geq y+t \mid Y=y\right] \mathbb{P}_Y (\dint y)
        \\&= 2 \gamma \int_0^\infty e^{-\gamma F_u(y+t,y)} \int_{(S^{d-1})^2} \langle u,v_2 \rangle  \indic{(y+t)\langle  u , v_1\rangle < y \langle  u , v_2\rangle}  \indic{\langle u,v_2 \rangle > 0} \dint \Phi (v_1 , v_2)  \dint y . \label{eq:08102025}
    \end{align}
    Finally, note that, by adding $t\langle  u , v_2\rangle - (y+t)\langle  u , v_1\rangle$ and dividing by $t$ both sides of the inequality, we have the equivalence
    \[ (y+t)\langle  u , v_1\rangle < y \langle  u , v_2\rangle
    \Leftrightarrow \langle  u , v_2\rangle < \frac{y+t}{t} \langle  u , v_2-v_1\rangle \]
    for any $v_1,v_2\in S^{d-1}$ and $y,t> 0$.
    Hence \Cref{prop:tailbnd} follows immediately from \eqref{eq:08102025}.
\end{proof}

\subsection{Moments of the radial difference}

Using the tail bound in the previous section and the formula $\mathbb{E} X = \int_0^\infty \mathbb{P}(X\geq t) \dint t$, we will now obtain moment bounds for the difference between the radial functions.

\begin{proposition}\label{thm:int_bnd_p}
    Let \(\Phi\) be an arbitrary coupling of \(\phi_1\) and \(\phi_2\), and let $p\geq 1$.
    Given the coupling $(Z_0^{(1)},Z_0^{(2)})$ defined by \eqref{eq:couplingZeroCells} using \(\Phi\),
    we have
    \begin{align}\label{eq:moments_radial}
        \int_{S^{d-1}} \mathbb{E}  (\rho_{Z_0^{(1)}}(u) - \rho_{Z_0^{(2)}}(u))^p_+ \sigma(\dint u)
        &\leq
        \frac{\EE_{\Phi}[\|V_2 - V_1\|]}{\gamma^p}  \int_{S^{d-1}} \frac{\Gamma(p+1)}{(h(\Pi_1,u) + h(\Pi_2,u)) h(\Pi_1,u)^p}  \sigma(\dint u). 
    \end{align}
\end{proposition}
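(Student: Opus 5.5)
First I reduce to an antipodally symmetric coupling. By \Cref{rem:invariance_of_jointDistribution}, replacing \(\Phi\) by its symmetrization \(\widetilde\Phi\) leaves the joint law of \((\rho_{Z_0^{(1)}}(u),\rho_{Z_0^{(2)}}(u))\) unchanged for every \(u\). It therefore leaves the left-hand side of \eqref{eq:moments_radial} unchanged. The right-hand side is also unchanged, because \(\|v_2-v_1\|\) is \(T\)-invariant, so \(\EE_{\widetilde\Phi}\|V_2-V_1\| = \EE_\Phi\|V_2-V_1\|\). We may thus assume \(\Phi\) is \(T\)-invariant and apply \Cref{prop:tailbnd}.

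Fix \(u\) and write \(X,Y\) as before. We have
\[
\EE(X-Y)_+^p=\int_0^\infty p t^{p-1}\,\PP[X-Y\ge t]\dint t .
\]
We insert the formula of \Cref{prop:tailbnd} and bound the exponential factor using \Cref{lem:PropertiesF}(iv):
\[
F_u(y+t,y)\ge \tfrac12\bigl(F_u(y+t,0)+F_u(0,y)\bigr)=(y+t)h(\Pi_1,u)+y\,h(\Pi_2,u).
\]
Hence \(e^{-\gamma F_u(y+t,y)}\le e^{-\gamma t h_1}e^{-\gamma y(h_1+h_2)}\), where \(h_i=h(\Pi_i,u)\).

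Next we handle the indicator. The condition \(0<\langle u,v_2\rangle<\frac{y+t}{t}\langle u,v_2-v_1\rangle\) forces \(\langle u,v_2-v_1\rangle>0\). On that event we bound \(\langle u,v_2\rangle\indic{\cdot}\le \frac{y+t}{t}\langle u,v_2-v_1\rangle_+\). Applying Tonelli, the \(t\)- and \(y\)-integrals become
\[
2\gamma\int_0^\infty\!\!\int_0^\infty p t^{p-1}\,\frac{y+t}{t}\,e^{-\gamma t h_1}e^{-\gamma y(h_1+h_2)}\dint y\dint t\;\cdot\int\langle u,v_2-v_1\rangle_+\dint\Phi .
\]

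The hard part is that the factor \(\frac{y+t}{t}\) produces a term \(p t^{p-2}y\) whose integral near \(t=0\) may be problematic when \(p<1\). For \(p\ge1\) it is finite, but the resulting constants are not exactly \(\Gamma(p+1)/((h_1+h_2)h_1^p)\). I would therefore first try to avoid the crude bound and use a sharper one. For fixed \((v_1,v_2)\) with \(a=\langle u,v_2\rangle>0\) and \(b=\langle u,v_2-v_1\rangle>0\), the indicator says \(y>t(a-b)/b\). If \(a\le b\) this holds for all \(y\), giving the contribution \(a\int e^{-\gamma y(h_1+h_2)}\dint y=a/(\gamma(h_1+h_2))\le b/(\gamma(h_1+h_2))\). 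If \(a>b\), integrating over \(y>t(a-b)/b\) gives
\[
\frac{a}{\gamma(h_1+h_2)}\,e^{-\gamma t(h_1+h_2)(a-b)/b}.
\]
For this case I would aim to show that integrating \(p t^{p-1}e^{-\gamma t h_1}\) against this factor gives at most \(b/(\gamma(h_1+h_2))\cdot\Gamma(p+1)/(\gamma h_1)^p\). This needs an inequality of the form \(a e^{-s(a-b)/b}\le b\) after absorbing the \(t\)-integral, possibly using \(a\le1\) and a Laplace-transform comparison. In the worst case one simply drops the exponential and uses \(a\le b\). I expect this step to be the delicate part of the argument.

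Once the bound \(\EE(X-Y)_+^p\le 2\Gamma(p+1)\,\gamma^{-p}\int\langle u,v_2-v_1\rangle_+\dint\Phi/\bigl((h_1+h_2)h_1^p\bigr)\) is established, we use antipodal symmetry: \(2\int\langle u,v_2-v_1\rangle_+\dint\Phi=\int|\langle u,v_2-v_1\rangle|\dint\Phi\le\EE_\Phi\|V_2-V_1\|\) by Cauchy–Schwarz. Integrating over \(u\) against \(\sigma\) then yields \eqref{eq:moments_radial}.
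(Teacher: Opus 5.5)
Your architecture is sound and differs from the paper's route. The paper substitutes $y=t(z-1)$ and uses the exact homogeneity $F_u(tz,t(z-1))=tF_u(z,z-1)$ to integrate out $t$ first; only afterwards does it apply \Cref{lem:PropertiesF}(iv) to $F_u(z,z-1)$. You apply (iv) first, which factorizes the exponential, and then integrate explicitly in $y$ and then in $t$. Several of your steps are correct: the reduction to a $T$-invariant $\Phi$, the split according to $a=\langle u,v_2\rangle$ versus $b=\langle u,v_2-v_1\rangle$, the case $a\le b$, and the closing step $2\int b_+\dint\Phi=\int|b|\dint\Phi\le\EE_\Phi\|V_2-V_1\|$.

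The genuine gap is the case $a>b>0$. This is exactly where a bound proportional to $b$, and hence to $\|v_2-v_1\|$, must be extracted, and you leave it unproven. Neither mechanism you suggest works:
\begin{itemize}
\item The pointwise inequality $a e^{-s(a-b)/b}\le b$ is false for small $s$, since the left side tends to $a>b$.
\item ``Dropping the exponential and using $a\le b$'' contradicts the case hypothesis $a>b$. Dropping the exponential leaves a contribution proportional to $a$ rather than $b$, which is of order one rather than of order $\EE_\Phi\|V_2-V_1\|$ when $\Phi$ is concentrated near the diagonal.
\end{itemize}

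The missing step is to carry out the $t$-integral exactly and compare the resulting power. With $r=a/b>1$,
\[
\int_0^\infty p t^{p-1}e^{-\gamma t h_1}e^{-\gamma t(h_1+h_2)(r-1)}\dint t=\frac{\Gamma(p+1)}{\gamma^p\bigl(h_1+(h_1+h_2)(r-1)\bigr)^p}\le\frac{\Gamma(p+1)}{\gamma^p r^p h_1^p}.
\]
The contribution is therefore at most
\[
\frac{a}{\gamma(h_1+h_2)}\cdot\frac{\Gamma(p+1)}{\gamma^p r^p h_1^p}=\frac{b\,r^{1-p}\,\Gamma(p+1)}{\gamma^{p+1}(h_1+h_2)h_1^p}\le\frac{b\,\Gamma(p+1)}{\gamma^{p+1}(h_1+h_2)h_1^p}.
\]
This last inequality is precisely where $p\ge 1$ is needed. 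It plays the role of the paper's step $\min\{1,(b/a)^p\}\le\min\{1,b/a\}$; integrability near $t=0$ is not where $p\ge1$ enters.

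Your crude bound also does not stay finite for all $p\ge1$: at $p=1$ it diverges, because $\int_0 t^{-1}\dint t=\infty$. With the computation above inserted, you obtain $\EE(X-Y)_+^p\le 2\Gamma(p+1)\gamma^{-p}\int b_+\dint\Phi/\bigl((h_1+h_2)h_1^p\bigr)$, and the rest of your argument yields exactly the paper's constant.
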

\begin{proof}
    Let \(\tilde{\Phi}\) be the symmetrized coupling associated to \(\Phi\) defined by \eqref{eq:def_phi_tilde}.
    By \Cref{rem:invariance_of_jointDistribution}, for every \(u \in S^{d-1}\), the radial pairs induced by \(\Phi\) and \(\widetilde{\Phi}\) have the same joint distribution.
    Consequently, the expectations appearing on the left-hand side of \eqref{eq:moments_radial} taken with respect to \(\Phi\) and \(\widetilde{\Phi}\) are equal for every fixed \(u \in S^{d-1}\).
    Also, the expectation on the right-hand side of \eqref{eq:moments_radial} satisfies \(\mathbb{E}_{\Phi}[\|V_1-V_2\|] = \mathbb{E}_{\widetilde{\Phi}}[\|V_1-V_2\|]\).
    Therefore, without loss of generality, we assume that \(\Phi\) is invariant under the antipodal map \(T\) (defined by \eqref{eq:def_antipodal}) and use \Cref{prop:tailbnd} to obtain \eqref{eq:moments_radial}.

    First recall that for any non-negative random variable $X$, $$\mathbb{E} X^p = \int_0^\infty \mathbb{P}(X^p\geq s) \dint s = \int_0^\infty \mathbb{P}(X\geq t) p t^{p-1}\dint t.$$ By  \Cref{prop:tailbnd}, we then have for any $u\in S^{d-1}$, 
    \begin{align*}
        &\mathbb{E}  (\rho_{Z_0^{(1)}}(u) - \rho_{Z_0^{(2)}}(u))^p_+
        \\&= \int_0^\infty  2 \gamma \int_{0}^{\infty} e^{-\gamma F_u(y+t,y)} \int_{(S^{d-1})^2} \langle u,v_2 \rangle \indic{ 0 < \langle  u , v_2\rangle < \frac{y+t}{t} \langle u,v_2-v_1 \rangle} \dint \Phi (v_1 , v_2) \dint y \,  p t^{p-1} \dint t\\
        &= 2 p \gamma \int_1^\infty \left(\int_{0}^{\infty} t^p e^{-\gamma F_u(tz,t(z-1))} \dint t \right) \int_{(S^{d-1})^2} \langle u,v_2 \rangle \indic{ 0 < \langle  u , v_2\rangle < z \langle u,v_2-v_1 \rangle} \dint \Phi (v_1 , v_2) \dint z,
    \end{align*}
    where the second equality follows from the substitution $y=t(z-1)$ and Fubini's theorem.
    Using the homogeneity property of $F_u$ (\Cref{lem:PropertiesF}), and a substitution $s = \gamma F_u(z,z-1) t$, we find that the integral with respect to $t$ evaluates to 
    \begin{align*}
        \int_{0}^{\infty} t^p e^{-\gamma t F_u(z,z-1)} \dint t
        &= \frac{1}{(\gamma F_u(z,z-1))^{p+1}} \int_{0}^{\infty} s^p e^{-s} \dint s
        =\frac{\Gamma(p+1)}{(\gamma F_u(z,z-1))^{p+1}}, 
    \end{align*}
    and thus
    \begin{align*}
        &\mathbb{E} (\rho_{Z_0^{(1)}}(u) - \rho_{Z_0^{(2)}}(u))_+^p\\
        &= \frac{2 p \Gamma(p+1)}{\gamma^p} \int_1^\infty \frac{1}{F_u(z,z-1)^{p+1}} \left( \int_{(S^{d-1})^2} \langle u , v_2 \rangle \indic{0 < \langle u , v_2 \rangle < z \langle u , v_2 - v_1 \rangle } \dint \Phi(v_1,v_2) \right) \dint z.
    \end{align*}    
    By Fubini's theorem, we thus have
    \begin{align*}
        &\int_{S^{d-1}} \mathbb{E}  (\rho_{Z_0^{(1)}}(u) - \rho_{Z_0^{(2)}}(u))^p_+ \sigma(\dint u)\\
        &= \frac{2 p \Gamma(p+1)}{\gamma^p} \int_{S^{d-1}} \int_{(S^{d-1})^2}  \left(  \int_1^\infty \frac{1}{F_u(z,z-1)^{p+1}} \indic{\langle u , v_2 \rangle < z \langle u , v_2 - v_1 \rangle } \dint z \right) (\langle u , v_2 \rangle)_+ \dint \Phi(v_1,v_2) \sigma(\dint u). 
    \end{align*}
    Using the properties of $F_u$ summarized in \Cref{lem:PropertiesF}, we observe that
    \begin{align*} 
        F_u(z,z-1) 
        &\geq \frac{F_u(z,0)+F_u(0,z-1)}{2}
        = \frac{z}{2}(F_u(1,0) + F_u(0,1)) - \frac{1}{2}F_u(0,1) ,
    \end{align*}
    from which we get
    \begin{align*}
       &\int_1^\infty \frac{1}{F_u(z,z-1)^{p+1}} \indic{\langle u , v_2 \rangle < z \langle u , v_2 - v_1 \rangle } \dint z \\
       &\leq  \int_1^\infty \frac{2^{p+1}}{\left[z(F_u(1,0) + F_u(0,1)) - F_u(0,1)\right]^{p+1}}  \indic{\langle u , v_2 \rangle < z \langle u , v_2 - v_1 \rangle }  \dint z 
       \\&= \frac{2^{p+1}}{F_u(1,0) + F_u(0,1)}\int_{F_u(1,0)}^\infty \omega^{-p-1}  \indic{ \langle u , v_2 \rangle < \frac{\left(\omega + F_u(0,1)\right)}{F_u(1,0) + F_u(0,1)} \langle u , v_2 - v_1 \rangle} \dint \omega , 
    \end{align*}
    where the equality follows from the change of variable $\omega = z(F_u(1,0) + F_u(0,1)) - F_u(0,1)$.
    Evaluating the last integral we see that
    \begin{align*}
        &\int_{F_u(1,0)}^\infty \omega^{-p-1}  \indic{ \langle u , v_2 \rangle < \frac{\left(\omega + F_u(0,1)\right)}{F_u(1,0) + F_u(0,1)} \langle u , v_2 - v_1 \rangle }  \dint \omega \\
        &= \frac{1}{p}\left( \max \left\{F_u(1,0),   \frac{\langle u , v_2 \rangle}{\langle u , v_2 - v_1 \rangle}(F_u(1,0) + F_u(0,1)) - F_u(0,1) \right\}\right)^{-p}  \\
        &\leq \frac{1}{p} \left( \max \left\{ 1 , \frac{\langle u , v_2 \rangle}{\langle u , v_2 - v_1 \rangle} \right\} F_u(1,0) \right)^{-p} \\
        &= \frac{1}{p F_u(1,0)^p} \min \left\{ 1 , \left(\frac{\langle u , v_2 - v_1 \rangle}{\langle u , v_2 \rangle}\right)^p_+ \right\}\\
        &\leq \frac{1}{p F_u(1,0)^p} \min \left\{ 1 , \left(\frac{\langle u , v_2 - v_1 \rangle}{\langle u , v_2 \rangle}\right)_+ \right\}\\
        &\leq \frac{1}{p F_u(1,0)^p} \frac{\| v_2 - v_1 \|}{(\langle u , v_2 \rangle)_+}. 
    \end{align*}
    The first inequality above follows in the case $\frac{\langle u , v_2 \rangle}{\langle u , v_2 - v_1 \rangle} \geq 1$ by the fact that $x \mapsto x^{-p}$ is decreasing. For the case $\frac{\langle u , v_2 \rangle}{\langle u , v_2 - v_1 \rangle} < 1$ we also have
    \begin{align*}
        \max \left\{F_u(1,0),   \frac{\langle u , v_2 \rangle}{\langle u , v_2 - v_1 \rangle}(F_u(1,0) + F_u(0,1)) - F_u(0,1) \right\} & = F_u(1,0) =  F_u(1,0) \max \left\{ 1 , \frac{\langle u , v_2 \rangle}{\langle u , v_2 - v_1 \rangle} \right\}.
    \end{align*}
    
    Thus, for any $u\in S^{d-1}$,
    \begin{align*}
        &\int_{(S^{d-1})^2}  \left(  \int_1^\infty \frac{1}{F_u(z,z-1)^{p+1}} \indic{\langle u , v_2 \rangle < z \langle u , v_2 - v_1 \rangle } \dint z \right) (\langle u , v_2 \rangle)_+ \dint \Phi(v_1,v_2) 
        \\& \leq \frac{2^{p+1}}{p(F_u(1,0) + F_u(0,1)) F_u(1,0)^p}  \int_{(S^{d-1})^2} \|v_1-v_2\| \indic{\langle u,v_2 \rangle >0} \dint \Phi(v_1,v_2) 
        \\& = \frac{2^p}{p(F_u(1,0) + F_u(0,1)) F_u(1,0)^p}  \int_{(S^{d-1})^2} \|v_1-v_2\| \dint \Phi(v_1,v_2),         
    \end{align*}    
    where the equality follows from the fact that \(\Phi \circ T^{-1} = \Phi\) and $\|(-v_1)-(-v_2)\|= \|v_1-v_2\|$.
    Hence
    \begin{align*}
        &\int_{S^{d-1}} \mathbb{E}  (\rho_1(u) - \rho_2(u))^p_+ \sigma(\dint u)
        \\&\leq \frac{2 p \Gamma(p+1)}{\gamma^p} \int_{S^{d-1}} \frac{2^p}{p(F_u(1,0) + F_u(0,1)) F_u(1,0)^p}  \int_{(S^{d-1})^2} \|v_1-v_2\| \dint \Phi(v_1,v_2)  \sigma(\dint u) 
        \\&= \frac{\EE_{\Phi}[\|V_2 - V_1\|]}{\gamma^p}  \int_{S^{d-1}} \frac{2^{p+1} \Gamma(p+1)}{(F_u(1,0) + F_u(0,1)) F_u(1,0)^p}  \sigma(\dint u). 
    \end{align*}
    Recalling that $F_u(1,0)=2h(\Pi_1,u)$ and  $F_u(0,1)=2h(\Pi_2,u)$ yields the proof.
\end{proof}

\section{Upper bound for Wasserstein distance between zero cells}\label{sec:main_results}
Recall the notation of \Cref{sec:background}.
In particular, recall that \(W_{\rho_p}\) and \(W_{\Delta}\), defined by \eqref{eq:Wasserstein_p}, \eqref{eq:radial_metric} and \eqref{def:symmDiff}, denote the Wasserstein distances with respect to the radial \(p\)-metric and the symmetric-difference volume metric, respectively. Before stating our main result, \Cref{thm:Wass_p_bnd_new}, we obtain the following core lemma giving an upper bound on the Wasserstein distance between zero cells depending on the Wasserstein distance \(W_{\text{geo}}\) between the directional distributions.

\begin{lemma} \label{lem:mainBound}
    Fix \(\gamma>0\), let \(\phi_1,\phi_2\) be nondegenerate even probability measures on $S^{d-1}$, and define stationary Poisson hyperplane processes \(\Psi_1,\Psi_2\) with common intensity \(\gamma\) and directional distributions \(\phi_1,\phi_2\), respectively.
    Let $Z_0(\Psi_1)$ and $Z_0(\Psi_2)$ be the zero cells of \(\Psi_1\) and \(\Psi_2\).
    Then, for $p \in [1, \infty)$,
    \begin{align*}
        W_{\rho_p}\bigl(Z_0(\Psi_1),Z_0(\Psi_2) \bigr) 
        \leq \frac{K_p(\phi_1,\phi_2)}{\gamma} W_{\mathrm{geo}}(\phi_1,\phi_2)^{\frac{1}{p}}, 
    \end{align*} 
    and    
    \begin{align*}
        W_{\Delta}(Z_0(\Psi_1),Z_0(\Psi_2)) 
        \leq \frac{K_\Delta(\phi_1,\phi_2)}{\gamma^d} W_{\mathrm{geo}}(\phi_1, \phi_2)^{\frac{1}{d}},
    \end{align*}
    where

\begin{align} 
        \label{def:Kp}
        K_p(\phi_1,\phi_2)
        &:= \left(\Gamma(p+1)
        \int_{S^{d-1}} \frac{h(\Pi_1,u)^p + h(\Pi_2,u)^p}{(h(\Pi_1,u) + h(\Pi_2,u)) h(\Pi_1,u)^p h(\Pi_2,u)^p} \sigma(\dint u) \right)^{\frac{1}{p}}
    \end{align}
    and
    \begin{align}
        \label{def:KDelta}
        K_{\Delta} (\phi_1,\phi_2)
        &:= \frac{K_d (\phi_1,\phi_2)}{2^{d-1}}\left(d!\int_{S^{d-1}}
             h(\Pi_1,u)^{-d} + h(\Pi_2,u)^{-d} \sigma(\mathrm{d}u) \right)^{\frac{d-1}{d}},
    \end{align}
    where $K_d (\phi_1,\phi_2)$ is the constant \eqref{def:Kp} for $p = d$.
\end{lemma}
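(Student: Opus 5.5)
The plan is to use the explicit coupling of \Cref{s:coupling} with an optimal coupling of the directional distributions, and then reduce everything to the moment bound of \Cref{thm:int_bnd_p}. Since $S^{d-1}$ is compact, fix an optimal coupling $\Phi\in\Pi(\phi_1,\phi_2)$ for $W_{\mathrm{geo}}$, so that $\EE_\Phi[\mathrm{geo}(V_1,V_2)]=W_{\mathrm{geo}}(\phi_1,\phi_2)$. Let $(Z_0^{(1)},Z_0^{(2)})$ be the coupling \eqref{eq:couplingZeroCells} induced by $\Phi$. Because the chord length is dominated by the arc length, $\|v_1-v_2\|\le \mathrm{geo}(v_1,v_2)$, and hence
\[
\EE_\Phi\|V_1-V_2\|\le W_{\mathrm{geo}}(\phi_1,\phi_2).
\]
Any coupling bounds the Wasserstein distance from above. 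Writing $\rho_i:=\rho_{Z_0^{(i)}}$ and $h_i:=h(\Pi_i,\cdot)$, it therefore suffices to bound $\EE[d_{\rho_p}(Z_0^{(1)},Z_0^{(2)})]$ and $\EE[d_\Delta(Z_0^{(1)},Z_0^{(2)})]$.

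\textbf{Radial metric.} By Jensen's inequality (concavity of $x\mapsto x^{1/p}$) and Fubini,
\[
\EE[d_{\rho_p}]\le\Bigl(\int_{S^{d-1}}\EE|\rho_1(u)-\rho_2(u)|^p\,\sigma(\dint u)\Bigr)^{1/p}.
\]
Next I split $|\rho_1-\rho_2|^p=(\rho_1-\rho_2)_+^p+(\rho_2-\rho_1)_+^p$. I apply \Cref{thm:int_bnd_p} to the first term. For the second term I apply the same proposition with the indices exchanged, using the coupling $\Phi$ pushed forward under $(v_1,v_2)\mapsto(v_2,v_1)$; this produces $h_2^p$ instead of $h_1^p$ in the denominator. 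Adding the two bounds gives
\[
\frac{\Gamma(p+1)\,\EE_\Phi\|V_1-V_2\|}{\gamma^p}\int_{S^{d-1}}\frac{h_1^p+h_2^p}{(h_1+h_2)h_1^ph_2^p}\,\sigma(\dint u)
=\frac{K_p(\phi_1,\phi_2)^p}{\gamma^p}\,\EE_\Phi\|V_1-V_2\|.
\]
Taking the $1/p$-th power and using $\EE_\Phi\|V_1-V_2\|\le W_{\mathrm{geo}}$ gives the first claim.

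\textbf{Symmetric-difference metric.} Start from \eqref{def:symmDiff}. By the mean value theorem, $|a^d-b^d|\le d|a-b|\max(a,b)^{d-1}$ for $a,b\ge 0$, so
\[
d_\Delta\le\int_{S^{d-1}}|\rho_1-\rho_2|\max(\rho_1,\rho_2)^{d-1}\,\dint\sigma.
\]
Take expectations and apply Hölder's inequality on the product space $\Omega\times S^{d-1}$ with exponents $d$ and $d/(d-1)$. This yields
\[
\EE[d_\Delta]\le\Bigl(\EE\!\int|\rho_1-\rho_2|^d\,\dint\sigma\Bigr)^{1/d}\Bigl(\EE\!\int\max(\rho_1,\rho_2)^d\,\dint\sigma\Bigr)^{(d-1)/d}.
\]
The first factor is bounded by the radial computation with $p=d$, giving $K_d\,\gamma^{-1}W_{\mathrm{geo}}^{1/d}$. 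For the second factor, use $\max(a,b)^d\le a^d+b^d$ together with \Cref{rem:propFu}: each $\rho_i(u)$ is exponential with rate $2\gamma h_i(u)$, so $\EE\rho_i(u)^d=d!\,(2\gamma h_i(u))^{-d}$. The second factor is then
\[
\bigl((2\gamma)^{-d}d!\textstyle\int(h_1^{-d}+h_2^{-d})\,\dint\sigma\bigr)^{(d-1)/d}
=2^{-(d-1)}\gamma^{-(d-1)}\bigl(d!\textstyle\int(h_1^{-d}+h_2^{-d})\,\dint\sigma\bigr)^{(d-1)/d}.
\]
Multiplying the two factors gives exactly $K_\Delta(\phi_1,\phi_2)\,\gamma^{-d}\,W_{\mathrm{geo}}^{1/d}$.

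\textbf{Expected obstacles.} The analytic work is already contained in \Cref{thm:int_bnd_p}, so the main points needing care are bookkeeping rather than new estimates.
\begin{itemize}
\item The index swap must be justified: after exchanging the roles of $\phi_1$ and $\phi_2$, the quantity $\EE\|V_2-V_1\|$ is unchanged, and the resulting bound carries $h_2^p$ in the denominator as claimed.
\item One should choose the inequality $|a^d-b^d|\le d|a-b|\max(a,b)^{d-1}$, rather than the cruder $d|a-b|(a^{d-1}+b^{d-1})$. With the cruder version the constant comes out larger than $K_\Delta$ by a factor $2^{1/d}$.
\item If the integrals defining $K_p$ or $K_\Delta$ are infinite, the statement is trivial, so no extra integrability assumption is needed.
\end{itemize}
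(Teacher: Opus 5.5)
Your proposal is correct and follows the paper's proof essentially step by step. The shared steps are: an optimal coupling together with $\|V_1-V_2\|\le\mathrm{geo}(V_1,V_2)$; Jensen plus \Cref{thm:int_bnd_p} applied to the positive part and, with indices swapped, to the negative part to obtain $K_p$; and, for $W_\Delta$, the bound $|a^d-b^d|\le d|a-b|\max(a,b)^{d-1}$, Hölder, $\max(a,b)^d\le a^d+b^d$ and the exponential moments of $\rho_i(u)$. The only cosmetic difference is that you apply Hölder once on $\Omega\times S^{d-1}$, whereas the paper applies it first in $\EE$ and then on the sphere, which gives the identical bound.
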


\begin{remark}
    If $p=1$ the first constant simplifies to 
    $K_1 
    = \int_{S^{d-1}} \frac{1}{h(\Pi_1,u) h(\Pi_2,u)} \sigma(\dint u).$
\end{remark}
\begin{proof}
 Let $\Phi$ be an optimal coupling of $\phi_1$ and $\phi_2$ so that 
\[W_{\mathrm{geo}}(\phi_1,\phi_2) 
= \mathbb{E}_{\Phi}[\mathrm{geo}(V_1,V_2)] 
\geq \mathbb{E}_{\Phi}[\|V_1-V_2\|].
\] 
Note that the latter inequality is satisfied by any coupling.

    Let $(Z_0^{(1)},Z_0^{(2)})$ be the coupling associated to \(\Phi\) as defined in \eqref{eq:couplingZeroCells}.
    By definitions \eqref{eq:Wasserstein_p}, \eqref{eq:radial_metric} and \eqref{def:symmDiff} of the Wasserstein distances $W_{\rho_p}$ and $W_{\Delta}$, we only need to show that
    \begin{align} \label{goal-p}
        \mathbb{E}   \left( \int_{S^{d-1}} | \rho_1(u) - \rho_2(u) |^p \sigma(\dint u) \right)^\frac{1}{p}
        &\leq \frac{K_p(\phi_1,\phi_2)}{\gamma} \EE_{\Phi}[\|V_2 - V_1\|]^{\frac{1}{p}},
    \end{align}
    and
    \begin{align} \label{goal-Delta}
        \frac{1}{d} \int_{S^{d-1}} \mathbb{E} | \rho_1(u)^d - \rho_2(u)^d | \sigma(\dint u) 
        &\leq \frac{K_\Delta(\phi_1,\phi_2)}{\gamma^d} \EE_{\Phi}[\|V_2 - V_1\|]^{\frac{1}{d}},
    \end{align}
    where $\rho_i(u) \coloneq \rho_{Z_0^{(i)}}(u)$.
    We start by using Jensen's inequality to bound the left hand side of \eqref{goal-p}.
    \begin{align}
         \label{eq:WpboundPosNeg}
        \mathbb{E}   \left( \int_{S^{d-1}} | \rho_1(u) - \rho_2(u) |^p \sigma(\dint u) \right)^\frac{1}{p}
        &\leq \left( \int_{S^{d-1}} \mathbb{E}  | \rho_1(u) - \rho_2(u) |^p \sigma(\dint u) \right)^\frac{1}{p} .
    \end{align}
    By \Cref{thm:int_bnd_p},
\begin{align*}
        \int_{S^{d-1}} \mathbb{E}  (\rho_1(u) - \rho_2(u))^p_+ \sigma(\dint u)
        &\leq \frac{\EE_{\Phi}[\|V_2 - V_1\|]}{\gamma^p}  
        \int_{S^{d-1}} \frac{ \Gamma(p+1)}{(h(\Pi_1,u) + h(\Pi_2,u)) h(\Pi_1,u)^p}  \sigma(\dint u).
    \end{align*}
    By exchanging the roles of indices \(1\) and \(2\), a similar bound is obtained for $\int_{S^{d-1}} \mathbb{E}  (\rho_1(u) - \rho_2(u))^p_- \sigma(\dint u)$, where we only need to swap $F_u(1,0)$ and $F_u(0,1)$, and it follows that

\begin{align}
        \label{common-bound-p-Delta}
        \left( \int_{S^{d-1}} \mathbb{E}  | \rho_1(u) - \rho_2(u) |^p \sigma(\dint u) \right)^\frac{1}{p}
        \leq \frac{K_p(\phi_1,\phi_2)}{\gamma} \EE_{\Phi}[\|V_2 - V_1\|]^{\frac{1}{p}}.
    \end{align}

    Combining this bound with \eqref{eq:WpboundPosNeg} gives \eqref{goal-p}.

    It remains to show \eqref{goal-Delta}.
    Observe that 
    \begin{align*}
        \big|\rho_1(u)^d - \rho_2(u)^d\big| 
        &= \big|\rho_1(u)-\rho_2(u)\big| \biggl(\sum_{i=0}^{d-1} \rho_1(u)^{d-1-i} \rho_2(u)^{i} \biggr)  \leq \big|\rho_1(u) - \rho_2(u)\big| d (\rho_1(u) \vee \rho_2(u))^{d-1},
    \end{align*}
    where \(\rho_1(u) \vee \rho_2(u)\) denotes \(\max\{\rho_1(u),\rho_2(u)\}\).
    Using this upper bound yields:
    \begin{align*}
        \frac{1}{d} \int_{S^{d-1}} \mathbb{E} | \rho_1(u)^d - \rho_2(u)^d | \sigma(\dint u)
        & \leq \int_{S^{d-1}} \mathbb{E}\bigl[\big|\rho_1(u) - \rho_2(u)\big| (\rho_{1}(u) \vee \rho_2(u))^{d-1} \bigr] \sigma(\mathrm{d}u).
    \end{align*}

    Applying Hölder's inequality to the expectation in the above expression gives:
    \begin{align}
        \notag
        &\frac{1}{d} \int_{S^{d-1}} \mathbb{E} | \rho_1(u)^d - \rho_2(u)^d | \sigma(\dint u)
        \\
        &\leq \int_{S^{d-1}} 
            \biggl(\mathbb{E}\bigl[\big|\rho_1(u) - \rho_2(u)\big|^d\bigr]\biggr)^{\frac{1}{d}} 
            \biggl(\mathbb{E}\bigl[\bigl(\rho_1(u) \vee \rho_2(u)\bigr)^d\bigr]\biggr)^{\frac{d-1}{d}}
            \sigma(\mathrm{d}u) \nonumber 
        \\
        &\leq 
            \biggl( \int_{S^{d-1}}
             \mathbb{E}\bigl[\big|\rho_1(u) - \rho_2(u)\big|^d\bigr] \sigma(\mathrm{d}u) 
            \biggr)^{\frac{1}{d}}
            \biggl( \int_{S^{d-1}}
             \mathbb{E}\bigl[\bigl(\rho_1(u) \vee \rho_2(u)\bigr)^d\bigr] \sigma(\mathrm{d}u)
            \biggr)^{\frac{d-1}{d}}, \label{eq:wass_symm_1}
    \end{align}
    where the last inequality follows from again applying Hölder's inequality 
    to the outer integral.
    Using the bound \((\rho_1(u) \vee \rho_2(u))^d \leq \rho_1(u)^d + \rho_2(u)^d\) and interchanging the integral and expectation,
    \begin{align*}
        \int_{S^{d-1}}
             \mathbb{E}\bigl[\bigl(\rho_1(u) \vee \rho_2(u)\bigr)^d\bigr] \sigma(\mathrm{d}u)
        &\leq 
        \int_{S^{d-1}}
             \mathbb{E}\bigl[\rho_1(u)^d \bigr] \sigma(\mathrm{d}u) 
        + \int_{S^{d-1}} \mathbb{E}\bigl[\rho_2(u)^d\bigr] \sigma(\mathrm{d}u) \nonumber \\
        &=\frac{d!}{(2\gamma)^d}
        \int_{S^{d-1}}
             h(\Pi_1,u)^{-d} + h(\Pi_2,u)^{-d} \sigma(\mathrm{d}u),
    \end{align*}
    where the equality follows from the fact that $\rho_i(u)$ is exponentially distributed with rate parameter $2 \gamma h(\Pi_i,u)$, see \Cref{rem:propFu}.
    Plugging this and \eqref{common-bound-p-Delta} (with $p=d$) into \eqref{eq:wass_symm_1} gives
    \begin{align*}
        &\frac{1}{d} \int_{S^{d-1}} \mathbb{E} | \rho_1(u)^d - \rho_2(u)^d | \sigma(\dint u) \\
        &\leq \frac{K_d(\phi_1,\phi_2)}{\gamma^d} \left(\frac{d!}{2^d}\int_{S^{d-1}}
             \left[h(\Pi_1,u)^{-d} + h(\Pi_2,u)^{-d}\right] \sigma(\mathrm{d}u) \right)^{\frac{d-1}{d}} \EE_{\Phi}[\|V_2 - V_1\|]^{\frac{1}{d}} ,
    \end{align*}
    which is precisely the result \eqref{goal-Delta}.
\end{proof}

To ensure local stability of the distribution of the zero cell, the upper bound on the Wasserstein distance between the zero cells should become small as the distance between directional distributions becomes small. We show this is indeed the case in our main result below.

\begin{theorem}\label{thm:Wass_p_bnd_new}  Consider the setting of \Cref{lem:mainBound}.  
Assume that $\phi_2$ is sufficiently close to $\phi_1$ such that
    \begin{align}\label{e:closetophi1-bnd} 
    W_{\mathrm{geo}}(\phi_1,\phi_2) \leq \min_{u\in S^{d-1}} h(\Pi_1,u) \eqcolon C_1 .
    \end{align}
Then, for $p \in [1, \infty)$,
\begin{align}\label{e:Wrhop-holderbnd}
         W_{\rho_p}\bigl(Z_0^{(1)} , Z_0^{(2)} \bigr) 
        \leq \frac{ 6\Gamma(p+1)^{1/p}(d\kappa_d)^{1/p} C_1^{-\frac{p+1}{p}} }{\gamma} W_{\mathrm{geo}}(\phi_1,\phi_2)^{\frac{1}{p}}.   
\end{align}
and
\begin{align}\label{e:Wdelta-holderbnd} 
    W_{\Delta}\bigl(Z_0^{(1)},Z_0^{(2)}\bigr) 
    &\leq 
    \frac{ 6d\kappa_d\,d!\, \left(1+2^{-d}\right)^{\frac{d-1}{d}} C_1^{-d-\frac1d} }{\gamma^d} W_{\mathrm{geo}}(\phi_1,\phi_2)^{\frac1d}, 
\end{align}
where \(\kappa_d:= \lambda_d(B_d)\) is the volume of the unit ball in \(\mathbb{R}^d\).
\end{theorem}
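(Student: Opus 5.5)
The plan is to apply \Cref{lem:mainBound} and bound the constants $K_p(\phi_1,\phi_2)$ and $K_\Delta(\phi_1,\phi_2)$ uniformly in terms of $C_1=\min_u h(\Pi_1,u)$, using the closeness assumption \eqref{e:closetophi1-bnd}. Write $h_i(u):=h(\Pi_i,u)$.

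\textbf{Step 1: a lower bound on $h_2$ in terms of $h_1$.} Take an optimal coupling $\Phi$ of $\phi_1,\phi_2$. For each $u$,
\[
|h_1(u)-h_2(u)| = \tfrac12\Bigl|\int \bigl(|\langle u,v_1\rangle|-|\langle u,v_2\rangle|\bigr)\dint\Phi\Bigr| \le \tfrac12 \EE_\Phi\|V_1-V_2\| \le \tfrac12 W_{\mathrm{geo}}(\phi_1,\phi_2).
\]
Under \eqref{e:closetophi1-bnd} this gives $h_2(u)\ge h_1(u)-C_1/2\ge C_1/2$ for all $u$. Of course $h_1(u)\ge C_1$ everywhere.

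\textbf{Step 2: bound $K_p$.} The integrand in \eqref{def:Kp} equals
\[
\frac{1}{(h_1+h_2)h_2^p}+\frac{1}{(h_1+h_2)h_1^p}.
\]
Since $h_1+h_2\ge \tfrac32 C_1$, $h_1\ge C_1$ and $h_2\ge C_1/2$, it is at most
\[
\frac{2}{3C_1}\bigl(2^pC_1^{-p}+C_1^{-p}\bigr)\le \frac{2^{p+1}}{C_1^{p+1}}.
\]
Integrating over $S^{d-1}$, whose total mass is $\sigma(S^{d-1})=d\kappa_d$, yields
\[
K_p\le \Gamma(p+1)^{1/p}(d\kappa_d)^{1/p}\,2^{(p+1)/p}\,C_1^{-(p+1)/p}.
\]
Because $2^{(p+1)/p}\le 4\le 6$, inserting this into \Cref{lem:mainBound} gives \eqref{e:Wrhop-holderbnd}. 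Any slack in the constant $6$ is fine here.

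\textbf{Step 3: bound $K_\Delta$.} Using the Step 1 bounds once more,
\[
\int_{S^{d-1}}(h_1^{-d}+h_2^{-d})\,\dint\sigma\le d\kappa_d\,C_1^{-d}(1+2^d),
\]
so
\[
\Bigl(d!\int_{S^{d-1}}(h_1^{-d}+h_2^{-d})\,\dint\sigma\Bigr)^{\frac{d-1}{d}}\le (d!\,d\kappa_d)^{\frac{d-1}{d}}\,C_1^{-(d-1)}\,2^{d-1}(1+2^{-d})^{\frac{d-1}{d}}.
\]
The factor $2^{d-1}$ cancels the prefactor $2^{-(d-1)}$ in \eqref{def:KDelta}. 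Combining this with Step 2 at $p=d$, the powers of $C_1$ add up to $C_1^{-(d+1)/d-(d-1)}=C_1^{-d-1/d}$.

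The remaining work is collecting the numerical constants. The factors $(d!)^{1/d}(d!)^{(d-1)/d}=d!$ and $(d\kappa_d)^{1/d}(d\kappa_d)^{(d-1)/d}=d\kappa_d$ combine cleanly, and $2^{(d+1)/d}\le 6$ is absorbed into the leading constant. This gives \eqref{e:Wdelta-holderbnd}.

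\textbf{Main obstacle.} Step 1 is the only real obstacle: it supplies the uniform lower bound on $h(\Pi_2,\cdot)$, and everything else is bookkeeping. It relies on the $1$-Lipschitz property of $v\mapsto|\langle u,v\rangle|$ together with $\|v_1-v_2\|\le\mathrm{geo}(v_1,v_2)$.
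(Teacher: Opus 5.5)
Your proof is correct and follows essentially the paper's argument. Like the paper, you bound $|h(\Pi_1,u)-h(\Pi_2,u)|\le \tfrac12 W_{\mathrm{geo}}(\phi_1,\phi_2)$ through a coupling, turn this into a pointwise lower bound on $h(\Pi_2,\cdot)$, and then control $K_p$ and $K_\Delta$ from \Cref{lem:mainBound} using $\sigma(S^{d-1})=d\kappa_d$. The only difference is bookkeeping: the paper uses the two-sided ratio bound $\tfrac12 h(\Pi_1,u)\le h(\Pi_2,u)\le\tfrac32 h(\Pi_1,u)$ to bound the integrand by $6^p h(\Pi_1,u)^{-p-1}$, whereas your splitting of the integrand needs only $h(\Pi_2,u)\ge C_1/2$ and gives the slightly sharper factor $2^{(p+1)/p}\le 4$.
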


\begin{remark}
    Note that in \Cref{thm:Wass_p_bnd_new}, the dependence in the upper bound on the distance $W_{\mathrm{geo}}(\phi_1, \phi_2)$ weakens as $p$ increases, and the stated local H\"{o}lder continuity estimate degenerates as $p \to \infty$. Thus, local H\"{o}lder continuity for the radial metric for $p = \infty$ remains an open problem.
\end{remark}

\begin{proof}
    We start by showing that for any unit vector $u$, $h(\Pi_2,u)$ can be approximated by $h(\Pi_1,u)$ up to an error factor in the range $[1/2,3/2]$.
    To obtain this, we first note that, for any coupling $\Phi$ of $\phi_1$ and $\phi_2$, we have
    \begin{align*}
        |h(\Pi_2,u) - h(\Pi_1,u)|
        &= \frac{1}{2}  \left| \int_{S^{d-1}} |\langle u , v_1 \rangle| \phi_1(\dint v_1) - \int_{S^{d-1}} |\langle u , v_2 \rangle| \phi_2(\dint v_2) \right| \\
        &= \frac{1}{2}  \left| \int_{S^{d-1}\times S^{d-1}} |\langle u , v_1 \rangle| - |\langle u , v_2 \rangle| \Phi(\dint v_1 , \dint v_2) \right| \\
        &\leq \frac{1}{2}  \int_{S^{d-1}\times S^{d-1}} |\langle u , v_1 \rangle - \langle u , v_2 \rangle|  \Phi(\dint v_1 , \dint v_2)
        \\&\leq \frac{1}{2} \EE_\Phi \|V_2-V_1\|.
    \end{align*}    
    Minimizing over all couplings and using our assumption \eqref{e:closetophi1-bnd}, we get
    \begin{align*}
        |h(\Pi_2,u) - h(\Pi_1,u)|
        &\leq \frac{1}{2} W_{\mathrm{geo}}(\phi_1,\phi_2)
        \leq \frac{h(\Pi_1,u)}{2} ,
    \end{align*}
    from which it follows that
    \begin{align}
        \label{eq:boundsOnh}
        \frac{1}{2} h(\Pi_1,u) 
        \leq h(\Pi_2,u)
        \leq \frac{3}{2} h(\Pi_1,u) .
    \end{align}
    From these bounds we have
    \begin{align}
        \label{boundOnKp}
        \frac{h(\Pi_1,u)^p + h(\Pi_2,u)^p}{(h(\Pi_1,u) + h(\Pi_2,u)) h(\Pi_1,u)^p h(\Pi_2,u)^p} 
        &\leq \frac{ (1+(3/2)^p) h(\Pi_1,u)^p}{ (3/2) (1/2^p) h(\Pi_1,u) h(\Pi_1,u)^p h(\Pi_1,u)^p} 
        \leq \frac{6^p}{h(\Pi_1,u)^{p+1}} .
    \end{align}
    Plugging this into \eqref{def:Kp}, we obtain
    \begin{align*} 
        K_p(\phi_1,\phi_2) &\leq 6\Gamma(p+1)^{1/p} C_1^{-\frac{p+1}{p}} (d\kappa_d)^{1/p}, 
    \end{align*}
    which, by \Cref{lem:mainBound}, gives \eqref{e:Wrhop-holderbnd}.
    From \eqref{eq:boundsOnh} we also get
    \begin{align*} 
       \int_{S^{d-1}} \frac{1}{h(\Pi_1,u)^d} + \frac{1}{h(\Pi_2,u)^d} \sigma(\dint u)
        & \leq \int_{S^{d-1}} \frac{1+2^d}{h(\Pi_1,u)^d} \sigma(\dint u).
    \end{align*}
    Therefore, with the definition \eqref{def:KDelta} of $K_\Delta$ and the bound \eqref{boundOnKp}, we obtain

    \begin{align*}
        K_{\Delta}
        &\leq
        \frac{
        6(d!)^{1/d}(d\kappa_d)^{1/d}
        C_1^{-\frac{d+1}{d}}
        }{2^{d-1}}
        \left(
        (d\kappa_d)d!
        \frac{1+2^d}{C_1^d}
        \right)^{\frac{d-1}{d}}
        =
        6d\kappa_d\,d!\,
        \left(1+2^{-d}\right)^{\frac{d-1}{d}}
        C_1^{-d-\frac1d}.
    \end{align*}

    By \Cref{lem:mainBound} we thus obtain \eqref{e:Wdelta-holderbnd} which completes the proof.
\end{proof}

\section{Application to density estimation}\label{sec:densityestimation}

A natural approach for nonparametric regression and density estimation is to partition the set of observations and compute statistics locally within each cell. These histogram estimates are classical methods in nonparametric statistics \cite[Chapter 4]{gyorfi2002distribution}. The simplest version of a histogram estimator partitions the space into hypercubes with width based on the number of available training data points, but not on the distribution of the data itself. Existing theoretical guarantees for these methods include sufficient conditions for $L^1$ consistency of the density estimator \cite{devroye1985nonparametric} and for nonparametric regressors and classifiers \cite{devroye1983distribution}.

Many variants of the histogram approach introduce randomization into the construction of the partition of the input space of the regression function or the support of the probability density. Examples include random binning features \cite{Rahimi} and the Mondrian process \cite{roy2008mondrian, lakshminarayanan2014mondrian} as well as partitions that allow for oblique split directions such as random projection trees \cite{dasgupta2008random} and partitions generated by stable under iteration (STIT) processes \cite{OReillyTran2021, OReillyTran2021minimax}. Randomized partitioning estimates can achieve state-of-the-art empirical performance in many applications when combined with data-adaptive mechanisms for choosing features along which to make splits as in the CART and C4.5 trees that generate random forests \cite{breiman2001random}. 

In this section, we see how the results in this paper can be used to prove a stability result for a class of random partition estimators with respect to perturbations in the distribution of the random partition in the setting of density estimation. The random partition will be the random tessellation generated from a stationary Poisson hyperplane process or hierarchical STIT process \cite{Nagel2005}, and the focus will be on perturbations of the directional distribution determining the linear combination of covariates used to make splits. We recall that for an STIT process with lifetime $\gamma$ and directional distribution $\phi$, the zero cell has the same distribution as the zero cell of a Poisson hyperplane process with intensity $\gamma$ and directional distribution $\phi$ \cite{Thale2013Poisson}.

Let $\mathcal{D}_n := \{X_1, \ldots, X_n\}$ be $n$ i.i.d. samples drawn from a distribution with unknown density $f$ in $\RR^d$. We can define a density estimator for $f$ from a random tessellation $\mathcal{P}$ of $\RR^d$ as 
\begin{align}\label{e:tree_estimate}
\hat{f}_{n}(x) := \frac{1}{n} \sum_{i=1}^n \frac{1_{\{x \in Z_{X_i}\}}}{\mathrm{vol}(Z_{X_i})} = \frac{1}{n} \sum_{i=1}^n \frac{1_{\{X_i \in Z_x\}}}{\mathrm{vol}(Z_{x})}, \quad x \in \RR^d,
\end{align}
where $\mathrm{vol}(Z)$ denotes the $d$-dimensional volume of a cell $Z \subset \RR^d$, and $Z_x$ is the cell of the tessellation that contains $x$.
Dividing by the volume in each term of the sum ensures that \(\hat{f}_n(x)\) integrates to one, that is, it is indeed a density. 

We are interested in the stability of this estimate with respect to perturbations in the distribution of the tessellation. 
In particular, the main results of this section provide upper bounds on the expected \(L^1\) distance between density estimators constructed from two stationary Poisson hyperplane tessellations in terms of the Wasserstein distance between their directional distributions.

\begin{proposition}\label{prop:density-robust}
    Let $\hat{f}^{(1)}_{\gamma, n}$ and $\hat{f}^{(2)}_{\gamma, n}$ be the estimators of a density $f$ in $\mathbb{R}^d$ for $d \geq 2$ as in \eqref{e:tree_estimate} obtained from two stationary Poisson hyperplane tessellations or STIT tessellations $\mathcal{P}_1(\gamma)$ and $\mathcal{P}_2(\gamma)$ with intensity/lifetime parameter $\gamma$ and directional distributions $\phi_1$ and $\phi_2$, respectively. Assume the tessellations are independent of the data samples $\mathcal{D}_n$.
    Then, there exist random probability densities \(\hat{g}_{\gamma,n}^{(1)}\) and \(\hat{g}_{\gamma,n}^{(2)}\), defined on a common probability space, such that, for every fixed \(x \in \mathbb{R}^d\),
    \[ 
    \hat{g}_{\gamma,n}^{(i)}(x) 
    \overset{(d)}{=} 
    \hat{f}_{\gamma,n}^{(i)}(x), \qquad i=1,2, 
    \]
     and such that, for all $\phi_2$ satisfying assumption \eqref{e:closetophi1-bnd}, we have
    \begin{align*}
        2 \mathbb{E}[d_{\operatorname{TV}}(\hat{g}^{(1)}_{\gamma, n},\hat{g}^{(2)}_{\gamma, n})]
        =
    \EE \left[\int_{\mathbb{R}^d} \left|\hat{g}^{(1)}_{\gamma, n}(x) - \hat{g}^{(2)}_{\gamma, n}(x)\right| \dint x\right]
    &\leq C(\phi_1, d) W_{\mathrm{geo}}(\phi_1,\phi_2)^{1/d}, 
\end{align*}
for some constant $C(\phi_1,d)$ depending only on $\phi_1$ and $d$, where the expectation is taken with respect to the random tessellations and $\mathcal{D}_n$.
\end{proposition}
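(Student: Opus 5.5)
The idea is to replace the cell $Z_x$ of the tessellation containing $x$ by a translated zero cell. By stationarity, $Z_x \overset{(d)}{=} x + Z_0$ for each fixed $x$. We therefore take the coupled zero cells $(Z_0^{(1)},Z_0^{(2)})$ of \eqref{eq:couplingZeroCells}, built from an optimal coupling $\Phi$ of $\phi_1,\phi_2$ and independent of $\mathcal{D}_n$, and define
\[
\hat g^{(i)}_{\gamma,n}(x) := \frac{1}{n}\sum_{j=1}^n \frac{\mathbf{1}\{X_j \in x - Z_0^{(i)}\}}{\lambda_d(Z_0^{(i)})} = \frac{1}{n}\sum_{j=1}^n \frac{\mathbf{1}\{x\in X_j + Z_0^{(i)}\}}{\lambda_d(Z_0^{(i)})}, \qquad i=1,2.
\]
By the second expression, each $\hat g^{(i)}_{\gamma,n}$ is a mixture of uniform densities on the translates $X_j+Z_0^{(i)}$, so it is a probability density. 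The sign convention ($x-Z_0$ versus $x+Z_0$) does not matter, since $Z_0\overset{(d)}{=}-Z_0$ because $\phi$ is even.

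For fixed $x$, the pair $(Z_x, \mathcal{D}_n)$ has $Z_x \overset{(d)}{=} x+Z_0^{(i)}$ with $Z_x$ independent of $\mathcal D_n$, and $\mathbf{1}\{X_j\in x+Z_0\}$ equals $\mathbf{1}\{X_j \in x - Z_0\}$ in law jointly with the volume. This gives $\hat g^{(i)}_{\gamma,n}(x)\overset{(d)}{=}\hat f^{(i)}_{\gamma,n}(x)$. In the STIT case we use instead the fact, cited from \cite{Thale2013Poisson}, that the zero cell of a STIT tessellation has the same law as the Poisson zero cell.

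For the $L^1$ bound, write $A=Z_0^{(1)}$, $B=Z_0^{(2)}$, $a=\lambda_d(A)$, $b=\lambda_d(B)$. The triangle inequality and translation invariance of Lebesgue measure give
\[
\int_{\mathbb{R}^d}\bigl|\hat g^{(1)}_{\gamma,n}-\hat g^{(2)}_{\gamma,n}\bigr|\dint x \le \Bigl\|\frac{\mathbf 1_A}{a}-\frac{\mathbf 1_B}{b}\Bigr\|_{L^1} .
\]
Notably the data disappear entirely from this bound. Splitting
\[
\frac{\mathbf 1_A}{a}-\frac{\mathbf 1_B}{b} = \frac{\mathbf 1_A-\mathbf 1_B}{a} + \mathbf 1_B\Bigl(\frac1a-\frac1b\Bigr)
\]
yields
\[
\Bigl\|\frac{\mathbf 1_A}{a}-\frac{\mathbf 1_B}{b}\Bigr\|_{L^1} \le \frac{d_\Delta(A,B)}{a}+\frac{|b-a|}{a} \le \frac{2\,d_\Delta(A,B)}{\lambda_d(A)} .
\]
It therefore suffices to bound $\mathbb{E}\bigl[d_\Delta(A,B)/\lambda_d(A)\bigr]$. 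Also note that $d_\Delta$ and $\lambda_d$ scale like $\gamma^{-d}$, so the ratio is $\gamma$-free. This explains why $C$ depends only on $\phi_1$ and $d$; one can first reduce to $\gamma=1$ by scaling.

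The main obstacle is the random denominator $\lambda_d(A)$, which can be small. My first attempt is a Hölder split:
\[
\mathbb{E}\Bigl[\frac{d_\Delta}{\lambda_d(A)}\Bigr] \le \mathbb{E}\bigl[d_\Delta^{q}\bigr]^{1/q}\,\mathbb{E}\bigl[\lambda_d(A)^{-q'}\bigr]^{1/q'} .
\]
The second factor requires negative moments of the zero cell volume. I would bound these using $A\supset$ the ball of radius equal to the inradius of $A$, whose distribution is explicit: $\mathbb{P}(r(Z_0)\ge s)=e^{-2\gamma s}$. However, $\mathbb{E}\, r^{-dq'}$ is finite only if $dq'<1$, which is impossible. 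So instead I would use a better lower bound. Since the first factor in the Hölder split is hard for $q>1$ anyway, the cleaner route is to avoid the division by bounding pointwise:
\[
\frac{d_\Delta(A,B)}{\lambda_d(A)} \le \frac{1}{d\,\lambda_d(A)}\int_{S^{d-1}}|\rho_A^d-\rho_B^d|\,\dint\sigma ,
\]
and then control the ratio by truncation. On the event $\{\lambda_d(A)\ge \varepsilon\}$, use the bound $\varepsilon^{-1}\,\mathbb{E}\, d_\Delta\lesssim \varepsilon^{-1}W_{\mathrm{geo}}^{1/d}$ from \Cref{thm:Wass_p_bnd_new}. On the complementary event, use the trivial bound $2$ together with the small-ball estimate $\mathbb{P}(\lambda_d(Z_0)<\varepsilon)\le C\varepsilon^{\theta}$ for some $\theta>0$. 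This small-ball estimate follows since $Z_0$ contains the simplex/crosspolytope spanned by $\rho_A(\pm e_i)$-type points, giving $\lambda_d(A)\gtrsim\prod_i \min(\rho_A(e_i),\rho_A(-e_i))$, with exponential radial marginals (\Cref{rem:propFu}). Optimizing $\varepsilon$ gives a bound of the form $C\,W_{\mathrm{geo}}^{\theta/(d(1+\theta))}$. To obtain the exact exponent $1/d$ as stated, I would look for a sharper argument exploiting the scale-invariance of the ratio, e.g.\ by redoing the computation of \Cref{thm:int_bnd_p} with weight $\lambda_d(A)^{-1}$ via conditioning on the radial pair. I expect this step to be the genuinely delicate one.
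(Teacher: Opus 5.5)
Your coupled-zero-cell construction of $\hat g^{(i)}_{\gamma,n}$, the scaling reduction to $\gamma=1$, and the reduction to bounding $\mathbb{E}\bigl\|\mathbf{1}_A/\lambda_d(A)-\mathbf{1}_B/\lambda_d(B)\bigr\|_{L^1}$ all match the paper. The gap is the final step, which you leave open: truncation does not give the exponent $1/d$. With your small-ball estimate ($\theta=1/d$) it gives only $W_{\mathrm{geo}}^{1/(d(d+1))}$. Even the best possible $\theta=1$ gives only $W_{\mathrm{geo}}^{1/(2d)}$; that value follows from Markov's inequality once one knows $\mathbb{E}[\lambda_d(Z_0)^{-1}]<\infty$. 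Truncation is inherently lossy here, so as written you prove a weaker H\"older bound, not the proposition.

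Two ideas are missing.

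\emph{Spherical H\"older step.} Apply H\"older on the sphere pointwise, before taking expectations, so the random volume enters only to the power $1/d$. Use $\lambda_d(A\setminus B)\le\int_{S^{d-1}}(\rho_A-\rho_B)_+\,\rho_A^{d-1}\,\dint\sigma$ and $\int_{S^{d-1}}\rho_A^d\,\dint\sigma=d\,\lambda_d(A)$. H\"older with exponents $d$ and $d/(d-1)$ then gives
\[
\frac{\lambda_d(A\setminus B)}{\lambda_d(A)}\le d^{\frac{d-1}{d}}\Bigl(\int_{S^{d-1}}(\rho_A-\rho_B)_+^d\,\dint\sigma\Bigr)^{1/d}\lambda_d(A)^{-1/d}.
\]
H\"older in expectation with the same exponents bounds the mean by $d^{\frac{d-1}{d}}\,\mathbb{E}\bigl[\int_{S^{d-1}}(\rho_A-\rho_B)_+^d\,\dint\sigma\bigr]^{1/d}\,\mathbb{E}\bigl[\lambda_d(A)^{-1/(d-1)}\bigr]^{\frac{d-1}{d}}$. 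By Proposition~\ref{thm:int_bnd_p} with $p=d$ under the optimal coupling, the first factor is at most a constant depending only on $\phi_1$ (under \eqref{e:closetophi1-bnd}) times $W_{\mathrm{geo}}(\phi_1,\phi_2)^{1/d}$. That linear dependence on $W_{\mathrm{geo}}$ inside the $1/d$ power is exactly where the stated exponent comes from.

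\emph{Negative volume moment.} The needed moment is finite, but your inradius bound is too crude to show it; it only controls $\mathbb{E}[\lambda_d(Z_0)^{-s}]$ for $s<1/d$. The right tool is that the zero cell is the volume-biased typical cell $Z$. Hence $\mathbb{E}[\lambda_d(Z_0)^{-s}]=\mathbb{E}[\lambda_d(Z)^{1-s}]/\mathbb{E}[\lambda_d(Z)]\le\mathbb{E}[\lambda_d(Z)]^{-s}$ for $0\le s\le 1$, by Jensen. The choice $s=1/(d-1)\le 1$ is admissible precisely because $d\ge 2$. So your diagnosis that negative moments are the obstruction was mistaken: even $\mathbb{E}[\lambda_d(Z_0)^{-1}]$ is finite.

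\emph{Use the symmetric split.} Split as the paper does, into $2\lambda_d(A\setminus B)/\lambda_d(A)+2\lambda_d(B\setminus A)/\lambda_d(B)$. You get this for free, since your bound also holds with $\lambda_d(B)$ in the denominator, so it holds with $\max\{\lambda_d(A),\lambda_d(B)\}$. Do not keep $2d_\Delta(A,B)/\lambda_d(A)$. Pairing $\lambda_d(B\setminus A)$ with $\lambda_d(A)^{-1}$ leaves a factor $\lambda_d(B)^{(d-1)/d}\lambda_d(A)^{-1}$ after the spherical H\"older step. That would require a negative moment of $\lambda_d(A)$ of order $d/(d-1)>1$, which the volume-biasing argument does not cover.
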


\begin{proof}
 Let \(\Phi\) be the optimal coupling of \(\phi_1\) and \(\phi_2\) that attains the Wasserstein metric: 
 \[W_{\mathrm{geo}}(\phi_1,\phi_2) 
= \mathbb{E}_{\Phi}[\mathrm{geo}(V_1,V_2)] 
\geq \mathbb{E}_{\Phi}[\|V_1-V_2\|],
\]
where \((V_1,V_2) \sim \Phi\).
Let $(Z_0^{(1)}, Z_0^{(2)})$ be the coupling of zero cells from the coupling \eqref{eq:couplingZeroCells} of stationary Poisson hyperplane tessellations $\mathcal{P}_1(1)$ and $\mathcal{P}_2(1)$ with unit intensity and with the coupling \(\Phi\) of directional distributions $\phi_1$ and $\phi_2$, respectively. For each $i = 1,2$, define 

\begin{align}\label{e:hatgn}
    \hat{g}^{(i)}_{\gamma, n}(x) := \frac{1}{n} \sum_{j=1}^n \frac{1_{\{X_j \in \gamma^{-1}Z^{(i)}_0 + x\}}}{\mathrm{vol}(\gamma^{-1}Z_0^{(i)} + x)
    },\; \forall x \in \mathbb{R}^d.
\end{align}
By stationarity and the scaling property \cite[Lemma 5]{Nagel2005}, $\frac{1}{\gamma}Z^{(i)}_0 + x \overset{{(d)}}{=} Z_{x, \gamma}^{(i)}$ for all \(x \in \mathbb{R}^d\), where $Z_{x, \gamma}^{(i)}$ is the cell of  $\mathcal{P}_i(\gamma)$ containing $x$.
Consequently, for every fixed $x\in\mathbb{R}^d$, 
\[
 \hat{g}_{\gamma,n}^{(i)}(x) 
 \overset{(d)}{=} 
 \hat{f}_{\gamma,n}^{(i)}(x), \qquad i=1,2. 
 \]

To show the upper bound in the proposition, we first see that the triangle inequality and a change of variables $y = \frac{z}{\gamma} + x$ give 
\begin{align}
   \EE \left[\int_{\mathbb{R}^d} \left|\hat{g}^{(1)}_{\gamma, n}(x) - \hat{g}^{(2)}_{\gamma, n}(x)\right| \dint x\right] 
   & \leq  \EE \left[\int_{\mathbb{R}^d} \int_{\RR^d} f(y) \left|\frac{1_{\{y \in \gamma^{-1}Z^{(1)}_0 + x\}}}{\mathrm{vol}(\gamma^{-1}Z^{(1)}_{0} + x)} - \frac{1_{\{y \in \gamma^{-1}Z^{(2)}_0 + x\}}}{\mathrm{vol}(\gamma^{-1}Z^{(2)}_{0}+x)}\right| \dint y \dint x\right] \nonumber \\
   &= \EE \left[ \int_{\RR^d} \int_{\mathbb{R}^d}  f\left(\frac{z}{\gamma} + x\right) \left|\frac{1_{\{z \in Z^{(1)}_0\}}}{\mathrm{vol}(Z^{(1)}_{0})} - \frac{1_{\{z \in Z^{(2)}_0\}}}{\mathrm{vol}(Z^{(2)}_{0})}\right| \dint x \dint z \right] \nonumber \\
   &= \EE \left[\int_{\RR^d}\left|\frac{1_{\{z \in Z^{(1)}_0\}}}{\mathrm{vol}(Z^{(1)}_{0})} - \frac{1_{\{z \in Z^{(2)}_0\}}}{\mathrm{vol}(Z^{(2)}_{0})}\right| \dint z \right],
\end{align}
where the last equality follows from taking the integral $\int_{\mathbb{R}^d} f(\frac{z}{\gamma} + x) \dint x  = 1$.
Next, we see that
\begin{align}\label{e:voldiff_frac}
    &\left|\frac{1_{\{z \in Z^{(1)}_0\}}}{\mathrm{vol}(Z^{(1)}_{0})} - \frac{1_{\{z \in Z^{(2)}_0\}}}{\mathrm{vol}(Z^{(2)}_{0})}\right|  = \frac{\left|\mathrm{vol}(Z^{(2)}_{0})1_{\{z \in Z^{(1)}_0\}} - \mathrm{vol}(Z^{(1)}_{0})1_{\{z \in Z^{(2)}_{0}\}}\right|}{\mathrm{vol}(Z^{(1)}_{0})\mathrm{vol}(Z^{(2)}_{0})} \nonumber \\
    &= \frac{\left|\mathrm{vol}(Z^{(2)}_{0}) - \mathrm{vol}(Z^{(1)}_{0})\right|}{\mathrm{vol}(Z^{(1)}_{0})\mathrm{vol}(Z^{(2)}_{0})}1_{\{z \in Z_0^{(1)} \cap Z_0^{(2)}\}} + \frac{1_{\{z \in Z_0^{(1)} \cap (Z_0^{(2)})^c\}}}{\mathrm{vol}(Z^{(1)}_{0})} + \frac{1_{\{z \in Z_0^{(2)} \cap (Z_0^{(1)})^c\}}}{\mathrm{vol}(Z^{(2)}_{0})}. 
\end{align}
Plugging this into the above integral gives
\begin{align}\label{e:int_voldiff_bnd}
    & \int_{\RR^d}\left|\frac{1_{\{z \in Z^{(1)}_0\}}}{\mathrm{vol}(Z^{(1)}_{0})} - \frac{1_{\{z \in Z^{(2)}_0\}}}{\mathrm{vol}(Z^{(2)}_{0})}\right| \dint z  \nonumber \\
    &\leq \frac{|\mathrm{vol}(Z_0^{(2)}) - \mathrm{vol}(Z_0^{(1)})|}{\mathrm{vol}(Z_0^{(1)})\mathrm{vol}(Z_0^{(2)})} \mathrm{vol}(Z_0^{(1)} \cap Z_0^{(2)})  + \frac{\mathrm{vol}(Z_0^{(1)} \cap (Z^{(2)}_0)^c)}{\mathrm{vol}(Z_0^{(1)})} + \frac{\mathrm{vol}(Z_0^{(2)} \cap (Z^{(1)}_0)^c)}{\mathrm{vol}(Z_0^{(2)})} \nonumber \\
    &\leq \frac{2\mathrm{vol}(Z_0^{(1)} \cap (Z^{(2)}_0)^c)}{\mathrm{vol}(Z_0^{(1)})} + \frac{2\mathrm{vol}(Z_0^{(2)} \cap (Z^{(1)}_0)^c)}{\mathrm{vol}(Z_0^{(2)})}. 
\end{align}
Thus, we have the upper bound
\begin{align*}
    \EE \left[\int_{\mathbb{R}^d} \left|\hat{g}^{(1)}_{\gamma,n}(x) - \hat{g}^{(2)}_{\gamma,n}(x)\right| \dint x \right] 
    &\leq 2\EE\left[\frac{\mathrm{vol}(Z_0^{(1)} \cap (Z^{(2)}_0)^c)}{\mathrm{vol}(Z^{(1)}_0)} + \frac{\mathrm{vol}(Z_0^{(2)} \cap (Z_0^{(1)})^c)}{\mathrm{vol}(Z^{(2)}_0)}\right].
\end{align*}

Now consider the expectation
\begin{align*}
\EE\left[\frac{\mathrm{vol}(Z_0^{(1)} \cap (Z_0^{(2)})^c)}{\mathrm{vol}(Z^{(1)}_0)}\right].
\end{align*}
The numerator has the following upper bound: letting $\rho_1(u) := \rho_{Z_0^{(1)}}(u)$ and $\rho_2(u) := \rho_{Z_0^{(2)}}(u)$ and using the fact that for \(x \neq 0\), $x \in Z_0^{(i)}$ if and only if $\rho_i(x/|x|) \geq |x|$, we have
\begin{align*}
\mathrm{vol}(Z_0^{(1)} \cap (Z_0^{(2)})^c) &=  \int_{\RR^d} \mathbf{1}_{\{x \in Z_0^{(1)}\}}\mathbf{1}_{\{ x \notin Z_0^{(2)}\}} \dint x = \int_{\mathbb{S}^{d-1}} \int_{0}^{\infty} r^{d-1} \mathbf{1}_{\{\rho_1(u) \geq r\}}\mathbf{1}_{\{ \rho_2(u) < r\}} \dint r \dint \sigma(u) \nonumber\\
&= d^{-1}\int_{\mathbb{S}^{d-1}} \mathbf{1}_{\{\rho_1(u) \geq \rho_2(u)\}}(\rho_1(u)^d - \rho_2(u)^d) \dint \sigma(u) \nonumber \\
&= d^{-1}\int_{\mathbb{S}^{d-1}} \mathbf{1}_{\{\rho_1(u) \geq \rho_2(u)\}}(\rho_{1}(u) - \rho_{2}(u))\sum_{k=0}^{d-1}\rho_{1}(u)^{d-k-1}\rho_{2}(u)^{k} \dint \sigma(u) \nonumber \\
&\leq \int_{\mathbb{S}^{d-1}}\left(\rho_{1}(u)-\rho_{2}(u)\right)_+\rho_{1}(u)^{d-1} \dint \sigma(u).
\end{align*}
By H\"older's inequality, 
\begin{align}\label{e:vol_intersection_bound}
\int_{\mathbb{S}^{d-1}}\left(\rho_{1}(u)-\rho_{2}(u)\right)_+\rho_{1}(u)^{d-1} \dint \sigma(u) &\leq \left( \int_{\mathbb{S}^{d-1}} \left(\rho_{1}(u)-\rho_{2}(u)\right)_+^d \dint \sigma(u) \right)^{\frac{1}{d}}\left(\int_{\mathbb{S}^{d-1}} \rho_{1}(u)^{d} \dint \sigma(u)\right)^{\frac{d-1}{d}} \nonumber \\
&= \left( \int_{\mathbb{S}^{d-1}} \left(\rho_{1}(u)-\rho_{2}(u)\right)_+^d \dint \sigma(u) \right)^{\frac{1}{d}}\left(d\mathrm{vol}(Z_0^{(1)})\right)^{\frac{d-1}{d}}.
\end{align}
Then, again by H\"older's and Jensen's inequalities, 
\begin{align*}
\EE\left[\frac{\mathrm{vol}(Z_0^{(1)} \cap (Z_0^{(2)})^c)}{\mathrm{vol}(Z^{(1)}_0)}\right] &\leq d^{\frac{d-1}{d}}\EE\left[\left(\frac{\int_{\mathbb{S}^{d-1}} \left(\rho_{1}(u)-\rho_{2}(u)\right)_+^d \dint \sigma(u)}{\mathrm{vol}(Z_0^{(1)})}\right)^{\frac{1}{d}}\right] \\
&\leq d^{\frac{d-1}{d}}\EE\left[\mathrm{vol}(Z_0^{(1)})^{-\frac{1}{d-1}}\right]^{\frac{d-1}{d}}\EE\left[\int_{\mathbb{S}^{d-1}} \left(\rho_{1}(u)-\rho_{2}(u)\right)_+^d \dint \sigma(u)\right]^{\frac{1}{d}}.
\end{align*}
Then, by Proposition \ref{thm:int_bnd_p} for $p = d$ and $\gamma = 1$,

\begin{align*}
     \EE\left[\int_{\mathbb{S}^{d-1}}\left(\rho_{1}(u)-\rho_{2}(u)\right)_+^d \dint \sigma(u)\right] 
     &\leq d! C_1(\phi_1, \phi_2) W_{\mathrm{geo}}(\phi_1, \phi_2),
\end{align*}
where $C_1(\phi_1, \phi_2) =  \int_{S^{d-1}} (h(\Pi_1,u) + h(\Pi_2,u))^{-1} h(\Pi_1,u)^{-d}  \sigma(\dint u)$, \(W_{\text{geo}}(\phi_1,\phi_2) \geq \mathbb{E}[\|V_1-V_2\|]\) by the choice of the optimal coupling \(\Phi\) of \(\phi_1,\phi_2\), and \(d!=\Gamma(d+1)\).

For the other expectation, we have by Theorem 10.4.1, equations (10.4) and (10.44) in \cite{weil} that for $d \geq 2$,
\begin{align*}
\EE\left[\mathrm{vol}(Z_0^{(1)})^{-\frac{1}{d-1}}\right] &= \frac{1}{\EE[\mathrm{vol}(Z^{(1)})]}\EE[\mathrm{vol}(Z^{(1)})^{\frac{d-2}{d-1}}] \leq \EE[\mathrm{vol}(Z^{(1)})]^{-\frac{1}{d-1}} = \mathrm{vol}(\Pi_1)^{\frac{1}{d-1}},
\end{align*}
where $Z^{(1)}$ is the typical cell of $\mathcal{P}_1(1)$ and we have applied Jensen's inequality using concavity of $x \mapsto x^{\frac{d-2}{d-1}}$. 
By Theorem 2 in \cite{McMullen1991} and \eqref{eq:hZ}, and recalling that $\sigma$ is the non-normalized spherical Lebesgue measure, we have 
\begin{align}\label{e:volPi-upperbnd}
\mathrm{vol}(\Pi_1) \leq \frac{1}{d!} \left(\frac{1}{\kappa_{d-1}}\int_{\mathbb{S}^{d-1}} h(\Pi_1,u) \dint \sigma(u)\right)^d 
=
 \frac{1}{d!} \left(\frac{1}{2\kappa_{d-1}}\int_{\mathbb{S}^{d-1}} \int_{\mathbb{S}^{d-1}} |\langle u,v \rangle| \dint \sigma(u) \dint \phi_1(v)\right)^d 
 = \frac{1}{d!},    
\end{align}
and thus,
\begin{align*}
\EE\left[\frac{\mathrm{vol}(Z_0^{(1)} \cap (Z_0^{(2)})^c)}{\mathrm{vol}(Z^{(1)}_0)}\right] \leq    d^{\frac{d-1}{d}} C_1(\phi_1, \phi_2)^{\frac{1}{d}} W_{\mathrm{geo}}(\phi_1,\phi_2)^{\frac{1}{d}}.  
\end{align*}
Similarly,

\begin{align*}
 \EE\left[\frac{\mathrm{vol}(Z_0^{(2)} \cap (Z_0^{(1)})^c)}{\mathrm{vol}(Z^{(2)}_0)}\right]  \leq  d^{\frac{d-1}{d}} C_2(\phi_1, \phi_2)^{\frac{1}{d}} W_{\mathrm{geo}}(\phi_1,\phi_2)^{\frac{1}{d}},  
\end{align*}
where $C_2(\phi_1, \phi_2) := \int_{S^{d-1}} (h(\Pi_1,u) + h(\Pi_2,u))^{-1} h(\Pi_2,u)^{-d}  \sigma(\dint u)$. 

Finally, by the same argument in the proof of Theorem \ref{thm:Wass_p_bnd_new}, there is a finite constant $C(\phi_1, d)$ such that for all $\phi_2$ satisfying assumption \eqref{e:closetophi1-bnd},
\begin{align*}
    \EE \left[\int_{\mathbb{R}^d} \left|\hat{g}^{(1)}_{\gamma,n}(x) - \hat{g}^{(2)}_{\gamma,n}(x)\right| \dint x\right] &\leq C(\phi_1, d) W_{\mathrm{geo}}(\phi_1,\phi_2)^{\frac{1}{d}}.
\end{align*}
\end{proof}

The preceding proposition provides an upper bound that converges to
zero as the Wasserstein distance between the directional
distributions tends to zero. On the other hand, as $n\to\infty$, an
appropriate scaling of $\gamma$ with $n$ yields consistency of both
estimators. The next proposition provides an upper bound that
captures both regimes and also depends explicitly on the Wasserstein
distance between the directional distributions.

 Before stating the result, we give two lemmas that will be used in the proof of \Cref{prop:density_bound_2} to obtain a constant that depends only on \(\phi_1\) when \(\phi_2\) is close to \(\phi_1\), similar to the constant in \Cref{prop:density-robust}. The following notation will be used below. For a convex body \(M \subset \mathbb{R}^d\) containing \(0\), let \(R_0(M)\) denote the radius of the smallest ball centered at \(0\) that contains \(M\) and let \(r_0(M)\) denote the radius of the largest ball centered at \(0\) that is contained in \(M\).
    Let \(M^{\circ}\) denote the polar of \(M\) which is defined by
    \begin{align*}
        M^{\circ} := \{x \in \mathbb{R}^d:\langle x, y \rangle \leq 1, \, \forall y \in M\}.
    \end{align*}
    Recall that for a hyperplane \(H(u,t)\), the closed half-space containing \(0\) is denoted by \(H^-(u,t)\).
    For any \((u_1,\ldots,u_m) \in (S^{d-1})^m\), let 
    \begin{align*}
        P(u_1,\ldots, u_m):= \bigcap_{i=1}^{m} H^-(u_i,1)
    \end{align*}
    denote the intersection of the closed half-spaces associated to \(H^-(u_i,1)\), \(i=1,\dots,m\).

    \begin{lemma} \label{lem:constant_local_holder}
    Let \(\phi_0\) be an even nondegenerate probability measure on
    \(S^{d-1}\).
    Then there exist
    \(\varepsilon_0>0\), \(a_0>0\), \(0<\Delta< \infty\) and pairwise disjoint open sets
    \(U_1,\ldots,U_{2d}\subset S^{d-1}\), depending only on \(\phi_0\), such
    that the following hold:

    \begin{enumerate}
        \item For every probability measure \(\phi\) on \(S^{d-1}\) satisfying \(W_{\mathrm{geo}}(\phi,\phi_0)<\varepsilon_0\), one has \(\phi(U_i)>a_0\) for all \(i \in \{1,\ldots,2d \}\).
        \item For every \((u_1,\ldots,u_{2d}) \in U_1 \times \cdots \times U_{2d}\), \(P(u_1,\dots, u_{2d})\) is a bounded polytope, contains \(0\) in its interior, and satisfies
                \begin{align} \label{eq:polytope}
                R_0\biggl(P(u_1,\ldots,u_{2d}) \biggr) \leq \Delta.
                \end{align}
    \end{enumerate}
    \end{lemma}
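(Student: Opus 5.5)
The plan is to first find $2d$ points $w_1,\dots,w_{2d}$ in the support of $\phi_0$ for which the polytope $P(w_1,\dots,w_{2d})$ is bounded. We then thicken these points to small open caps and use stability of boundedness under perturbation. Finally, we use a Wasserstein-versus-mass estimate to control $\phi(U_i)$.

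\textbf{Step 1: choosing the centers.} Since $\phi_0$ is nondegenerate, its support is not contained in any great subsphere, so it spans $\mathbb{R}^d$. Choose linearly independent $w_1,\dots,w_d\in\operatorname{supp}\phi_0$. Because $\phi_0$ is even, the antipodes $w_{d+i}:=-w_i$ also lie in the support.

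For every $x\neq0$ there is an $i$ with $\langle x,w_i\rangle\neq0$, so $\max_j\langle x,w_j\rangle>0$. By compactness of $S^{d-1}$,
\[
c:=\min_{x\in S^{d-1}}\max_{j\le 2d}\langle x,w_j\rangle>0 .
\]
Hence $P(w_1,\dots,w_{2d})\subset c^{-1}B_d$, and this polytope contains the unit ball.

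\textbf{Step 2: the caps and Part 2.} Let $\delta:=\min\{c/2,\ \tfrac12\min_{i\ne j}\|w_i-w_j\|\}>0$, and let $U_i$ be the open geodesic cap of radius $\delta$ around $w_i$. These caps are pairwise disjoint, since chordal distance is at most geodesic distance. For $u_j\in U_j$ and $x\in S^{d-1}$ we have
\[
\langle x,u_j\rangle\ge\langle x,w_j\rangle-\|u_j-w_j\|,
\]
so $\max_j\langle x,u_j\rangle\ge c/2$. Thus $P(u_1,\dots,u_{2d})\subset (2/c)B_d$, and we may take $\Delta:=2/c$. Boundedness holds, and $B_d\subset P$ gives $0$ in the interior. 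Being a finite intersection of half-spaces, $P$ is a polytope.

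\textbf{Step 3: Part 1 (the main obstacle).} Here weak closeness must be converted into a lower bound on cap mass that is uniform over the Wasserstein ball. For each $i$, take the $1$-Lipschitz (for geo) function
\[
f_i(v):=\bigl(\delta/2-\mathrm{geo}(v,w_i)\bigr)_+ ,
\]
which satisfies $f_i\le(\delta/2)\mathbf 1_{U_i}$. Kantorovich--Rubinstein duality gives $\int f_i\,d\phi\ge\int f_i\,d\phi_0-W_{\mathrm{geo}}(\phi,\phi_0)$. Since $w_i$ is in the support of $\phi_0$, the quantity $m_i:=\int f_i\,d\phi_0$ is strictly positive.

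Set $m:=\min_i m_i$, $\varepsilon_0:=m/2$, and $a_0:=m/(2\delta)$ (any smaller positive number also works). Then for $W_{\mathrm{geo}}(\phi,\phi_0)<\varepsilon_0$,
\[
\phi(U_i)\ge\tfrac{2}{\delta}\int f_i\,d\phi>\tfrac{2}{\delta}\cdot\tfrac m2=\tfrac{m}{\delta}\ge a_0 .
\]
All the constants $\varepsilon_0,a_0,\Delta$ and the sets $U_i$ depend only on $\phi_0$, as the lemma requires.
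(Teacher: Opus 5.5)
Your proof is correct and follows essentially the same route as the paper: the same centers $\pm e_i$ in $\operatorname{supp}\phi_0$, geodesic caps around them, stability of the radius bound for the polar $P$ of $\mathrm{conv}(u_1,\dots,u_{2d})$, and a Wasserstein lower bound on cap masses. The differences are only in execution. Your $c=\min_{x}\max_j\langle x,w_j\rangle$ is exactly the paper's $a'=r_0(\mathrm{conv}(v_1,\dots,v_{2d}))$, so you get the same $\Delta$, but you make the continuity step explicit via $\langle x,u_j\rangle\ge\langle x,w_j\rangle-\|u_j-w_j\|$. For Part 1 you use a Lipschitz tent function and the easy half of Kantorovich--Rubinstein duality instead of the paper's nested-cap coupling estimate $\int\mathrm{geo}\,d\pi\ge\delta\,\pi(V_i\times U_i^c)$.
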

    \begin{proof}
    Since \(\phi_0\) is nondegenerate, we can choose linearly independent vectors
    \(e_1,\ldots,e_d\in \operatorname{supp}\phi_0\). 
    By evenness of $\phi_0$, \(-e_1,\ldots,-e_d\in \operatorname{supp}\phi_0\) as well. 
    Set \(v_i=e_i\), \(v_{d+i}=-e_i\), \(i=1,\ldots,d\).

    We first observe that \(P(v_1,\dots,v_{2d})\) is a bounded polyhedron. 
    Indeed, note that $0$ is in the interior of \(\mathrm{conv}(v_1,\ldots,v_{2d})\) and so there exists a radius \(s>0\) such that \(sB_d \subset \mathrm{conv}(v_1,\dots,v_{2d})\).
    Since the polar of \(\mathrm{conv}(v_1,\dots,v_{2d})\) is \(P(v_1,\ldots,v_{2d})\), the latter is contained in \(s^{-1} B_d\) and is thus bounded.

    Furthermore, we use the following two facts:
    (a) the map \(L_1:(u_1,\ldots,u_{2d}) \mapsto \mathrm{conv}(u_1,\ldots,u_{2d})\) from \((S^{d-1})^{2d}\) to the space of compact subsets of \(\mathbb{R}^d\) is continuous;
    (b) the map \(L_2:\mathcal{K}_0 \to \mathbb{R}\) defined by \(L_2(K) = r_0(K)\) for all \(K \in \mathcal{K}_0\) is continuous, where \(\mathcal{K}_0\) is the space of convex bodies in \(\mathbb{R}^d\) that contain \(0\) in their interior.
    Since \(\mathrm{conv}(v_1,\ldots,v_{2d}) \in \mathcal{K}_0\),  the composition of the maps \(L_2 \circ L_1\) is well defined in a neighborhood of \((v_1,\ldots,v_{2d})\).
    Consider the inverse image \(U' = (L_2 \circ L_1)^{-1}((a'/2,3a'/2))\) restricted to the neighborhood of \((v_1,\ldots,v_{2d})\), where \(a':=r_0(\mathrm{conv}(v_1,\ldots,v_{2d}))>0\).
    Note that \(U'\) is an open set containing \((v_1,\ldots,v_{2d})\) and that \(r_0(\mathrm{conv}(u_1,\ldots,u_{2d})) > a'/2>0\) for all \((u_1,\ldots,u_{2d}) \in U'\).
    Thus, \(P(u_1,\ldots,u_{2d}) \subset (2/a')B_d\) for all \((u_1,\ldots,u_{2d}) \in U'\), since \(P(u_1,\ldots,u_{2d})\) is the polar of \(\mathrm{conv}(u_1,\ldots,u_{2d})\).

    Thus, we can choose \(r>0\) small enough such that the open balls \(U_i:=B_{\mathrm{geo}}(v_i,r)\) centered at \(v_i\) of radius \(r\) for \(i=1,\ldots,2d\),
    \begin{itemize}
        \item are pairwise disjoint with \(\prod_{i=1}^{2d} U_i \subset U'\), and
        \item  \(\phi_0(U_i)>0\) since \(v_i \in \operatorname{supp}\phi_0\) for all \(i \in\{1,\ldots, 2d\}\).
    \end{itemize}

    Since \(P(u_1,\ldots,u_{2d}) \subset (2/a')B_d\) for all \(u_i \in U_i\), \eqref{eq:polytope} is satisfied with \(\Delta=2/a'\).

    It remains to prove item 1 of the statement.
    Let \(V_i:=B_{\mathrm{geo}}(v_i,r/2)\) be the open ball in \(S^{d-1}\) of radius \(\delta:= r/2\).
    Then \(V_i\subset U_i\) and \(\mathrm{geo}(V_i,U_i^c)\ge\delta\), where $\mathrm{geo}$ is the geodesic distance defined by \eqref{eq:geo}. 
    Since
    \(v_i\in\operatorname{supp}\phi_0\),
    \[
    \alpha:=\min_{1\le i\le 2d}\phi_0(V_i)>0 .
    \]

    If \(x\in V_i\) and
    \(y\notin U_i\), then \(\mathrm{geo}(x,y)\ge\delta\). Hence, for any coupling \(\pi\) of \(\phi_0\) and \(\phi\),
    \begin{align*}
        \int \mathrm{geo}(x,y) \pi(dx,dy) 
        & \geq \delta \pi(V_i \times U_i^c)
        = \delta \bigl(\pi(V_i \times S^{d-1})- \pi(V_i \times U_i)\bigr)\\
        & \geq \delta \bigl(\phi_0(V_i) - \phi(U_i) \bigr)
    \end{align*}
    which gives
    \[
    \phi(U_i)
    \ge 
    \phi_0(V_i)
    -\frac1\delta\int \mathrm{geo}(x,y)\,\pi(dx,dy).
    \]
    Taking the infimum over all couplings gives
    \[
    \phi(U_i)\ge \phi_0(V_i)-\frac{W_{\mathrm{geo}}(\phi,\phi_0)}{\delta}
    \ge \alpha-\frac{W_{\mathrm{geo}}(\phi,\phi_0)}{\delta}.
    \]
    Now set
    \[
    \varepsilon_0:=\frac{\alpha\delta}{2},
    \qquad
    a_0:=\frac{\alpha}{2}.
    \]
    If \(W_{\mathrm{geo}}(\phi,\phi_0)<\varepsilon_0\), then \(\phi(U_i)>a_0\) for all \(i\).
    \end{proof}

\begin{lemma} \label{lem:2constant_local_holder}
    Let \(\phi_0\) be a nondegenerate even probability measure on \(S^{d-1}\), let \(\gamma>0\) and let \(p \in [1,\infty)\).
    Then, there exist constants \(\varepsilon_0:=\varepsilon_0(\phi_0)\) and \(b_p(\phi_0)\), depending only on \(\phi_0\) and \(p\), such that, for every even probability measure \(\phi\) satisfying \(W_{\mathrm{geo}}(\phi,\phi_0)< \varepsilon_0\), we have 
    \begin{align}
        \mathbb{E}[R_0(Z_0(\phi))^p] \leq \frac{b_p(\phi_0)}{\gamma^p},
    \end{align}
    where \(Z_0(\phi)\) is the zero cell of the stationary Poisson hyperplane process with parameters \(\gamma\) and \(\phi\).
\end{lemma}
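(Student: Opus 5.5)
We will derive the moment bound from \Cref{lem:constant_local_holder} by controlling $R_0(Z_0(\phi))$ through the first time each of the $2d$ sets $U_i$ has been "hit" by a hyperplane. By the scaling property of the zero cell ($Z_0$ with intensity $\gamma$ equals in law $\gamma^{-1}$ times the zero cell with intensity $1$), it suffices to treat $\gamma=1$ and then multiply by $\gamma^{-p}$. So fix $\gamma=1$. Take $\varepsilon_0,a_0,\Delta,U_1,\dots,U_{2d}$ as given by \Cref{lem:constant_local_holder} applied to $\phi_0$, and let $\phi$ satisfy $W_{\mathrm{geo}}(\phi,\phi_0)<\varepsilon_0$, so that $\phi(U_i)>a_0$ for every $i$.

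We use the representation from the proof of the Hausdorff-convergence proposition. Let $T_1<T_2<\cdots$ be a Poisson process on $(0,\infty)$ of intensity $2$, and let $U_j\sim\phi$ be i.i.d. directions, independent of the $T_j$. Then $Z_0=\bigcap_j H^-(U_j,T_j)$. By the marking theorem, the points $T_j$ whose direction lies in $U_i$ form a Poisson process of intensity $2\phi(U_i)>2a_0$. Define $\tau_i$ as the first such $T_j$ and set $\tau:=\max_i\tau_i$. For each $i$ choose the corresponding direction $u_i\in U_i$. Since $H^-(u_i,\tau_i)\subset H^-(u_i,\tau)=\tau H^-(u_i,1)$, we get
\[
Z_0\subset \bigcap_{i=1}^{2d}H^-(u_i,\tau_i)\subset \tau\, P(u_1,\dots,u_{2d}),
\]
and item 2 of \Cref{lem:constant_local_holder} then gives $R_0(Z_0)\le \Delta\,\tau$. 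The sets $U_i$ are disjoint, so the thinned processes are independent; we do not actually need this, however.

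Each $\tau_i$ is exponential with rate $2\phi(U_i)\ge 2a_0$, so it is stochastically dominated by an $\mathrm{Exp}(2a_0)$ variable. Using $\tau^p\le\sum_i\tau_i^p$, we obtain
\[
\mathbb{E}[\tau^p]\le 2d\,\Gamma(p+1)(2a_0)^{-p}.
\]
Hence $b_p(\phi_0):=\Delta^p\,2d\,\Gamma(p+1)(2a_0)^{-p}$ works, and it depends only on $\phi_0$ and $p$.

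The main obstacle is a measurability detail: the directions $u_i$ are random, and each must lie in $U_i$ for the inclusion to hold. This holds by construction, since $u_i$ is the mark of the first point of the $i$-th thinning. The inclusion is also pointwise in $\omega$, so item 2 of the lemma applies directly to every realization. A secondary check is the scaling reduction. It can be avoided by working with general $\gamma$ directly: the thinned intensity is then $2\gamma\phi(U_i)$, which yields exactly the factor $\gamma^{-p}$.
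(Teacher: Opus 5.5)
Your proposal is correct and is essentially the paper's argument: the paper inserts the $\phi$-independent constants $a_0,\Delta,U_1,\dots,U_{2d}$ from \Cref{lem:constant_local_holder} into the proof of Hug--Schneider's Theorem~6.3.1 to obtain $\mathbb{P}[R_0(Z_0(\phi))\ge x]\le 2d\,e^{-\gamma a_0 x/\Delta}$ and then integrates this tail bound. That tail bound is exactly your inclusion $Z_0\subset\tau P(u_1,\dots,u_{2d})$, giving $R_0\le\Delta\max_i\tau_i$, combined with a union bound over the $2d$ thinned processes. Your first-hitting-time version is self-contained, and because it uses the exact thinned rate $2\gamma\phi(U_i)$, it even gives a constant smaller by a factor $2^p$.
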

\begin{proof}
    The proof is the same as that of \cite[Theorem~6.3.1]{hugPoissonHyperplaneTessellations2024} with the following choice of constants \(a\), \(\Delta\) and disjoint open sets \(U_1,\ldots,U_{2d}\) of \(S^{d-1}\) that do not depend on $\phi$.

    Taking \(U_i\), \(i =1,\ldots, 2d\), \(a:=a_0, \varepsilon_0\) and \(\Delta\) to be the disjoint open sets and constants associated to \(\phi_0\) that \Cref{lem:constant_local_holder} provides and substituting them in the proof of \cite[Theorem~6.3.1]{hugPoissonHyperplaneTessellations2024}, we obtain
    \begin{align}
        \mathbb{P}[R_0(Z_0(\phi)) \geq x] \leq 2d e^{- \gamma \frac{a}{\Delta}x}, \quad x \geq 0,
    \end{align}
    whenever \(W_{\mathrm{geo}}(\phi,\phi_0) < \varepsilon_0\).
    Thus,
    \begin{align*}
        \mathbb{E}[R_0(Z_0(\phi))^p] = \int_{0}^{\infty} \mathbb{P}[R_0(Z_0(\phi))^p > t]\, dt \leq 2 d \int_{0}^{\infty} e^{- \gamma \frac{a}{\Delta} t^{1/p}} \, dt = 2d \Gamma(p+1) \biggl(\frac{\Delta}{\gamma a} \biggr)^p.
    \end{align*}
    Since \(a, \Delta\) depend only on \(\phi_0\), the proof is complete.
\end{proof}

We now state the result.

\begin{proposition}\label{prop:density_bound_2}
    Let $\hat{f}^{(1)}_{\gamma, n}$ and $\hat{f}^{(2)}_{\gamma, n}$ be the estimators of a density $f$ as defined in Proposition \ref{prop:density-robust}. 
    Assume that the hypotheses of Proposition \ref{prop:density-robust} are satisfied.
    Further assume that $f$ is globally $L$-Lipschitz and $\mathrm{supp}(f) \subseteq [0,1]^d$, and that \(\phi_2\) satisfies
    \begin{align}\label{eq:phi_2_close_condition}
        W_{\mathrm{geo}}(\phi_1,\phi_2) 
        <  
        \min\biggl\{\varepsilon_0(\phi_1), \min_{u \in S^{d-1}} h(\Pi_1,u)\biggr\},
    \end{align}
    where \(\varepsilon_0(\phi_1)\) is the constant from \Cref{lem:2constant_local_holder}.
    Then there exist random probability densities
    $\hat{g}_{\gamma,n}^{(1)}$ and
    $\hat{g}_{\gamma,n}^{(2)}$, defined on a common probability space,
    such that, for every fixed $x\in\mathbb{R}^d$,
    \[
    \hat{g}_{\gamma,n}^{(i)}(x)
    \overset{(d)}{=}
    \hat{f}_{\gamma,n}^{(i)}(x),
    \qquad i=1,2,
    \]
    and such that
    \begin{align*}
     \EE \left[\int_{[0,1]^d}\left|\hat{g}^{(1)}_{\gamma, n}(x) - \hat{g}^{(2)}_{\gamma, n}(x)\right| \dint x\right]
    &\leq \frac{\|f\|_{\infty}^{1/2}\gamma^{d/2}\left(\mathrm{vol}(\Pi_1)^{1/2} + \mathrm{vol}(\Pi_2)^{1/2}\right)}{n^{1/2}} + \frac{L \tilde{C}(\phi_1, d)}{\gamma}W_{\mathrm{geo}}(\phi_1,\phi_2)^{1/d}, 
\end{align*}
for some constant $\tilde{C}(\phi_1,d)$ depending only on $\phi_1$ and $d$, where the expectation is taken with respect to the random tessellations and $\mathcal{D}_n$.
\end{proposition}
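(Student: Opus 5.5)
We use the same coupling as in the proof of \Cref{prop:density-robust}. Fix an optimal coupling $\Phi$ of $\phi_1,\phi_2$, take the coupled unit-intensity zero cells $(Z_0^{(1)},Z_0^{(2)})$ from \eqref{eq:couplingZeroCells}, and define $\hat g^{(i)}_{\gamma,n}$ by \eqref{e:hatgn}, with the data $\mathcal{D}_n$ independent of the cells. The identity in distribution with $\hat f^{(i)}_{\gamma,n}(x)$ then holds exactly as before.

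\textbf{Splitting off the variance.} Write $\bar g^{(i)}(x):=\EE[\hat g^{(i)}_{\gamma,n}(x)\mid Z_0^{(1)},Z_0^{(2)}]$, which equals $\int f(y)\,\mathbf{1}\{y\in \gamma^{-1}Z_0^{(i)}+x\}\,\dint y/\mathrm{vol}(\gamma^{-1}Z_0^{(i)})$. The triangle inequality gives
\[
\EE\int_{[0,1]^d}|\hat g^{(1)}-\hat g^{(2)}| \le \sum_{i=1,2}\EE\int_{[0,1]^d}|\hat g^{(i)}-\bar g^{(i)}| + \EE\int_{[0,1]^d}|\bar g^{(1)}-\bar g^{(2)}| .
\]

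For the variance terms, first apply Cauchy--Schwarz on $[0,1]^d$ (which has unit volume) and then conditional Jensen. Since $\hat g^{(i)}(x)$ is an average of $n$ i.i.d.\ terms given the cell, its conditional variance is at most
\[
\frac{1}{n}\,\EE\bigl[\mathbf{1}\{X\in\cdot\}\,\mathrm{vol}^{-2}\bigr]\le \frac{\|f\|_\infty}{n\,\mathrm{vol}(\gamma^{-1}Z_0^{(i)})} .
\]
Hence this term is bounded by $\bigl(\|f\|_\infty\gamma^d\,\EE[\mathrm{vol}(Z_0^{(i)})^{-1}]/n\bigr)^{1/2}$. The identity $\EE[\mathrm{vol}(Z_0)^{-1}]=1/\EE[\mathrm{vol}(Z)]=\mathrm{vol}(\Pi_i)$, the zero-cell/typical-cell relation from \cite{weil} used in \Cref{prop:density-robust}, then yields the first summand of the statement.

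\textbf{Bias difference.} Substituting $y=x+z/\gamma$ gives
\[
\bar g^{(i)}(x)=\int f(x+z/\gamma)\,\mathbf{1}\{z\in Z_0^{(i)}\}\,\dint z/\mathrm{vol}(Z_0^{(i)}) .
\]
Subtract $f(x)$ times the integral of the difference of the normalized indicators, which is zero because both normalized indicators integrate to one. This gives
\[
|\bar g^{(1)}(x)-\bar g^{(2)}(x)|\le \int |f(x+z/\gamma)-f(x)|\,\Bigl|\tfrac{\mathbf{1}_{Z_0^{(1)}}(z)}{\mathrm{vol}(Z_0^{(1)})}-\tfrac{\mathbf{1}_{Z_0^{(2)}}(z)}{\mathrm{vol}(Z_0^{(2)})}\Bigr|\,\dint z .
\]
Using $|f(x+z/\gamma)-f(x)|\le L\|z\|/\gamma$ and $\|z\|\le R_0(Z_0^{(1)})\vee R_0(Z_0^{(2)})=:R$ on the support of the integrand, and integrating $x$ over $[0,1]^d$, we obtain
\[
\frac{L}{\gamma}\,\EE\Bigl[R\int\Bigl|\tfrac{\mathbf{1}_{Z_0^{(1)}}}{\mathrm{vol}(Z_0^{(1)})}-\tfrac{\mathbf{1}_{Z_0^{(2)}}}{\mathrm{vol}(Z_0^{(2)})}\Bigr|\Bigr].
\]
Bound the inner integral by \eqref{e:int_voldiff_bnd}. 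It remains to control $\EE\bigl[R\,\mathrm{vol}(Z_0^{(1)}\setminus Z_0^{(2)})/\mathrm{vol}(Z_0^{(1)})\bigr]$ and its symmetric counterpart.

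\textbf{Main obstacle: the extra factor $R$.} We follow the proof of \Cref{prop:density-robust}. By \eqref{e:vol_intersection_bound} the ratio is at most $d^{(d-1)/d}\,(\int(\rho_1-\rho_2)_+^d\,\dint\sigma)^{1/d}\,\mathrm{vol}(Z_0^{(1)})^{-1/d}$. Apply Hölder's inequality with three factors, with exponents $d$ for the radial-difference term and, say, $2d/(d-1)$ each for $R$ and for $\mathrm{vol}^{-1/d}$, whose total is $1/d+(d-1)/(2d)+(d-1)/(2d)=1$. This gives a product of three terms.

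The first term is $(\EE\int(\rho_1-\rho_2)_+^d\,\dint\sigma)^{1/d}\le (d!\,C_1(\phi_1,\phi_2)\,W_{\mathrm{geo}})^{1/d}$ by \Cref{thm:int_bnd_p}.

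The second term is $\EE[R^{2d/(d-1)}]^{(d-1)/(2d)}$. Since $R^q\le R_0(Z_0^{(1)})^q+R_0(Z_0^{(2)})^q$, \Cref{lem:2constant_local_holder} with $\gamma=1$ bounds it by a constant depending only on $\phi_1$; this is where \eqref{eq:phi_2_close_condition} is used, and it applies to both $\phi_1$ and $\phi_2$.

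The third term involves $\EE[\mathrm{vol}(Z_0^{(1)})^{-2/(d-1)}]$. Via the typical cell this equals $\EE[\mathrm{vol}(Z)^{1-2/(d-1)}]/\EE[\mathrm{vol}(Z)]$. For $d\ge3$ Jensen gives $\mathrm{vol}(\Pi_1)^{2/(d-1)}$, which is at most a dimensional constant by \eqref{e:volPi-upperbnd}. For $d=2$ the exponent is negative, so there I would instead choose the Hölder exponents slightly differently, for instance exponent $1/(d-1)$ on the volume term exactly as in \Cref{prop:density-robust}, paired with a larger moment of $R$, which the lemma still controls.

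Finally, as in \Cref{thm:Wass_p_bnd_new}, \eqref{eq:boundsOnh} gives $C_i(\phi_1,\phi_2)\le 6^{d}\,d\kappa_d\,C_1^{-d-1}$. This makes all constants depend only on $\phi_1$ and $d$, and absorbing them into $\tilde C(\phi_1,d)$ completes the argument.
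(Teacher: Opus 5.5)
\textbf{The proposal has a genuine gap at $d=2$.} For $d\ge 3$ your argument is sound. The statement, however, includes $d=2$ (it inherits $d\ge 2$ from Proposition~\ref{prop:density-robust}), and there your fallback does not work.

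To obtain $W_{\mathrm{geo}}(\phi_1,\phi_2)^{1/d}$ from Proposition~\ref{thm:int_bnd_p}, whose bound is linear in $\EE_\Phi\|V_2-V_1\|$ for every $p$, the factor $\bigl(\int(\rho_1-\rho_2)_+^d\,\dint\sigma\bigr)^{1/d}$ must carry H\"older exponent exactly $d$. A larger exponent $q$ only gives $W_{\mathrm{geo}}^{1/q}$. The factor $\mathrm{vol}(Z_0^{(1)})^{-1/d}$ with H\"older exponent $q'$ produces $\EE[\mathrm{vol}(Z_0^{(1)})^{-q'/d}]=\EE[\mathrm{vol}(Z)^{1-q'/d}]/\EE[\mathrm{vol}(Z)]$, with $Z$ the typical cell. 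The typical-cell identity plus Jensen controls this only when $q'/d\le 1$, i.e.\ $1/q'\ge 1/d$.

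For $d=2$ these two reciprocals already sum to $1/2+1/2=1$, so no exponent is left for $R$. Your ``exponent $1/(d-1)$ on the volume term exactly as in Proposition~\ref{prop:density-robust}'' is precisely this saturated case. Pairing it with any finite moment of $R$ is impossible; you would need $R\in L^\infty$. Saving your three-factor split would require a bound on $\EE[\mathrm{vol}(Z_0^{(i)})^{-s}]$ for some $s>1$, uniform over $\phi_2$ near $\phi_1$. That is a lower-tail estimate for the typical cell's volume, which neither the paper nor your proposal supplies.

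The missing idea is to keep the size factor attached to the same cell whose volume appears in the denominator. You use the global bound $\|z\|\le R=R_0(Z_0^{(1)})\vee R_0(Z_0^{(2)})$, which mixes the two cells. The paper instead uses $\|z\|\le\mathrm{diam}(Z_0^{(1)})$ on $Z_0^{(1)}\setminus Z_0^{(2)}$ and $\|z\|\le\mathrm{diam}(Z_0^{(1)}\cap Z_0^{(2)})$ on the intersection. This leads to \eqref{eq:two_parts}, where $\mathrm{diam}(Z_0^{(1)})$ multiplies $\mathrm{vol}(Z_0^{(1)}\setminus Z_0^{(2)})/\mathrm{vol}(Z_0^{(1)})$.

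A two-factor H\"older then leaves $\EE[\mathrm{diam}(Z_0^{(1)})^{d/(d-1)}\mathrm{vol}(Z_0^{(1)})^{-1/(d-1)}]$, a translation-invariant functional of a single zero cell. The typical-cell identity turns it into
$\EE[\mathrm{diam}(Z)^{d/(d-1)}\mathrm{vol}(Z)^{(d-2)/(d-1)}]/\EE[\mathrm{vol}(Z)]\le\kappa_d^{(d-2)/(d-1)}\mathrm{vol}(\Pi_1)\,\EE[\mathrm{diam}(Z)^d]$.
Here the volume weighting cancels the inverse volume instead of demanding a negative moment, and this works for $d=2$ as well. Finally, $\EE[\mathrm{diam}(Z)^d]$ is bounded using a copy of the typical cell contained in the zero cell together with Lemma~\ref{lem:2constant_local_holder}.

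For $d\ge 3$ your route is a legitimate alternative to the paper's. It replaces the joint typical-cell step and the typical-cell/zero-cell comparison with separate moment bounds: Lemma~\ref{lem:2constant_local_holder} for $R$, and Jensen for $\mathrm{vol}(Z_0^{(1)})^{-2/(d-1)}$.
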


\begin{proof}
For each $i = 1,2$, define $\hat{g}^{(i)}_{\gamma, n}$ as in \eqref{e:hatgn}.
Fix \(x \in [0,1]^d\).
Recall $\hat{g}^{(i)}_{\gamma, n}(x)$ has the same distribution as $\hat{f}^{(i)}_{\gamma, n}(x)$ for $i = 1,2$. Now, define for $i = 1, 2$,
\[
\bar g_\gamma^{(i)}(x)
:=
\mathbb E_{\mathcal D_n}
\left[
\hat g_{\gamma,n}^{(i)}(x)
\,\middle|\,
Z_0^{(i)}
\right]
=
\mathbb E_X
\left[
\frac{
\mathbf 1_{\{X\in\gamma^{-1}Z_0^{(i)}+x\}}
}{
\operatorname{vol}(\gamma^{-1}Z_0^{(i)}+x)
}
\,\middle|\,
Z_0^{(i)}
\right].
\]
By the triangle inequality, we have
\begin{align}\label{eq:three_parts}
   \EE \left[\left|\hat{g}^{(1)}_{\gamma, n}(x) - \hat{g}^{(2)}_{\gamma, n}(x)\right|\right] 
   &\leq \EE \left[\left|\hat{g}^{(1)}_{\gamma, n}(x) - \bar{g}^{(1)}_{\gamma}(x)\right|\right] + \EE\left[\left|\bar{g}^{(1)}_{\gamma}(x) - \bar{g}_{\gamma}^{(2)}(x)\right|\right] + \EE\left[\left|\bar{g}_{\gamma}^{(2)}(x) - \hat{g}^{(2)}_{\gamma, n}(x)\right|\right].
\end{align}
We bound the first and third terms on the right-hand side of \eqref{eq:three_parts} by conditioning on $Z_0^{(i)}$:

\begin{align*}
\EE\left[\left|\bar{g}_{\gamma}^{(i)}(x) - \hat{g}^{(i)}_{\gamma, n}(x)\right|\right] &= \EE\left(\EE_{\mathcal{D}_n}\left[\left|\EE_{X}\left[\frac{1_{\{X \in \frac{1}{\gamma}Z_0^{(i)} + x\}}}{\mathrm{vol}(\frac{1}{\gamma}Z_0^{(i)} + x)}\right] - \frac{1}{n} \sum_{j=1}^n \frac{1_{\{X_j \in \gamma^{-1}Z^{(i)}_0 + x\}}}{\mathrm{vol}(\gamma^{-1}Z_0^{(i)} + x)
    }\right|\right]\right) \\
    &= \frac{\gamma^d}{n} \EE\left(\frac{1}{\mathrm{vol}(Z_0^{(i)})} \EE_{\mathcal{D}_n}\left[ \left|n \EE_{X}\left[1_{\{X \in \frac{1}{\gamma}Z_0^{(i)} + x\}}\right] - \sum_{j=1}^n 1_{\{X_j \in \gamma^{-1}Z^{(i)}_0 + x\}}\right|\right]\right). 
\end{align*}
Note that conditioned on $Z_0^{(i)}$, the independence of the data samples means that the random sum $\sum_{j=1}^n 1_{\{X_j \in \gamma^{-1}Z^{(i)}_0 + x\}}$ has the distribution of a binomial random variable with $n$ trials and probability of success $\mathbb{P}_X(X \in \gamma^{-1} Z_0^{(i)} + x )$. 
By the Cauchy--Schwarz inequality, we have
\begin{align*}
\EE_{\mathcal{D}_n}\left[ \left|n \EE_{X}\left[1_{\{X \in \frac{1}{\gamma}Z_0^{(i)} + x\}}\right] - \sum_{j=1}^n 1_{\{X_j \in \gamma^{-1}Z^{(i)}_0 + x\}}\right|\right]    &\leq \sqrt{\mathrm{Var}\left(\sum_{j=1}^n 1_{\{X_j \in \gamma^{-1}Z^{(i)}_0 + x\}}\right)} \\
&\leq \sqrt{n \mathbb{P}_X(X \in \gamma^{-1} Z_0^{(i)} + x )} \leq \sqrt{n \|f\|_{\infty} \gamma^{-d} \mathrm{vol}(Z_0^{(i)})}.
\end{align*}
Thus,
\begin{align*}
\EE\left[\left|\bar{g}_{\gamma}^{(i)}(x) - \hat{g}^{(i)}_{\gamma, n}(x)\right|\right] & \leq \sqrt{\frac{\gamma^{d}\|f\|_{\infty}}{n}}\EE\left[\mathrm{vol}(Z_0^{(i)})^{-\frac{1}{2}}\right] \leq  \sqrt{\frac{\gamma^{d}\|f\|_{\infty}}{n}}\EE\left[\mathrm{vol}(Z_0^{(i)})^{-1}\right]^{1/2} =   \sqrt{\frac{\gamma^{d}\|f\|_{\infty}\mathrm{vol}(\Pi_i)}{n}},
\end{align*}
where in the last equality we have applied Theorem 10.4.1 in \cite{weil} and \cite[(10.4) and (10.44)]{weil}.

For the second term of \eqref{eq:three_parts}, a change of variables $y = \frac{z}{\gamma} + x$ gives
\begin{align*}
 \left|\bar{g}^{(1)}_{\gamma}(x) - \bar{g}_{\gamma}^{(2)}(x)\right| 
 &= \left|\int_{\RR^d} f(y) \left(\frac{1_{\{y \in \gamma^{-1}Z^{(1)}_0 + x\}}}{\mathrm{vol}(\gamma^{-1}Z^{(1)}_{0} + x)} - \frac{1_{\{y \in \gamma^{-1}Z^{(2)}_0 + x\}}}{\mathrm{vol}(\gamma^{-1}Z^{(2)}_{0}+x)}\right)\dint y\right| \\
   &= \left|\int_{\RR^d} \left(f\left(\frac{z}{\gamma} + x\right) - f(x) \right) \left(\frac{1_{\{z \in Z^{(1)}_0\}}}{\mathrm{vol}(Z^{(1)}_{0})} - \frac{1_{\{z \in Z^{(2)}_0\}}}{\mathrm{vol}(Z^{(2)}_{0})}\right) \dint z\right| .
\end{align*}
Note here that if the density $f$ is continuous at $x$, then this term converges to zero as $\gamma \to \infty$. Under the assumption that the density is $L$-Lipschitz, we further have
\begin{align} \label{eq:two_parts}
 &|\bar{g}^{(1)}_{\gamma}(x) - \bar{g}_{\gamma}^{(2)}(x)| \leq \frac{L}{\gamma}\int_{\RR^d} \|z\| \left|\frac{1_{\{z \in Z^{(1)}_0\}}}{\mathrm{vol}(Z^{(1)}_{0})} - \frac{1_{\{z \in Z^{(2)}_0\}}}{\mathrm{vol}(Z^{(2)}_{0})}\right|\dint z  \nonumber \\
  &\leq \frac{L}{\gamma} \left(\int_{\RR^d} \|z\| \left|\frac{1_{\{z \in Z^{(1)}_0 \cap Z^{(2)}_0\}}}{\mathrm{vol}(Z^{(1)}_{0})} - \frac{1_{\{z \in Z^{(1)}_0 \cap Z^{(2)}_0\}}}{\mathrm{vol}(Z^{(2)}_{0})}\right| + \|z\| \frac{1_{\{z \in Z^{(1)}_0 \cap (Z^{(2)}_0)^c\}}}{\mathrm{vol}(Z^{(1)}_{0})} + \|z\| \frac{1_{\{z \in (Z^{(1)}_0)^c \cap Z^{(2)}_0\}}}{\mathrm{vol}(Z^{(2)}_{0})} \dint z \right) \nonumber \\
  &\leq \frac{L}{\gamma} 
  \bigg(\mathrm{diam}(Z^{(1)}_0 \cap Z^{(2)}_0)
  \mathrm{vol}\bigl(Z_0^{(1)} \cap Z_0^{(2)}\bigr)
  \left(\frac{\mathrm{vol}(Z^{(1)}_0 \cap (Z_0^{(2)})^c) + \mathrm{vol}((Z^{(1)}_0)^c \cap Z_0^{(2)})}{\mathrm{vol}(Z^{(1)}_{0})\mathrm{vol}(Z^{(2)}_{0})}\right) \nonumber \\
  & \qquad \qquad + \mathrm{diam}(Z_0^{(1)})\frac{\mathrm{vol}(Z^{(1)}_0 \cap (Z^{(2)}_0)^c)}{\mathrm{vol}(Z^{(1)}_{0})}  + \mathrm{diam}(Z_0^{(2)})\frac{\mathrm{vol}((Z^{(1)}_0)^c \cap Z^{(2)}_0)}{\mathrm{vol}(Z^{(2)}_{0})} \bigg) \nonumber \\
  &\leq \frac{2L}{\gamma}\left(\mathrm{diam}(Z_0^{(1)})\frac{\mathrm{vol}(Z^{(1)}_0 \cap (Z^{(2)}_0)^c)}{\mathrm{vol}(Z^{(1)}_{0})}  + \mathrm{diam}(Z_0^{(2)})\frac{\mathrm{vol}((Z^{(1)}_0)^c \cap Z^{(2)}_0)}{\mathrm{vol}(Z^{(2)}_{0})} \right).
  \end{align}
By \eqref{e:vol_intersection_bound} and H\"older's inequality, the expectation of the first term in the parentheses of \eqref{eq:two_parts} satisfies
\begin{align}\label{e:Ediamvolfracbnd}
 \mathbb{E}\left[\mathrm{diam}(Z_0^{(1)})\frac{\mathrm{vol}(Z^{(1)}_0 \cap (Z^{(2)}_0)^c)}{\mathrm{vol}(Z^{(1)}_{0})}\right] &\leq d^{\frac{d-1}{d}}\mathbb{E}\left[\frac{\mathrm{diam}(Z_0^{(1)})}{\mathrm{vol}(Z^{(1)}_{0})^{\frac{1}{d}}}\left( \int_{\mathbb{S}^{d-1}} \left(\rho_{1}(u)-\rho_{2}(u)\right)_+^d \dint \sigma(u) \right)^{1/d}\right] \nonumber \\
 &\leq d^{\frac{d-1}{d}}\mathbb{E}\left[\frac{\mathrm{diam}(Z_0^{(1)})^{\frac{d}{d-1}}}{\mathrm{vol}(Z^{(1)}_{0})^{\frac{1}{d-1}}}\right]^{\frac{d-1}{d}}\mathbb{E}\left[\int_{\mathbb{S}^{d-1}} \left(\rho_{1}(u)-\rho_{2}(u)\right)_+^d \dint \sigma(u)\right]^{\frac{1}{d}}.
\end{align}
By Proposition \ref{thm:int_bnd_p} for $p = d$ and $\gamma = 1$,

\begin{align*}
     \EE\left[\int_{\mathbb{S}^{d-1}}\left(\rho_{1}(u)-\rho_{2}(u)\right)_+^d \dint \sigma(u)\right] 
     &\leq \Gamma(d+1) C_1(\phi_1, \phi_2) W_{\mathrm{geo}}(\phi_1, \phi_2),
\end{align*}
where $C_1(\phi_1, \phi_2) =  \int_{S^{d-1}} \frac{1}{(h(\Pi_1,u) + h(\Pi_2,u)) h(\Pi_1,u)^d}  \sigma(\dint u)$ and \(\mathbb{E}[\|V_1-V_2\|] \leq W_{\text{geo}}(\phi_1,\phi_2)\) by the choice of the optimal coupling \((V_1,V_2) \sim \Phi\) of \(\phi_1\) and \(\phi_2\).
For the other expectation in \eqref{e:Ediamvolfracbnd}, we have by Theorem 10.4.1, equations (10.4) and (10.44) in \cite{weil} that for $d \geq 2$,

\begin{align*}    
    \EE\left[\frac{\mathrm{diam}(Z_0^{(1)})^{\frac{d}{d-1}}}{\mathrm{vol}(Z_0^{(1)})^{\frac{1}{d-1}}}\right] 
    &= \frac{1}{\EE[\mathrm{vol}(Z^{(1)})]}\EE[\mathrm{diam}(Z^{(1)})^{\frac{d}{d-1}}\mathrm{vol}(Z^{(1)})^{1-\frac{1}{d-1}}] 
    \leq (\kappa_d)^{\frac{d-2}{d-1}}\mathrm{vol}(\Pi_1)\EE[\mathrm{diam}(Z^{(1)})^{d}],
\end{align*}
where $Z^{(1)}$ is the typical cell of $\mathcal{P}_1(1)$, and the inequality follows from the fact that $\mathrm{vol}(Z^{(1)}) \leq \kappa_d \mathrm{diam}(Z^{(1)})^d$. 

Note that analogous upper bounds hold for the expectation of the second term in the parentheses of \eqref{eq:two_parts}, and thus combining all these bounds and using \eqref{e:volPi-upperbnd} gives

\begin{align}
  \EE\left|\bar{g}^{(1)}_{\gamma}(x) - \bar{g}_{\gamma}^{(2)}(x)\right| 
  &\leq \frac{2L}{\gamma} d^{\frac{d-1}{d}} \biggl(\frac{\kappa_d}{d!}\biggr)^{\frac{d-2}{d}}
   \Bigg(\EE[\mathrm{diam}(Z^{(1)})^{d}]^{\frac{d-1}{d}}C_1(\phi_1,\phi_2)^{\frac{1}{d}} \nonumber \\
  &\qquad +\EE[\mathrm{diam}(Z^{(2)})^{d}]^{\frac{d-1}{d}}C_2(\phi_1,\phi_2)^{\frac{1}{d}}\Bigg)W_{\mathrm{geo}}(\phi_1,\phi_2)^{\frac{1}{d}}. \label{eq:density_bound}
  \end{align}
  By \cite[Theorem 6.2.13]{hugPoissonHyperplaneTessellations2024}, for each \(i \in \{1,2\}\), there exists a random polytope \(Z'\) having the same distribution as the typical cell \(Z^{(i)}\) and satisfying \(Z' \subseteq Z_0^{(i)}\) almost surely, where \(Z_0^{(i)}\) is the zero cell of \(\mathcal{P}_i(1)\).
  This implies that \(\mathbb{P}[\mathrm{diam}(Z')^{\alpha} \geq t] \leq \mathbb{P}[\mathrm{diam}(Z_0^{(i)})^{\alpha} \geq t]\) for any \(t>0, \alpha>0\).
  So, \(\mathbb{E}[\mathrm{diam}(Z^{(i)})^{d}] \leq \mathbb{E}[\mathrm{diam}(Z_0^{(i)})^{d}]\) for \(i=1,2\).
  The latter expectation is bounded above by \(2^d\mathbb{E}[R_0(Z_0^{(i)})^{d}]\).
  Since \(\phi_2\) satisfies \eqref{eq:phi_2_close_condition}, by \Cref{lem:2constant_local_holder} we have \(\mathbb{E}[R_0(Z_0^{(i)})^{d}] \leq b_d(\phi_1)\) for \(i = 1,2\), where \(b_d(\phi_1)\) depends only on \(\phi_1\) and \(d\) (recall that \(Z_0^{(i)}\) are the zero cells of Poisson hyperplane processes with unit intensity).
  Therefore,
  \begin{align*}
    \mathbb{E}[\mathrm{diam}(Z^{(i)})^d] \leq 2^d b_d(\phi_1), \qquad i \in \{1,2\}.
  \end{align*}
  Since \(\phi_1\) and \(\phi_2\) satisfy \eqref{e:closetophi1-bnd}, the quantities \(C_1(\phi_1,\phi_2)\) and \(C_2(\phi_1,\phi_2)\) in \eqref{eq:density_bound} are bounded uniformly by finite constants depending only on \(\phi_1\).

  Thus, for some constant \(\tilde{C}(\phi_1,d)\) that depends only on \(\phi_1,d\), we have
  \begin{align*}
      \EE\left|\bar{g}^{(1)}_{\gamma}(x) - \bar{g}_{\gamma}^{(2)}(x)\right| & \leq \frac{L \tilde{C}(\phi_1, d)}{\gamma}W_{\mathrm{geo}}(\phi_1,\phi_2)^{\frac{1}{d}}.
  \end{align*}
Combining the above bounds, integrating over $[0,1]^d$ and applying Fubini's theorem gives the final result.

\end{proof}

Combining the preceding two propositions gives the following corollary. 
\begin{corollary}
    Let $\hat{f}^{(1)}_{\gamma, n}$ and $\hat{f}^{(2)}_{\gamma, n}$ be the estimators of a density $f$ as defined in Proposition \ref{prop:density-robust}.
    Assume that the hypotheses of Proposition \ref{prop:density-robust} are satisfied.
    Further assume that $f$ is globally $L$-Lipschitz and $\operatorname{supp}(f) \subseteq [0,1]^d$, and that \(\phi_2\) satisfies \eqref{eq:phi_2_close_condition}.
    Then there exist random probability densities
    $\hat{g}_{\gamma,n}^{(1)}$ and
    $\hat{g}_{\gamma,n}^{(2)}$, defined on a common probability space,
    such that, for every fixed $x\in\mathbb{R}^d$,
    \[
    \hat{g}_{\gamma,n}^{(i)}(x)
    \overset{(d)}{=}
    \hat{f}_{\gamma,n}^{(i)}(x),
    \qquad i=1,2,
    \]
    and such that for constants $C(\phi_1, d)$ and \(\tilde{C}(\phi_1,d)\) depending only on $\phi_1$ and $d$ we have
    \begin{align*}
    &\EE \left[\int_{[0,1]^d} \left|\hat{g}^{(1)}_{\gamma, n}(x) - \hat{g}^{(2)}_{\gamma, n}(x)\right| \dint x \right] \\
    & \leq \min\left\{\frac{\gamma^{d/2} \|f\|_{\infty}^{1/2} \left(\mathrm{vol}(\Pi_1)^{1/2} + \mathrm{vol}(\Pi_2)^{1/2}\right)}{n^{1/2}} + \frac{L \tilde{C}(\phi_1, d)}{\gamma}W_{\mathrm{geo}}(\phi_1,\phi_2)^{\frac{1}{d}}, C(\phi_1, d)W_{\mathrm{geo}}(\phi_1,\phi_2)^{\frac{1}{d}}\right\}.
    \end{align*}
\end{corollary}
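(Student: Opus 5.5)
The corollary follows by putting the two previous propositions on one probability space. The plan is to check that the constructions in \Cref{prop:density-robust} and \Cref{prop:density_bound_2} produce the same pair of random densities, and then take the smaller of the two bounds.

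First, I fix an optimal coupling $\Phi$ of $\phi_1$ and $\phi_2$ for $W_{\mathrm{geo}}$. Using it, I build the coupled unit-intensity zero cells $(Z_0^{(1)},Z_0^{(2)})$ of \eqref{eq:couplingZeroCells}, together with an independent i.i.d.\ sample $\mathcal{D}_n$ from $f$. I then define $\hat g^{(i)}_{\gamma,n}$ by \eqref{e:hatgn}. This is exactly the construction used in both proofs. So for every fixed $x$ we have $\hat g^{(i)}_{\gamma,n}(x)\overset{(d)}{=}\hat f^{(i)}_{\gamma,n}(x)$, by the stationarity and scaling argument given there.

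Second, I note that \eqref{eq:phi_2_close_condition} implies \eqref{e:closetophi1-bnd}, because a strict inequality against the minimum gives $W_{\mathrm{geo}}(\phi_1,\phi_2)\le C_1$. Hence both propositions apply to this same pair $(\hat g^{(1)}_{\gamma,n},\hat g^{(2)}_{\gamma,n})$.

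For the first bound in the minimum, \Cref{prop:density_bound_2} gives it directly. For the second, I use that $[0,1]^d\subset\mathbb{R}^d$ and that the integrand is nonnegative. This yields
\[
\mathbb{E}\Bigl[\int_{[0,1]^d}\bigl|\hat g^{(1)}_{\gamma,n}-\hat g^{(2)}_{\gamma,n}\bigr|\,\dint x\Bigr]\le \mathbb{E}\Bigl[\int_{\mathbb{R}^d}\bigl|\hat g^{(1)}_{\gamma,n}-\hat g^{(2)}_{\gamma,n}\bigr|\,\dint x\Bigr]\le C(\phi_1,d)\,W_{\mathrm{geo}}(\phi_1,\phi_2)^{1/d},
\]
where the last step is \Cref{prop:density-robust}. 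Since both bounds hold for the same quantity, it is at most their minimum.

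The only point that needs care is that the two propositions really do use one common coupling. Each proof chooses "an optimal coupling" $\Phi$, so I must make sure they take the same choice. Neither proof depends on which optimal coupling is chosen; both only use $\mathbb{E}_\Phi\|V_1-V_2\|\le W_{\mathrm{geo}}(\phi_1,\phi_2)$. Fixing one $\Phi$ at the start therefore makes both estimates hold simultaneously, and this completes the argument.
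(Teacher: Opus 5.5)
Your proposal is correct and matches the paper. The paper proves the corollary in one line, by combining \Cref{prop:density-robust} and \Cref{prop:density_bound_2}, which use the same construction \eqref{e:hatgn}; your argument fills in exactly the checks that one line leaves implicit: \eqref{eq:phi_2_close_condition} implies \eqref{e:closetophi1-bnd}, one fixed optimal coupling serves both propositions, and the nonnegative integrand over $[0,1]^d$ is bounded by its integral over $\mathbb{R}^d$.
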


\section{Conclusion}

In this work, we have studied the stability of the distribution of the zero cell of a Poisson hyperplane process with respect to perturbations of its intensity measure. In the stationary case, quantitative bounds were obtained showing local H\"older continuity with respect to perturbations of the directional distribution of the process. These results have implications for the use of random hyperplane processes in modeling and statistical applications. In particular, in this paper we illustrated how the results could be used to show stability results for stationary Poisson hyperplane or STIT density estimators as introduced in \cite{OReillyTran2021}.

There are many directions for future work that build on these results. First, it would be interesting to extend the quantitative bounds to the non-stationary setting for Poisson hyperplane tessellations that can capture local relevant features in a dataset. 
Another direction is to investigate analogous continuity properties for other geometric objects associated with hyperplane tessellations, such as the typical cell or intersections of the tessellation with lower-dimensional affine subspaces.
Finally, the techniques developed here suggest further applications to supervised learning problems. In particular, one may study regression or classification estimators based on random hyperplane or STIT tessellations \cite{OReillyTran2021minimax}, where predictions are formed by local averaging within random cells. A natural extension of the present work would be to establish stability and robustness properties for such partition-based regression estimators under perturbations of the split directions.

\section*{Acknowledgements}
The authors thank Ilya Molchanov and Bartłomiej Błaszczyszyn for their helpful suggestions concerning the continuity results for zero cells.

Eliza O'Reilly acknowledges support from NSF Grant DMS-2402234.

Bharath Roy Choudhury acknowledges support from the Dioscuri Programme under grant no.~2021/03/H/ST1/00001, initiated by the Max Planck Society, jointly managed with the National Science Centre in Poland, and mutually funded by the Polish Ministry of Education and Science and the German Federal Ministry of Education and Research.

\section*{Author information}

\textbf{Gilles Bonnet}\\
 Bernoulli Institute for Mathematics, CogniGron\\
 University of Groningen\\
  \texttt{g.f.y.bonnet@rug.nl}

  \medskip

  \noindent \textbf{Eliza O'Reilly}\\
    Department of Applied Mathematics and Statistics\\
    Johns Hopkins University\\
    \texttt{eoreilly@jhu.edu}

    \medskip

    \noindent \textbf{Bharath Roy Choudhury}\\
    Center for Advanced Mathematical Research\\
    Faculty of Mathematics and Computer Science\\
    Jagiellonian University\\
    \texttt{bharath.roy.choudhury@uj.edu.pl}

\bibliographystyle{plain}
\bibliography{biblio}

@article{McMullen1991,
author = {McMullen, Peter},
journal = {Monatshefte für Mathematik},
number = {1},
pages = {47-54},
title = {Inequalities Between Intrinsic Volumes.},
url = {http://eudml.org/doc/178509},
volume = {111},
year = {1991},
}

@inproceedings{dasgupta2008random,
  title={Random projection trees and low dimensional manifolds},
  author={Dasgupta, Sanjoy and Freund, Yoav},
  booktitle={Proceedings of the fortieth annual ACM symposium on Theory of computing},
  pages={537--546},
  year={2008}
}

@article{devroye1985nonparametric,
  title={Nonparametric density estimation},
  author={Devroye, Luc},
  journal={The L\_1 View},
  year={1985},
  publisher={John Wiley}
}

@inproceedings{devroye1983distribution,
  title={Distribution-free exponential bound on the L1 error of partitioning estimates of a regression function},
  author={Devroye, Luc and Gy{\"o}rfi, L{\'a}szl{\'o}},
  booktitle={Proceedings of the Fourth Pannonian Symposium on Mathematical Statistics},
  pages={67--76},
  year={1983},
  organization={Akad{\'e}miai Kiad{\'o} Budapest}
}

@book{gyorfi2002distribution,
  title={A distribution-free theory of nonparametric regression},
  author={Gy{\"o}rfi, L{\'a}szl{\'o} and Kohler, Michael and Krzy{\.z}ak, Adam and Walk, Harro},
  year={2002},
  publisher={Springer}
}

@book{hugPoissonHyperplaneTessellations2024,
  title = {Poisson {{Hyperplane Tessellations}}},
  author = {Hug, Daniel and Schneider, Rolf},
  year = {2024},
  series = {Springer {{Monographs}} in {{Mathematics}}},
  publisher = {Springer Nature Switzerland},
  address = {Cham},
  doi = {10.1007/978-3-031-54104-9},
  urldate = {2024-06-03},
  copyright = {https://www.springernature.com/gp/researchers/text-and-data-mining},
  isbn = {978-3-031-54103-2 978-3-031-54104-9},
  langid = {english},
}

@article{breiman2001random,
  title={Random forests},
  author={Breiman, Leo},
  journal={Machine learning},
  volume={45},
  number={1},
  pages={5--32},
  year={2001},
  publisher={Springer}
}

@inproceedings{roy2008mondrian,
  title={The {M}ondrian process},
  author={Roy, Daniel M and Teh, Yee Whye},
  booktitle={Proceedings of the 21st International Conference on Neural Information Processing Systems},
  pages={1377--1384},
  year={2008}
}

@inproceedings{lakshminarayanan2014mondrian,
  title={Mondrian forests: Efficient online random forests},
  author={Lakshminarayanan, Balaji and Roy, Daniel M and Teh, Yee Whye},
  booktitle={Advances in neural information processing systems},
  pages={3140--3148},
  year={2014}
}

@article{Nagel2005,
	Author = {Werner Nagel and Viola Weiss},
	Journal = {Advances in Applied Probability},
	Pages = {859-883},
	Title = {Crack {STIT} Tessellations: Characterization of stationary random tessellations stable with respect to iteration},
	Volume = {37},
	Year = {2005}}

@article{Thale2013Poisson,
	Author = {Tomasz Schreiber and Christoph Th{\"a}le},
	Journal = {Bernoulli},
	Number = {5A},
	Pages = {1637--1654},
	Title = {Geometry of iteration stable tessellations: Connection with {P}oisson hyperplanes},
	Volume = {19},
	Year = {2013}}

@book{weil,
	Address = {Berlin},
	Author = {Rolf Schneider and Wolfgang Weil},
	Publisher = {Springer-Verlag},
	Series = {Probability and {I}ts {A}pplications},
	Title = {Stochastic and Integral Geometry},
	Year = {2008}}

@article{Rahimi,
  title={Random Features of Large-Scale Kernel Machines},
  author={Ali Rahimi and Ben Recht},
  journal={Proceeding
NIPS'07 Proceedings of the 20th International Conference on Neural Information Processing Systems},
  volume={},
  number={},
  pages={1177-1184},
  year={2007},
  publisher={}
}

@book{MolchanovBook,
  title={Theory of Random Sets},
  author={Ilya Molchanov},
  volume={87},
  year={2017},
  publisher={Springer}
}

@article{OReillyTran2021,
  title={Stochastic Geometry to Generalize the {M}ondrian Process},
  author={Eliza O'Reilly and Ngoc Mai Tran},
  journal={SIAM Journal on Mathematics of Data Science},
  volume={4},
  number={2},
  pages={531-552},
  year={2022}
}

@book{villaniOptimalTransportOld2009,
  title = {Optimal Transport: Old and New},
  shorttitle = {Optimal Transport},
  author = {Villani, C{\'e}dric},
  year = 2009,
  series = {Grundlehren Der Mathematischen {{Wissenschaften}}},
  number = {338},
  publisher = {Springer},
  address = {Berlin Heidelberg},
  isbn = {978-3-540-71049-3 978-3-662-50180-1},
  langid = {english}
}

@article{OReillyTran2021minimax,
  title={Minimax Rates for High-dimensional Random Tessellation Forests},
  author={Eliza O'Reilly and Ngoc Mai Tran},
  journal={Journal of Machine Learning Research},
  volume={25},
  number={},
  pages={1-32},
  year={2024}
}

@book{kallenbergRandomMeasuresTheory2017,
  title = {Random {{Measures}}, {{Theory}} and {{Applications}}},
  author = {Kallenberg, Olav},
  year = 2017,
  series = {Probability {{Theory}} and {{Stochastic Modelling}}},
  volume = {77},
  publisher = {Springer International Publishing},
  address = {Cham},
  doi = {10.1007/978-3-319-41598-7},
  urldate = {2019-12-04},
  isbn = {978-3-319-41596-3 978-3-319-41598-7},
  langid = {english}
}

\end{document}